\theoremstyle{plain}
\newtheorem{theorem}{Theorem}[section]
\newtheorem{lemma}[theorem]{Lemma}
\newtheorem{definition}[theorem]{Definition}
\newtheorem{proposition}[theorem]{Proposition}
\newtheorem{cor}[theorem]{Corollary}
\newtheorem{remark}[theorem]{Remark}
\newtheorem{example}[theorem]{Example}
\newtheorem{convention}{Convention}
\numberwithin{equation}{section}
\newcommand{\ra}{\rightarrow}
\newcommand{\Z}{\mathbb{Z}}
\newcommand{\U}{\mathbf{U}}
\newcommand{\sheaf}[1]{\mathcal{#1}}
\newcommand{\id}{\mathbf{1}}
\newcommand{\Hom}[3][]{\mathrm{Hom}_{#1}(#2, #3)}
\newcommand{\Sch}{\mathbf{Sch}}
\newcommand{\Set}{\mathbf{Set}}
\newcommand{\M}{\mathfrak{M}}
\newcommand{\Ms}{\mathcal{M}}
\newcommand{\dv}{\textbf{d}}
\newcommand{\QAl}{\ensuremath{{\mathbf{A_\ell} \times Q}}}
\newcommand{\catQAl}{\ensuremath{\mathcal{M}(\QAl)}}
\newcommand{\catchain}{\ensuremath{\mathcal{M}_{\mathrm{rel}}(\QAl)}}
\newcommand{\catfilt}{\ensuremath{\mathcal{M}_{\mathrm{fil}}(\QAl)}}
\newcommand{\mf}{\mathfrak{M}^\mathrm{ss}}
\newcommand{\mfc}{\mathfrak{M}_{\mathrm{rel}}^\mathrm{ss}}
\newcommand{\mff}{\mathfrak{M}_{\mathrm{fil}}^\mathrm{ss}}
\newcommand{\mtf}{\mathfrak{M}^\mathrm{s}}
\newcommand{\mtfc}{\mathfrak{M}_{\mathrm{rel}}^\mathrm{s}}
\newcommand{\mtff}{\mathfrak{M}_{\mathrm{fil}}^\mathrm{s}}
\newcommand{\ms}{M^\mathrm{ss}}
\newcommand{\msc}{M^\mathrm{ss}_{\mathrm{rel}}}
\newcommand{\msf}{M^\mathrm{ss}_{\mathrm{fil}}}
\newcommand{\mst}{M^\mathrm{s}}
\newcommand{\mstc}{M^\mathrm{s}_{\mathrm{rel}}}
\newcommand{\mstf}{M^\mathrm{s}_{\mathrm{fil}}}
\newcommand{\rs}{\mathcal{R}}
\newcommand{\rsc}{\mathcal{R}_{\mathrm{rel}}}
\newcommand{\rsf}{\mathcal{R}_{\mathrm{fil}}}
\newcommand{\fp}[1]{\underline{#1}}%functor of poits
\newcommand{\dimv}[1]{\ensuremath{\mathbf{#1}}}
\begin{document}
\baselineskip=15.5pt

\title{Moduli of filtered quiver representations}
 \author{Sanjay~Amrutiya}
 \address{Department of Mathematics, IIT Gandhinagar,
 Near Village Palaj, Gandhinagar - 382355, India}
 \email{samrutiya@iitgn.ac.in}
 \author{Umesh~V.~Dubey}
 \address{ Harish-Chandra Research Institute, HBNI,
Chhatnag Road, Jhunsi, Allahabad - 211019, India}	
 \email{umeshdubey@hri.res.in}
 \subjclass[2000]{Primary: 14D20, 16G20}
 \keywords{Moduli spaces, Filtered objects, Representations of quivers}
\date{}

%---------------------------------- Main Document  --------------------------------------------------%
\begin{abstract}
\noindent In this paper, we give a construction of the moduli space of filtered 
representations of a given quiver of fixed dimension vector with the appropriate 
notion of stability. The construction of the moduli of filtered representations uses 
the moduli of representations of ladder quiver. The ladder quiver is introduced using 
a given quiver and an $A_n$-type quiver. We also study determinantal
theta functions on such moduli spaces.

\end{abstract}
%------------------------------------------------------------------------------------------------------------%
\maketitle
\section{Introduction}
In \cite{Ki94}, A. King has used geometric invariant theory to construct moduli of 
quiver representations.  Since the category of filtered representations may not be an 
abelian category, we cannot apply the methods of A.~King \cite{Ki94}  directly for 
the construction. 
Once we suitably choose the stability parameter, we get a quasi-abelian category of 
semistable representations having a fixed slope.
We generalize the approach of \cite{Ki94} in the filtered representation category 
setup via the concept of slope stability studied by Andr\'e \cite{An09} for 
quasi-abelian categories.

The moduli construction in this note will be used in the forthcoming work on the 
functorial moduli construction of parabolic sheaves. Besides this, it is 
interesting to investigate some geometric properties of moduli of filtered quiver 
representations, in general.

We first review some basic properties of filtered representations of a quiver $Q$
in Section \ref{sec-2}. We introduce the ladder quiver $\QAl$ (see Section 
\ref{ladder_quiver}), an admissible ideal in the path algebra of a ladder quiver. 
We describe the category of filtered representations of a quiver $Q$ in terms of 
representations of the ladder quiver with relations.

We study the slope functions introduced by Andr\'e \cite{An09} in Section \ref{sec-rank-fn}.
Using the concept of a maximal flag, we generalize the definition of $S$-equivalence 
for filtered representations. We also give some examples of slope functions.

In Section \ref{sec-git-mc}, we construct the moduli space of filtered quiver 
representations (see Theorem \ref{main-theorem}). We prove that there is a canonical 
morphism from this moduli space to the moduli space of representations of a ladder 
quiver. For an appropriate choice of slope function, we show that this canonical 
morphism is an open immersion (see Corollary \ref{cor-open-immersion}).

In the final Section \ref{sec-det-fns}, we discuss a characterisation of semistability of filtered 
representations in terms of determinantal theta functions using the results of 
\cite{DW2}. We also study the corresponding morphism on such moduli spaces into a 
projective space.

\noindent \textbf{Notations:}
Let $\mathbb{K}$ be an algebraically closed field.
\begin{itemize}
\item $A$\text{-mod} = the category of finitely generated (left) modules over an algebra $A$.
\item $\catQAl$ = the category of ($\mathbb{K}$-linear) representation of $\QAl$.
\item $\catchain$ = the category of ($\mathbb{K}$-linear) representation of $\QAl/I$,  
i.e., the category of 
representation with the admissible ideal of relations $I$ defined in \ref{ladder_quiver}.
\item $\catfilt$ = the category of ($\mathbb{K}$-linear) filtered representation of $Q$ 
of length at most $\ell$.
\item $\Sch$ = the category of all $\mathbb{K}$-schemes.
\end{itemize}

\section{Filtered quiver representations}\label{sec-2}

Let $Q = (Q_0, Q_1, s, t)$ be a (finite) quiver without oriented cycles, and let 
$\mathbb{K}Q$ be its path algebra which is a finite dimensional $k$-algebra. 
We will some time ignore the maps $s$ and $t$ when it is clear from the context, 
for example, in the case of linear quiver $\mathbf A_\ell$ the vertex set is 
$\{ 1, \ldots, \ell \}$ with the set of arrows $\{ 1, \ldots, \ell-1 \}$ and the 
map $s$ (respectively, $t$) is the canonical inclusion (respectively, the shift 
$j \mapsto j +1$). The abelian category $\mathbb{K}Q$-mod is equivalent to the 
category of $\mathbb{K}$-linear representations of $Q$, see \cite{Re08} for more 
details. We will also use $M(v)\,, v \in Q_0,$ (respectively, $M(a); a \in Q_1$) 
to denote a vector space (respectively, a linear map) of a representation 
corresponding to the $\mathbb{K}Q$-module $M$ at the vertex $v$ (respectively, 
the arrow $a$) under this equivalence.
 
\begin{definition} \label{ladder_quiver}\rm{
If $Q= (Q_0, Q_1, s, t)$ is a quiver and $\mathbf A_\ell = 
(\mathbf A_{\ell 0}, \mathbf A_{\ell 1})$ is a linear quiver with $\ell$ vertices. 
Then the ladder quiver is defined as 
$$
\QAl := ({\mathbf A_{\ell 0}} \times Q_0, {\mathbf A_{\ell 0 }} \times Q_1 
\sqcup {\mathbf A_{\ell 1}} \times Q_0, s_\ell , t_\ell ).
$$
To simplify the notation, let $\alpha_i^a := (i , a) \in 
{\mathbf A_{\ell 0}} \times Q_1$ for $i = 1, \ldots, \ell$ 
(or $i \in \mathbf A_{\ell 0}$)  and $\beta_j^v := (j , v) \in 
{\mathbf A_{\ell 1}} \times Q_0$ for $j=1, \ldots, (\ell-1)$ 
(or $j \in \mathbf A_{\ell 1}$) represents the arrows in the ladder quiver $\QAl$. 
Then the source map for ladder quiver is defined as 
$s_\ell  (\alpha_i^a) = (i, s(a)), s_\ell (\beta_j^v) = (j, v)$ and the target map 
is defined as $t_\ell (\alpha_i^a) = (i , t(a))$ and $ t_\ell  (\beta_j^v) = (j +1 , v) $.

Now using this notation for arrows, we can define the admissible ideal $I$ as the 
ideal generated by 
$ \beta_k^{t(a)} \alpha_k^a - \alpha_{k +1}^a \beta_k^{s(a)}$, where $a\in Q_1$ and 
$k = 1, \ldots, (\ell - 1)$ (or $k \in \mathbf A_{\ell 1}$).
}
\end{definition}

We shall denote by $\mathbb{K}(\QAl)$ the path-algebra of the quiver $\QAl$.

\begin{remark}\rm{
The definition of ladder quiver is a special case of more general notion of tensor 
product of two quivers defined in \cite[Section 3]{Ke13}
\footnote{We are grateful to B. Keller for pointing out this remark after the first 
version of this manuscript was put up on Arxiv.}. Moreover, the tensor product of 
path-algebras $\mathbb{K}\mathbf{A}_\ell\otimes_{\mathbb{K}} \mathbb{K}Q$ is isomorphic 
to the algebra $\mathbb{K}(\QAl)/I$. 
}
\end{remark}
\begin{convention} \label{sink_identification}\rm{
We will identify the vertex set $Q_0$ embedded in the vertex set of ladder quiver as 
$ \{\ell\} \times Q_0$, where the vertex $\ell$ is the sink of a linear quiver.
}
\end{convention}

We will now give some examples of ladder quivers of a given quiver.

\begin{example} \rm{
The quiver $\mathbf{A}_\ell \times \mathbf{A}_3$ can be described as follows:
$$
\xymatrix{
 \bullet \ar[d]_{\alpha_1^1} \ar[rr]^{\beta_1^1}  && \bullet \ar[d]_{\alpha_2^1} & \cdots & \bullet 
 \ar[d]_{\alpha_{\ell-1}^1} \ar[rr]^{\beta_{\ell-1}^1}  && \bullet \ar[d]_{\alpha_\ell^1} \\
 \bullet \ar[rr]_{\beta_1^2} \ar[d]_{\alpha_1^2} && \bullet \ar[d]_{\alpha_2^2} & \cdots &  \bullet 
 \ar[rr]_{\beta_{\ell-1}^2} \ar[d]_{\alpha_{\ell-1}^2} && \bullet \ar[d]_{\alpha_\ell^2} \\
  \bullet \ar[rr]_{\beta_1^3}  && \bullet  & \cdots  &  \bullet \ar[rr]_{\beta_{\ell-1}^3}  && \bullet 
}.
$$
}
\end{example}

\begin{example} \rm{
Let $Q = (Q_0, Q_1, s, t)$ be a quiver defined as follows:
The set vertices
$
Q_0 = \{ v_1, v_2, v_3, v_4 \}
$
and the set of arrows 
$
Q_1 = \{ a, b, c, d \}
$
with the source and target maps $s, t\colon Q_1 \ra Q_0$ given by 
\[
\begin{array}{cc}
s(a) = v_1, & t(a) = v_2 \\
s(b) = v_1 & t(b) = v_3 \\
s(c) = v_2 & t(c) = v_4 \\
s(d) = v_3 & t(d) = v_4
\end{array}
\]
By re-labeling the vertex set of the ladder quiver $\mathbf{A}_3 \times Q$ as 
$$
(\mathbf{A}_3 \times Q)_0 = \{v_1^1,  v_1^2, v_1^3, v_1^4, v_2^1,  v_2^2, v_3^3, v_4^4, v_3^1,  v_3^2, v_3^3, v_3^4, 
v_4^1,  v_4^2, v_4^3, v_4^4\},
$$
we can described it as follows:
\[
\xymatrix@C=2.0cm{
& v_1^1 \ar[rr]^{\beta_1^{s(a)}} \ar[dl]_{\alpha_1^a} \ar[dd]^(.2){\alpha_1^b}|\hole 
&& v_1^2 \ar[rr]^{\beta_2^{s(a)}} \ar[dd]_(.7){\alpha_2^b}|\hole \ar[dl]_{\alpha_2^a} 
&& v_1^3 \ar[dd]^{\alpha_3^b} \ar[dl]_{\alpha_3^a} \\
v_2^1 \ar[rr]^(.75){\beta_1^{t(a)} = \beta_1^{s(c)}} \ar[dd]_{\alpha_1^c} 
&& v_2^2 \ar[dd]^(.25){\alpha_2^c} \ar[rr]^(.75){\beta_2^{t(a)} = \beta_1^{s(c)}} 
&& v_2^3 \ar[dd]^(0.25){\alpha_3^c} \\
& v_3^1 \ar[rr]^(0.25){\beta_1^{t(b)} = \beta_1^{s(d)}}|\hole \ar[dl]_{\alpha_1^d} 
&& v_3^2 \ar[rr]^(0.25){\beta_2^{t(b)} = \beta_2^{s(d)}}|\hole \ar[dl]_{\alpha_2^d} 
&& v_3^3 \ar[dl]^{\alpha_3^d} \\
v_4^1 \ar[rr]^{\beta_1^{t(d)} = \beta_1^{t(c)}} 
&& v_4^2 \ar[rr]^{\beta_2^{t(d)} = \beta_2^{t(c)}} 
&& v_4^3 
}
\]
}
\end{example}

The filtered objects of an abelian category has been studied in \cite{S99, SS16}. 
We will focus on category of quiver representations. We will also relate filtered 
objects with representations of a ladder quiver.

\begin{definition}\rm{
A filtered representation of $Q$ of length at most $\ell$ is an increasing filtration 
of length $\ell$ in the abelian category of representations of the quiver $Q$ over 
$\mathbb{K}$. We will denote by $M$ the filtered representation, where $M_k$ is a 
representation of $Q$ such that $M_{k - 1} \subseteq M_k$ is a subrepresentation, 
for $k = 2, \ldots , \ell$ .
}
\end{definition}

\begin{remark}\rm{
We can realize the category of representations of quiver $Q$ inside the category of 
filtered representations by taking $M_k = 0$ for $k \leq (\ell - 1)$ (cf. Convention \ref{sink_identification}). The filtered representation can be related to $\Z$-filtered 
representation of Schneiders \cite[Definition 3.1.1]{S99}, by putting $M_k = M_\ell$ 
for $k \geq (\ell + 1)$ and $M_k = 0$ for $k \leq 0$.
}
\end{remark}
 
Next, we shall describe the category of filtered quiver representations as certain 
functor category. This is also useful in getting the structure of quasi-abelian 
category on the category of filtered quiver representations of a given quiver.

Let $\Lambda = \{1, 2, \dots, \ell\}$ be a pre-ordered set. We denote by 
$\mathrm{Fct}(\Lambda, \mathbb{K}Q\text{-mod})$ the category of functors from 
$\Lambda$ to $\mathbb{K}Q\text{-mod}$ \cite{SS16}. We can immediately see the 
following equivalence of categories.

\begin{lemma}\label{lemma-1}
The categories $\mathrm{Fct}(\Lambda, \mathbb{K}Q\text{-mod})$ and $\catchain$ 
are equivalent, where $\catchain$ is the category of representations of $\QAl$ with 
an admissible ideal of relations $I$ \rm{(see Definition \ref{ladder_quiver})}.
\end{lemma}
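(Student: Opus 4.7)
The proof is essentially the observation that the relations $I$ in the ladder quiver encode exactly the naturality condition for morphisms of $\mathbb{K}Q$-modules, so I would carry out an explicit construction of functors in both directions and verify they are mutually quasi-inverse.

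The plan is as follows. First, I would construct a functor $\Phi \colon \mathrm{Fct}(\Lambda, \mathbb{K}Q\text{-mod}) \to \catchain$. Given a functor $F \colon \Lambda \to \mathbb{K}Q\text{-mod}$, unwinding the equivalence $\mathbb{K}Q\text{-mod}\simeq \mathrm{Rep}_\mathbb{K}(Q)$, each $F(i)$ is a tuple of vector spaces $F(i)(v)$ (for $v\in Q_0$) together with linear maps $F(i)(a)\colon F(i)(s(a))\to F(i)(t(a))$ (for $a\in Q_1$), and each arrow $i\leq i{+}1$ of $\Lambda$ yields a $\mathbb{K}Q$-module homomorphism, i.e.\ a family of linear maps $F(i\leq i{+}1)(v)\colon F(i)(v)\to F(i{+}1)(v)$ commuting with the arrow maps of $Q$. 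I set
\[
\Phi(F)_{(i,v)} := F(i)(v),\qquad \Phi(F)_{\alpha_i^a} := F(i)(a),\qquad \Phi(F)_{\beta_i^v} := F(i\leq i{+}1)(v).
\]
The fact that $F(i\leq i{+}1)$ is a morphism of $\mathbb{K}Q$-modules is precisely the identity $\Phi(F)_{\beta_k^{t(a)}}\circ \Phi(F)_{\alpha_k^a} = \Phi(F)_{\alpha_{k+1}^a}\circ \Phi(F)_{\beta_k^{s(a)}}$, so $\Phi(F)$ satisfies $I$. On morphisms, a natural transformation $\eta\colon F\Rightarrow G$ sends $i\in\Lambda$ to a $\mathbb{K}Q$-module map $\eta_i$, whose components $\eta_i(v)$ assemble to a morphism in $\catchain$; naturality of $\eta$ over $i\leq i{+}1$ together with the $\mathbb{K}Q$-linearity of each $\eta_i$ give compatibility with the $\alpha$- and $\beta$-arrows respectively.

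Next, I would construct the inverse functor $\Psi \colon \catchain \to \mathrm{Fct}(\Lambda, \mathbb{K}Q\text{-mod})$. Given $V\in\catchain$, set $\Psi(V)(i)$ to be the representation of $Q$ with vertex spaces $V_{(i,v)}$ and arrow maps $V_{\alpha_i^a}$. For $i\leq j$ in $\Lambda$, define $\Psi(V)(i\leq j)(v) := V_{\beta_{j-1}^v}\circ\cdots\circ V_{\beta_i^v}$; the relations $I$ ensure that for each single step $i\leq i{+}1$ the collection $\{V_{\beta_i^v}\}_{v\in Q_0}$ is a $\mathbb{K}Q$-module homomorphism, and composition gives functoriality on $\Lambda$. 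Morphisms of $\catchain$ are sent to natural transformations component-wise in the obvious way.

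Finally, I would verify $\Psi\circ\Phi = \mathrm{id}$ and $\Phi\circ\Psi = \mathrm{id}$ on the nose (or at least exhibit canonical natural isomorphisms), which is immediate from the definitions. The only subtle point—and the place where the relations $I$ are indispensable—is the equivalence between the condition ``$F(i\leq i{+}1)$ is a morphism in $\mathbb{K}Q\text{-mod}$'' and the commutativity of the ladder squares; everything else is bookkeeping. There is no real obstacle here, but one should be careful that $\Lambda$ is treated as a (thin) category generated by the covering relations $i\leq i{+}1$, so that checking morphisms only on these generators suffices—this reduction is what makes the functoriality condition match the finitely many generating relations of $I$.
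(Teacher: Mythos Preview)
Your proposal is correct and spells out exactly the verification the paper leaves implicit (the paper states the lemma without proof, prefacing it with ``we can immediately see the following equivalence''). One minor slip: in your treatment of morphisms the attributions are swapped---$\mathbb{K}Q$-linearity of each $\eta_i$ yields compatibility with the $\alpha$-arrows (the $Q$-arrows at each level), while naturality of $\eta$ over $i\leq i{+}1$ yields compatibility with the $\beta$-arrows (the transition maps).
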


Let $\mathrm{F}_\Lambda(\mathbb{K}Q\text{-mod})$ be the full subcategory of 
$\mathrm{Fct}(\Lambda, \mathbb{K}Q\text{-mod})$ consisting of filtered objects. 
Then, the natural inclusion functor 
\begin{equation}\label{eq-iota}
\iota\colon \mathrm{F}_\Lambda(\mathbb{K}Q\text{-mod}) \ra 
\mathrm{Fct}(\Lambda, \mathbb{K}Q\text{-mod})
\end{equation}
is an embedding. Moreover, the inclusion functor $\iota$ has a left adjoint
\begin{equation}\label{eq-kappa}
\kappa \colon \catchain  \ra \catfilt
\end{equation}
with $\kappa \circ \iota \simeq \id_{\mathrm{F}_\Lambda(\mathbb{K}Q\text{-mod})}$
(see \cite[Proposition 3.5]{SS16}). We shall use this adjunction in Section 
\ref{sec-det-fns}. We shall also need following basic properties of the category of 
filtered objects.

\begin{proposition}\label{prop: stable kernels}
The subcategory $\mathrm{F}_\Lambda(\mathbb{K}Q\text{-mod})$ of 
$\mathrm{Fct}(\Lambda, \mathbb{K}Q\text{-mod})$ is stable by sub-objects. In particular, 
the category  $\mathrm{F}_\Lambda(\mathbb{K}Q\text{-mod})$ admits kernels and the 
functor $\iota$ commutes with kernels. Moreover, the category 
$\mathrm{F}_\Lambda(\mathbb{K}Q\text{-mod})$ is a quasi-abelian category
\end{proposition}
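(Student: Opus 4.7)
The plan is to verify the three assertions in sequence, exploiting that $\mathrm{Fct}(\Lambda, \mathbb{K}Q\text{-mod})$ is abelian (with pointwise limits and colimits, since $\mathbb{K}Q\text{-mod}$ is) and that $\iota$ admits the left adjoint $\kappa$ recalled in \eqref{eq-iota}--\eqref{eq-kappa}.

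First I would establish stability under sub-objects. Take a monomorphism $N \hookrightarrow \iota(M)$ in $\mathrm{Fct}(\Lambda, \mathbb{K}Q\text{-mod})$ with $M \in \mathrm{F}_\Lambda(\mathbb{K}Q\text{-mod})$. Pointwise, $N$ corresponds to sub-representations $N_k \subseteq M_k$ whose structure maps are restrictions of those of $M$. For each $k$, the composite $N_k \hookrightarrow M_k \to M_{k+1}$ factors as $N_k \to N_{k+1} \hookrightarrow M_{k+1}$ by functoriality; this composite is monic (the first factor because $N \hookrightarrow \iota(M)$ is monic, the second because $M$ is filtered), and consequently the factor $N_k \to N_{k+1}$ is monic as well. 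Thus $N$ lies in $\mathrm{F}_\Lambda(\mathbb{K}Q\text{-mod})$. The kernel statement follows immediately: the pointwise kernel of any $f\colon M \to M'$ in $\mathrm{F}_\Lambda(\mathbb{K}Q\text{-mod})$ is a sub-object of $\iota(M)$, hence lies in $\mathrm{F}_\Lambda(\mathbb{K}Q\text{-mod})$; since $\iota$ is fully faithful, this object is also the kernel of $f$ in $\mathrm{F}_\Lambda(\mathbb{K}Q\text{-mod})$, so $\iota$ commutes with kernels by construction.

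For the quasi-abelian property, I would first produce cokernels via the reflector: given $f\colon M \to M'$, let $C$ be the cokernel of $\iota(f)$ in $\mathrm{Fct}(\Lambda, \mathbb{K}Q\text{-mod})$; then $\kappa(C)$ represents the cokernel of $f$ in $\mathrm{F}_\Lambda(\mathbb{K}Q\text{-mod})$ thanks to the adjunction $\kappa \dashv \iota$ together with $\kappa \circ \iota \simeq \id$. It then remains to verify that strict monomorphisms are stable under pushout and strict epimorphisms under pullback. Since kernels are pointwise, strict monos in $\mathrm{F}_\Lambda(\mathbb{K}Q\text{-mod})$ coincide with pointwise monos between filtered objects; a pushout of such a strict mono in $\mathrm{F}_\Lambda(\mathbb{K}Q\text{-mod})$ is obtained by forming the pushout pointwise in $\mathrm{Fct}(\Lambda, \mathbb{K}Q\text{-mod})$ and then applying $\kappa$, and the stability of monomorphisms under pushout in the abelian category $\mathrm{Fct}(\Lambda, \mathbb{K}Q\text{-mod})$ together with the colimit-preservation of the left adjoint $\kappa$ transfers the required stability to $\mathrm{F}_\Lambda(\mathbb{K}Q\text{-mod})$. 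Dually, pullbacks in $\mathrm{F}_\Lambda(\mathbb{K}Q\text{-mod})$ agree with those in $\mathrm{Fct}(\Lambda, \mathbb{K}Q\text{-mod})$ (as $\iota$ is a right adjoint and so preserves limits), from which the stability of strict epis follows.

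The subtlest step is the cokernel/pushout analysis, where the interaction between $\kappa$ and colimits has to be tracked carefully and one must check that the cokernel factorisation inside $\mathrm{F}_\Lambda(\mathbb{K}Q\text{-mod})$ behaves well under base change. In the write-up I would short-circuit this by appealing to the general principle --- recorded in \cite{SS16} --- that a reflective subcategory of an abelian category which is stable by sub-objects is automatically quasi-abelian.
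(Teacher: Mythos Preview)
Your explicit verification of stability under sub-objects and the kernel statement is correct and more detailed than the paper, which simply refers the reader to \cite[Proposition~3.3]{SS16} and \cite[Proposition~3.1.17]{S99} for the entire proposition. Your fallback plan---to invoke the general principle from \cite{SS16} that a reflective subcategory stable under sub-objects is quasi-abelian---is therefore exactly what the paper does.

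The direct argument you sketch for the quasi-abelian part, however, has a genuine error. You assert that strict monomorphisms in $\mathrm{F}_\Lambda(\mathbb{K}Q\text{-mod})$ coincide with pointwise monomorphisms between filtered objects. But every monomorphism in the filtered category is already pointwise monic (since $\iota$ preserves kernels), so your claim would force every mono to be strict; the paper itself remarks, just after this proposition, that there exist monics in $\catfilt$ which are not strict. Concretely, with $\Lambda=\{1,2\}$ and $Q$ a single vertex, the inclusion $(0\hookrightarrow\mathbb{K})\hookrightarrow(\mathbb{K}=\mathbb{K})$ is monic but has zero cokernel in the filtered category, hence is not the kernel of its cokernel. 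The pushout step is also not justified as written: $\kappa$ is only right exact and need not preserve monomorphisms, so ``colimit-preservation of $\kappa$'' does not transfer the mono property back to the filtered category. A workable direct argument would instead characterise strict monos as those $A\hookrightarrow B$ for which the pointwise quotient $\iota(B)/\iota(A)$ is again filtered, and check that this condition is stable under pushout. Since you already intended to short-circuit via \cite{SS16}, the final write-up is fine; just drop the incorrect intermediate sketch.
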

\begin{proof}
See \cite[Proposition 3.3]{SS16} or \cite[Proposition 3.1.17]{S99}.
\end{proof}

Under the equivalence of Lemma \ref{lemma-1}, to give an object $M$ of
$\mathrm{F}_\Lambda(\mathbb{K}Q\text{-mod})$ 
is equivalent to give a representation $M$ of $\QAl$ such that $M(r) = 0$ for all 
$r\in I$ and $M(a)$ is injective for any arrow in the copies of $\mathbf{A}_\ell$ 
in $\QAl$. We shall denote by $\catfilt$ the full subcategory of $\catchain$ 
consisting of filtered quiver representations of $Q$ of length at most $\ell$. 
We shall denote by $\iota : \catfilt \to \catchain$ the above embedding. 
We get the following corollary of Proposition \ref{prop: stable kernels}.

\begin{cor}
The category $\catfilt$ is a quasi-abelian category. In fact, the category $\catchain$ 
can be identified with the (left) abelian envelope of $\catfilt$ via the functor $\iota$.
\end{cor}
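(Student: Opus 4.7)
The plan is to reduce everything to Proposition~\ref{prop: stable kernels} and the equivalence of Lemma~\ref{lemma-1}. Under Lemma~\ref{lemma-1}, the full subcategory $\mathrm{F}_\Lambda(\mathbb{K}Q\text{-mod})$ corresponds precisely to $\catfilt$, since an object in the functor category corresponds to a representation of $\QAl/I$ with injective $\beta$-maps if and only if the underlying filtration is honest (this is exactly the paragraph preceding the corollary). Because being quasi-abelian is invariant under equivalence of categories, Proposition~\ref{prop: stable kernels} transports immediately to give the first assertion.

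For the second assertion, I would verify the characterizing universal property of the (left) abelian envelope in the sense of Schneiders \cite{S99}: $\catchain$ is abelian, the inclusion $\iota$ is fully faithful and strictly exact, and every object of $\catchain$ admits a two-term resolution by objects in the essential image of $\iota$. That $\catchain$ is abelian follows from its description as the module category over the finite-dimensional algebra $\mathbb{K}\QAl / I$. Fullness and faithfulness of $\iota$ are automatic from being a full subcategory embedding, and strict exactness is a direct consequence of Proposition~\ref{prop: stable kernels} (stability under subobjects together with the fact that $\iota$ commutes with kernels).

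The step I expect to be the main obstacle is producing the two-term resolution. A concrete construction goes as follows: for each $M \in \catchain$, define $F_0 \in \catfilt$ by setting
\[
F_0(k, v) \;=\; \bigoplus_{j \le k} M(j, v),
\]
with $\beta$-maps the canonical inclusions (injective by construction, so $F_0$ is indeed filtered) and $\alpha$-maps induced componentwise from those of $M$ (one checks the relations in $I$ are respected summand by summand). The natural map $F_0 \twoheadrightarrow M$, defined at $(k, v)$ by $(x_j)_{j \le k} \mapsto \sum_j M(\beta_{k-1}^v \circ \cdots \circ \beta_j^v)(x_j)$, is a surjection in $\catchain$ (it already hits each summand through $j = k$), and its kernel $F_1$ lies automatically in $\catfilt$ by stability under subobjects from Proposition~\ref{prop: stable kernels}. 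This yields the desired exact sequence $0 \to F_1 \to F_0 \to M \to 0$.

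Alternatively, and more economically, one can simply invoke the general result of \cite[\S 3]{SS16}, where the left abelian envelope of the quasi-abelian category of filtered objects is identified with the ambient functor category; the adjunction $\kappa \dashv \iota$ of \eqref{eq-kappa}, together with $\kappa \circ \iota \simeq \id$, is precisely the shadow of this universal property, and transporting through Lemma~\ref{lemma-1} finishes the argument.
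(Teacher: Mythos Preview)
Your proposal is correct, and your ``alternative, more economical'' route---simply invoking the general results of \cite{SS16} and transporting via Lemma~\ref{lemma-1}---is exactly what the paper does: its proof consists of a bare citation to \cite[Theorem~3.9]{SS16} (or \cite[Proposition~3.1.17]{S99}) for the quasi-abelian structure and to \cite[Corollary~3.1.29]{S99} for the abelian-envelope identification.

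Your longer route, verifying the envelope criterion by hand, is also fine and in fact anticipates material the paper develops separately: the explicit two-term resolution $0 \to F_1 \to F_0 \to M \to 0$ with $F_0(k,v) = \bigoplus_{j \le k} M(j,v)$ is precisely the construction the paper records later as Lemma~\ref{tf resolution}. So you have effectively folded that lemma into the proof of the corollary, whereas the paper defers it and keeps the corollary as a pure citation. One small imprecision: what you call ``the characterizing universal property'' is really the cotilting-torsion-pair criterion of \cite[\S1.2 and Corollary~3.1.29]{S99} rather than a universal property per se, but the conditions you list (abelian ambient category, full embedding closed under subobjects, every object resolvable by filtered ones) are the right ones for that criterion.
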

\begin{proof}
The proof follows from \cite[Theorem 3.9]{SS16} (see also \cite[Proposition 3.1.17]{S99}) 
for the first part, and from \cite[Corollary 3.1.29]{S99} for the second part. 
\end{proof}

By \cite[Proposition 1.2.14]{An09}, there exists a full strict subcategory $\mathcal{T}$ 
of $\catchain$ such that for every object $B\in \catchain$, there exists 
$B_{\mathrm{tor}} \in \mathcal{T}$ and $M \in \catfilt$ and a short exact sequence
\begin{equation} \label{cotilting torsion extension}
0\ra B_{\mathrm{tor}}\ra B\ra M\ra 0 \;.
\end{equation}

Moreover, if $\mathrm{rk}(B) = 0$, then $B \in \mathcal{T}$ using \cite[Remark 1.2.15]{An09}.

\begin{lemma}\label{tf resolution}
There is a short exact sequence for any object $B$ of $\catchain$,
\[
 0 \to N' \to N \to B \to 0
\]
where $N$ and $N'$ are filtered representations.
\end{lemma}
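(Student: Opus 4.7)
The plan is to construct $N$ explicitly as a kind of free filtered representation built from the vertex data of $B$, together with a natural surjection $\pi \colon N \to B$ in $\catchain$. The kernel $N'$ will then automatically be filtered, since any sub-representation of a filtered representation still has injective vertical arrows.

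Concretely, for each $(k, v) \in \mathbf{A}_{\ell 0} \times Q_0$ I would set
\[
N(k, v) \;:=\; \bigoplus_{j=1}^{k} B(j, v),
\]
take $N(\beta_k^v)$ to be the inclusion of $\bigoplus_{j=1}^{k} B(j,v)$ into $\bigoplus_{j=1}^{k+1} B(j,v)$ as the first $k$ summands, and take $N(\alpha_k^a) := \bigoplus_{j=1}^{k} B(\alpha_j^a)$. A direct componentwise computation shows that $N$ satisfies the relations of $I$: both sides of $\beta_k^{t(a)}\alpha_k^a - \alpha_{k+1}^a \beta_k^{s(a)}$ act by $B(\alpha_j^a)$ on the $j$-th summand for $j\le k$ and contribute $0$ in the newly added $(k+1)$-th summand. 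Thus $N \in \catchain$, and since the vertical arrows are split injective by construction, in fact $N \in \catfilt$.

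Next I would define $\pi \colon N \to B$ at each $(k,v)$ by
\[
\pi_{k,v} \colon \bigoplus_{j=1}^{k} B(j, v) \longrightarrow B(k, v), \qquad (b_j)_{j=1}^k \longmapsto \sum_{j=1}^{k} B\bigl(\beta_{k-1}^v \cdots \beta_j^v\bigr)(b_j),
\]
with the convention that the $j=k$ summand is sent by the identity; this already makes $\pi_{k,v}$ surjective. Compatibility with the vertical arrows of $N$ and $B$ is immediate, and compatibility with the horizontal arrows follows by iterating the relation $\beta_k^{t(a)} \alpha_k^a = \alpha_{k+1}^a \beta_k^{s(a)}$ holding in $B$ itself. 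Setting $N' := \ker(\pi)$ (computed in $\catchain$), the vertical arrows of $N'$ are restrictions of the split injections of $N$ and hence remain injective, so $N' \in \catfilt$, and $0 \to N' \to N \to B \to 0$ is the desired short exact sequence.

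The main obstacle is not conceptual but combinatorial: one needs careful indexwise bookkeeping to verify that the admissible relations of $I$ hold in $N$ and that $\pi$ intertwines the horizontal arrows via the iterated commutativity relation in $B$. Once these routine verifications are dispatched, the rest of the argument is purely formal.
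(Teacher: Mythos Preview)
Your construction is correct and is essentially identical to the paper's: both define $N(k,v)=\bigoplus_{j=1}^{k}B(j,v)$ with the canonical inclusions along the $\mathbf{A}_\ell$-arrows and the block-diagonal $B(\alpha_j^a)$ along the $Q$-arrows, and both take $\pi$ to be the sum of the composite $\beta$-maps with $N'=\ker\pi$. The only cosmetic differences are that the paper draws the $\beta$-arrows horizontally (so your ``vertical''/``horizontal'' labels are swapped relative to the paper's usage) and that the paper additionally records an explicit generating set for $N'(k,v)$, which you omit but do not need for the stated lemma.
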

\begin{proof}
To get the epimorphism from filtered representation $f : N \to B$, define
$$
N(k , v) := \bigoplus_{j = 1}^k B(j , v)
$$
and, vertical maps are defined as in $B$ and horizontal maps are the canonical inclusions. 
The epimorphism $f_{(k , v)}$ is defined using sum of maps
$B(j , v) \to B(k , v)$ defined by 
$$
B(\beta^v_{k-1}) \circ \cdots \circ B(\beta^v_j)\,,
$$
whenever $j < k$ and for $j=k$ take identity map on that component.
Now, we can check that this gives a surjection from $N$ to $B$. Take $N' := \ker f$. 
Then, $N'$ is again a filtered representation. We can check that $N'(1 , v) = 0$ and 
for $k >1$, $N'(k , v)$ is a vector subspace generated by
$$
(0, \ldots, 0, -b_j, 0, \ldots, 0, [B(\beta^v_{k-1}) \circ \cdots \circ B(\beta^v_j)] (b_j) )
$$ 
for $j = 1, \ldots, k-1$ and $b_j \in B_j$.
\end{proof}

We can now describe the category $\mathcal{T}$ explicitly using the Convention 
\ref{sink_identification}.

\begin{proposition}
Let $B$ be an object of $\catchain$.
The representation $B$ is in $\mathcal T$ if and only if $B(\ell , v) = 0$ for each 
vertices $(\ell , v)$ of the quiver $\QAl$.
\end{proposition}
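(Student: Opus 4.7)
The plan is to use the explicit left adjoint $\kappa$ from \eqref{eq-kappa} together with the torsion-pair structure of $(\mathcal{T}, \iota(\catfilt))$ in $\catchain$. For $B \in \catchain$, set
\[
\kappa(B)(k,v) := \mathrm{im}\Bigl( B(\beta^v_{\ell-1}) \circ \cdots \circ B(\beta^v_k) : B(k,v) \to B(\ell,v) \Bigr) \subseteq B(\ell,v),
\]
with horizontal maps the canonical inclusions inside $B(\ell,v)$ and vertical maps the restrictions of $B(\alpha^a_\ell)$; iterating the relations $\beta^{t(a)}_k\alpha^a_k = \alpha^a_{k+1}\beta^{s(a)}_k$ from level $k$ up to $\ell$ guarantees that these restrictions land in the correct subspaces and that $\kappa(B)$ respects the ideal $I$, so $\kappa(B) \in \catfilt$. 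The corestrictions $B(k,v) \twoheadrightarrow \kappa(B)(k,v)$ assemble into a natural surjection $\eta_B : B \twoheadrightarrow \iota\kappa(B)$, which I would identify with the adjunction unit for $\kappa \dashv \iota$: given any $f : B \to \iota(M)$ with $M$ filtered, the injectivity of horizontal compositions in $M$ forces $f$ to vanish on $\ker(\eta_B)$, yielding the required unique factorization.

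Crucially, by Convention \ref{sink_identification} the defining formula yields $\kappa(B)(\ell,v) = \mathrm{im}(\mathrm{id}_{B(\ell,v)}) = B(\ell,v)$. The $(\Leftarrow)$ direction is then immediate: if $B(\ell,v) = 0$ for every $v$, apply the Andr\'e sequence \eqref{cotilting torsion extension} to $B$. The filtered quotient $M$ satisfies $M(\ell,v) = 0$ (being a quotient of $B(\ell,v) = 0$), and since $M$ is filtered, $M(k,v) \hookrightarrow M(\ell,v) = 0$, so $M = 0$. Hence $B \cong B_{\mathrm{tor}} \in \mathcal{T}$. For the $(\Rightarrow)$ direction, invoke the torsion-pair vanishing $\Hom[\catchain]{T}{F} = 0$ for $T \in \mathcal{T}$ and $F \in \iota(\catfilt)$, intrinsic to Andr\'e's cotilting construction. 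Applied to the surjective unit $\eta_B : B \to \iota\kappa(B)$ with $B \in \mathcal{T}$, this forces $\eta_B = 0$ and hence $\iota\kappa(B) = 0$; in particular $B(\ell,v) = \kappa(B)(\ell,v) = 0$ for all $v$.

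The principal obstacle is matching the abstract $\mathcal{T}$ from \cite[Prop.~1.2.14]{An09} with the explicit description $\{B : \kappa(B) = 0\}$, which hinges on the vanishing $\Hom[\catchain]{\mathcal{T}}{\iota(\catfilt)} = 0$. Alternatively one can bypass this by combining the $(\Leftarrow)$ direction applied to $\ker(\eta_B)$ (which is torsion since its value at $(\ell,v)$ is $\ker(\mathrm{id}_{B(\ell,v)}) = 0$) with uniqueness of the torsion decomposition \eqref{cotilting torsion extension}, identifying $B_{\mathrm{tor}}$ with $\ker(\eta_B)$ and $M$ with $\iota\kappa(B)$. The remaining checks — that $\kappa(B)$ genuinely lies in $\catfilt$ and that $\eta_B$ realizes the adjunction $\kappa \dashv \iota$ — are routine computations with the relations in $I$.
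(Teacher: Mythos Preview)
Your proof is correct and closely related to the paper's, but organized around the dual object. The paper constructs the torsion subobject $B_{\mathrm{tor}}$ explicitly, with $B_{\mathrm{tor}}(j,v)$ equal to the kernel of the composed horizontal map $B(j,v)\to B(\ell,v)$, and then checks directly that $B/B_{\mathrm{tor}}$ is filtered and that the orthogonality $\Hom{B_{\mathrm{tor}}}{M}=0$ holds; this verifies that the class $\{B : B(\ell,v)=0\}$ satisfies the defining axioms of the cotilting torsion class and hence coincides with $\mathcal{T}$. Your construction of $\kappa(B)$ via images is exactly the quotient side of the same short exact sequence: your $\iota\kappa(B)$ is the paper's $B/B_{\mathrm{tor}}$ by the first isomorphism theorem, and your $\ker(\eta_B)$ is the paper's $B_{\mathrm{tor}}$. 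Where the paper re-verifies the torsion-pair axioms from scratch, you instead leverage the already-available Andr\'e sequence \eqref{cotilting torsion extension} and the adjunction $\kappa\dashv\iota$ from \eqref{eq-kappa}; this makes your $(\Leftarrow)$ direction especially clean, since the filtered quotient $M$ in \eqref{cotilting torsion extension} is forced to vanish once $M(\ell,v)=0$. Both arguments rest on the same explicit computation of the torsion decomposition, simply accessed from opposite ends of the sequence.
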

\begin{proof}
We can see that all objects with $B(\ell , v)=0$ for vertices $(\ell , v)$ satisfies the 
conditions of cotilting torsion pair. That is, there is an epimorphism from filtered 
representation to $B$ and there is a short exact sequence \ref{cotilting torsion extension}, 
as orthogonality $\Hom{B_{\mathrm{tor}}}{M} = 0$ is easy to check. The epimorphism follows 
from Lemma \ref{tf resolution}.

Define 
\[
B_{\mathrm{tor}}(w) = 
\left\{\begin{array}{ll}
\ker (B(\beta^v_{\ell-1}))\,, & w = (\ell - 1, v) \\

0\,, & w = (\ell, v) \\

[B(\beta^v_{\ell-1}) \circ \cdots \circ B(\beta^v_j)]^{-1}(B_{\mathrm{tor}}(\ell-1 , v))\,, & 
w = (j, v)\;, 1\leq j < \ell -1
\end{array}\right.
\]
We can check that this will define a torsion sub-object and quotient of this will be 
filtered representation. This completes the proof.
\end{proof}

Recall, the \emph{kernel} of a morphism $f : M \to N$ in the category $\catfilt$ 
is defined as $\ker(f)_{(j, v)} := \ker(f_{(j, v)})$ with induced maps for each arrows. 
The \emph{cokernel} of a morphism $f : M \to N$ in the category $\catfilt$ is defined as
$$
\mbox{coker}(f)_{(j, v)} := \mbox{Im}(M((j, v))\ra \mbox{coker}(f_{\ell, v}))
$$
with induced maps for each arrows (see \cite[Corollary 3.6]{SS16} or 
\cite[Proposition 3.1.2]{S99}). In case, a sub-object (respectively, quotient object) 
is kernel (respectively cokernel) of a morphism, then it is called a strict monic 
(respectively, strict epi). 

We can observe that  there are  monic in $\catfilt$ 
which are not strict monic. We can check that the monic morphism $f$ is strict monic, 
if $\iota(\mbox{coker}(f)) = \mbox{coker}(\iota(f))$.

More generally, we get the following definition of \emph{strict} filtration using the 
concept of \emph{strict} monic.

Let $M$ be an object of $\catfilt$. A filtration
$$
0 = M_0 \subset M_1 \subset M_2 \subset \cdots \subset M_{k-1} \subset M_k = M
$$ 
of $M$ is called \emph{strict} filtration if each inclusion maps are \emph{strict} 
monic morphisms of $\catfilt$ i.e. each quotient satisfies 
$\iota(M_i / M_{i-1}) = \iota(M_i)/\iota(M_{i-1}) $.

We recall the following definition of \cite[Definition 1.2.6]{An09}.
\begin{definition}\label{maximal flag} \rm{
A strict filtration 
$$
0 = M_0 \subset M_1 \subset M_2 \subset \cdots \subset M_{k-1} \subset M_k = M
$$ 
is called a flag of length $k$ on $M$ if $M_i \neq M_{i - 1}$ for $1 \leq i \leq k$. 
A flag of maximal length (on $M$) is called the maximal flag (on $M$).
}
\end{definition}
Notice that, a flag is \emph{maximal} if all the quotients $M_{i + 1} / M_i$ have no 
proper nonzero strict sub-objects. Hence, in particular, if all the quotients of a 
flag are simple objects then it is a maximal flag. Now, using lemma \cite[Lemma 1.2.8]{An09}, 
we can prove that any object $M$ of $\catfilt$ has a unique maximal flag on $M$. 
Therefore, we get a well defined associated graded object of a maximal flag on $M$
\begin{equation}\label{max gr}
\mathrm{gr^{max}}(M):= \bigoplus_{i= 1}^k M_i/M_{i-1}.
\end{equation}

We now recall definition of projective objects which will be used later in Section 
\ref{sec-det-fns}.

\begin{definition} 
A filtered representation $P$ is called projective (respectively, strongly projective) 
if for any strict (respectively, arbitrary) epimorphism $ M \to  N$ the associated map
$\Hom[\catfilt]{P}{M}  \to \Hom[\catfilt]{P}{N} $ is surjective.
\end{definition}
Dually, we can also define the injective objects in $\catfilt$, see 
\cite[Section 1.3.4]{S99}.
The projective objects in an exact category is related to the projective objects 
of its abelian envelope. In particular, for filtered representations, we get the 
following results.

\begin{proposition}\label{filtered projective}\cite[Proposition 1.3.24]{S99}
An object $P$ of $\catfilt$ is projective if and only if $\iota(P)$ is projective in 
$\catchain$. The category $\catfilt$ has enough projectives and any projective object of 
$\catchain$ is same as $\iota (P)$ for some projective object $P$ of $\catfilt$.
\end{proposition}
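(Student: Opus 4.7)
The plan is to prove the three assertions using the embedding $\iota$, the stability of filtered objects under subobjects (Proposition \ref{prop: stable kernels}), and the filtered covers produced by Lemma \ref{tf resolution}, without any explicit description of projectives.

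For the easy direction of the equivalence, I would suppose $\iota(P)$ is projective in $\catchain$ and let $f\colon M\to N$ be a strict epimorphism in $\catfilt$. Strictness means $f$ is the cokernel in $\catfilt$ of its (necessarily strict) kernel, and by the characterisation recalled in the paper together with Proposition \ref{prop: stable kernels} one has $\iota(\ker f)=\ker(\iota f)$ and $\iota(N)=\mathrm{coker}(\iota(\ker f)\to\iota(M))$ in $\catchain$. Hence $\iota(f)$ is an epimorphism in the abelian category $\catchain$. Any morphism $P\to N$ in $\catfilt$ then lifts along $\iota(f)$ by projectivity of $\iota(P)$, and the lift descends to $\catfilt$ by full faithfulness of $\iota$.

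For the converse, assume $P$ is projective in $\catfilt$ and let $q\colon B\twoheadrightarrow C$ in $\catchain$ together with $g\colon\iota(P)\to C$ be given. The plan is to form the pullback $X:=B\times_C\iota(P)$ in $\catchain$; the projection $p_P\colon X\to\iota(P)$ is surjective. Applying Lemma \ref{tf resolution} to $X$ produces an epimorphism $N\twoheadrightarrow X$ with $N\in\iota(\catfilt)$, so the composite $N\twoheadrightarrow X\twoheadrightarrow\iota(P)$ is an epimorphism in $\catchain$ whose kernel lies in $\iota(\catfilt)$ by Proposition \ref{prop: stable kernels}. Full faithfulness of $\iota$ then identifies this composite with the image under $\iota$ of a strict epimorphism $N'\twoheadrightarrow P$ in $\catfilt$, which splits by projectivity of $P$. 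Applying $\iota$ to the resulting section $P\hookrightarrow N'$ and composing with $N\to X\to B$ yields a morphism $\iota(P)\to B$ whose post-composition with $q$ equals $g$ by the defining property of the pullback.

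Enough projectives and the identification of projectives in $\catchain$ follow by a parallel use of these two ingredients. Since $\mathbb{K}(\QAl)/I$ is a finite-dimensional algebra, $\catchain$ has enough projectives. Given any projective $P'\in\catchain$, Lemma \ref{tf resolution} provides an epimorphism $N\twoheadrightarrow P'$ with $N\in\iota(\catfilt)$, which splits by projectivity; thus $P'$ is a subobject of $N$, and Proposition \ref{prop: stable kernels} forces $P'=\iota(P'')$ for a unique $P''\in\catfilt$, which is projective in $\catfilt$ by the easy direction just proved. For any $M\in\catfilt$, a projective surjection $P'\twoheadrightarrow\iota(M)$ in $\catchain$ is then of the form $\iota(P'')\twoheadrightarrow\iota(M)$; by Proposition \ref{prop: stable kernels} its kernel is filtered, so the surjection is $\iota$ of a strict epimorphism $P''\twoheadrightarrow M$ in $\catfilt$, producing the desired projective cover. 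The main obstacle in this plan is the converse direction of the equivalence, where projectivity in $\catfilt$ must be transported to the larger abelian category $\catchain$; the pullback-plus-filtered-cover construction above is the decisive trick, and its success rests throughout on the ability to stay inside the image of $\iota$ via Proposition \ref{prop: stable kernels}.
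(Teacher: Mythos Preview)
Your argument is correct, but it takes a substantially different route from the paper. The paper's proof is a one-line citation: it observes that $\catfilt$ is a strictly full subcategory of its left abelian envelope $\catchain$, notes that $\catchain$ has enough projectives, and then invokes Schneiders' general result \cite[Proposition 1.3.24]{S99} as a black box. Your proof, by contrast, is self-contained and works entirely inside the paper's own toolbox: Proposition~\ref{prop: stable kernels} (stability under subobjects, $\iota$ commutes with kernels) and Lemma~\ref{tf resolution} (every object of $\catchain$ is covered by a filtered one), combined with a pullback construction to handle the harder implication. In effect you are reproving the relevant instance of Schneiders' proposition directly.

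What each approach buys: the paper's is economical and places the statement in its natural general context of quasi-abelian categories and their envelopes. Yours makes transparent \emph{why} the result holds here, and in particular exhibits Lemma~\ref{tf resolution} as the concrete mechanism that lets one transport projectivity across $\iota$. One small remark: in your easy direction, the step ``$\iota(N)=\mathrm{coker}(\iota(\ker f)\to\iota(M))$'' deserves a word of explanation---it follows from the explicit description of cokernels in $\catfilt$ given in the paper (the cokernel map $M\to\mathrm{coker}(g)$ is vertex-wise surjective by construction), so that strict epimorphisms in $\catfilt$ become genuine epimorphisms under $\iota$. With that clarified, the argument is clean.
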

\begin{proof}
Since $\catfilt$ is a strictly full subcategory and the left abelian envelope $\catchain$ 
has enough projective objects, the assertions immediately follow from 
\cite[Proposition 1.3.24]{S99}.
\end{proof}

Recall that the indecomposable projective modules in $\catchain$ are in bijection 
with the vertices of $\QAl$ and the module corresponding to a vertex $w$ is just 
$P'_w = P_w/IP_w$, where $P_w$ is an indecomposable projective representation of 
$\QAl$ corresponding to a vertex $w$ in $\QAl$. The indecomposable projective 
modules in $\catchain$ are characterized by the property that for each object 
$N'\in \catchain$ and for each vertex $v$ of $\QAl$, we have
$$
\Hom[\catchain]{P'_w}{N'} = N'(w),
$$ 
where $N'(w)$ is a vector space assigned to a vertex $w$ by the representation 
$N'$ of $\QAl$. Now using the Proposition \ref{filtered projective}, the objects 
$P_w'$ will become the projective indecomposable objects of $\catfilt$. 
More precisely, the objects $P_w'$ are filtered representations, and they give the 
complete set of projective indecomposable objects of $\catfilt$.

\section{Rank function and slope stability}\label{sec-rank-fn}
In this section, we first discuss the notion of degree and rank functions on the 
category of filtered quiver representations following \cite{An09}. The concept 
degree and rank functions provides a good definition of semistability for filtered 
quiver representations to construct the moduli of filtered quiver representations 
using GIT. More precisely, the corresponding slope semistability for 
filtered quiver representations provides a good description of the closed points 
of the moduli scheme. At the end of the section, we also give two kinds of slope 
filtrations for which the corresponding moduli schemes have quite a different 
description of closed points.

It is known that the Grothendieck group $K_0(\mathbb{K}Q)$ of the path algebra 
$\mathbb{K}Q$ is a free abelian group of finite rank, and the rank of $K_0(\mathbb{K}Q)$ 
is precisely the cardinality of $Q_0$.
In fact, the algebra $\mathbb{K}Q$ is a hereditary algebra and simple modules are 
in bijection with the vertex set $Q_0$. We get $K_0(\mathbb{K}Q) \simeq \Z Q_0$, 
where $\Z Q_0$ is the free abelian group generated by the set $Q_0$. Hence, we can 
identify an equivalence class of each $\mathbb{K}Q$-module $M$ in $K_0(\mathbb{K}Q)$ 
with the dimension vector of corresponding representation of the underlying quiver $Q$, 
which we denote by $\dv(M)$. 
Similar identification exists for any quiver with admissible ideal of relations 
\cite[Section 4]{Ki94}. We can also identify the set of all additive functions 
$\Hom{K_0(\mathbb{K}Q)}{\Z}$ with $ \Z^{Q_0}$. These additive functions are used to 
give the definition of $\theta$-stability  by A. King \cite{Ki94}. 

Since $\catfilt$ is a quasi-abelian category, using a result of Schneider 
\cite[Proposition 1.2.35]{S99}, we can get the isomorphism 
$$
K_0(\iota): K_0(\catfilt) \simeq K_0(\catchain).
$$
Hence, we can identify $\Hom{K_0(\catfilt)}{\Z}$ with $ \Z^{Q_0}$ too, and therefore 
any additive function on filtered representations is in bijection with additive 
functions on representations of the ladder quiver.

Let $\mathrm{sk}(\catfilt)$ be the skeleton of $\catfilt$, that is, the set of 
isomorphism classes of objects of $\catfilt$.

\begin{definition}\cite[Def. 1.2.9]{An09}\label{def-rk-fn}\rm{
A \emph{rank function} on $\catfilt$ is a function 
$$
\mathrm{rk}\colon \mathrm{sk}(\catfilt) \ra \mathbb{N}
$$
which is additive on short exact sequences and takes the value $0$ only on the 
zero object.
}
\end{definition}
The rank function will extend to give the additive map 
$$
\mathrm{rk} \colon K_0(\catfilt) \to \Z ; (d_w) \mapsto \sum_{w\in (\QAl)_0} r_w d_w \, 
$$
with  positivity conditions, $\sum_{j = k}^l r_{(j, v)} > 0$  for each $1 \leq k \leq l$. 
Clearly, if all $r_w > 0$ then these conditions are satisfied and we get the notion of 
positive additive function in the sense of Rudakov \cite{Ru97}.

Given a $\Theta\in \Gamma := \Z^{(\QAl)_0}$, we get a function $\mathrm{sk}(\catfilt)\ra \Z$ 
given by
$$
\Theta(\dv(M)):= \sum_{w\in (\QAl)_0} \Theta_w \dv(M)_w \,.
$$
The additive function $\Theta$ is also called degree function. Hence, we get a group 
homomorphism
$$
\Theta \colon K_0(\catfilt)\ra \Z\,.
$$

We define a slope function $\mu \colon \mathrm{sk}(\catfilt)-\{0\}\ra \mathbb{Q}$ 
as follows:
$$
\mu_{\Theta, \mathrm{rk}}(M):= \frac{\Theta(\dv(M))}{\mathrm{rk}(\dv(M))}\,.
$$

\begin{definition}\rm{
We say that a representation $M$ of a quiver $Q$ is $\mu_{\Theta, \mathrm{rk}}$-semistable, 
if for all non-zero subrepresentations $M'$ of $M$, we have 
$$
\mu_{\Theta, \mathrm{rk}}(\dv(M'))\leq \mu_{\Theta, \mathrm{rk}}(\dv(M))
$$
If the inequality is strict for all proper non-zero subrepresentations $M'$ of $M$, 
then we say that $M$ is $\mu_{\Theta, \mathrm{rk}}$-stable. 
}
\end{definition}

Since $K_0(\catfilt) \simeq K_0(\catchain)$, we get an extension of the rank function 
$\mathrm{rk}$ to $K_0(\catchain)$. This extension of rank function may take value $0$ 
on some non-zero objects in $\catchain$. In view of this, we may not get slope function 
on $K_0(\catchain)$ as an extension of $\mu_{\Theta, \mathrm{rk}}$. However, we can 
choose an appropriate $\theta\in \Gamma$ so that we can relate the 
$\mu_{\Theta, \mathrm{rk}}$-stability with the $\theta$-stability (defined in \cite{Ki94}).

Recall that for given any stability parameter $\theta\in \Gamma$, an object $M$ of 
$\catchain$ is called $\theta$-semistable (respectively, $\theta$-stable) if 
$\theta(\dv(M)) = 0$ and for any non-trivial sub-object $M'$ of $M$, we have 
$\theta(\dv(M'))\geq 0$ (respectively, $\theta(M') > 0$).

Fix a $\Theta\in \Gamma$ and a dimension vector $\dv$.  Let $\mathrm{rk}$ be a rank 
function as in the Definition \ref{def-rk-fn}. Once we fix the slope 
$\mu = \Theta(\dv) / \mathrm{rk}(\dv)$, then we can define 
$$
\theta := (\theta_w)_{w\in (\QAl)_0} = 
(\Theta(\dv)r_w - \Theta_w \mathrm{rk}(\dv))_{w\in (\QAl)_0}.
$$

\begin{proposition}
Any object $M$ of $\catfilt$ having slope $\mu$ is $\mu_{\Theta,\mathrm{rk}}$-semistable 
(respectively, $\mu_{\Theta,\mathrm{rk}}$-stable) if and only if $ \iota(M)$ is 
$\theta$-semistable (respectively, $\theta$-stable). 
\end{proposition}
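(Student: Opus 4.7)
The plan is to reduce everything to a one-line computation expressing $\theta(\dv(N'))$ in terms of $\mu - \mu_{\Theta,\mathrm{rk}}(N')$, after first observing that subobjects of $\iota(M)$ in $\catchain$ are no more general than subobjects of $M$ in $\catfilt$.

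First I would check the correspondence of subobjects. By Proposition \ref{prop: stable kernels}, the subcategory $\catfilt$ is stable under subobjects in $\catchain$; equivalently (from the concrete description following Lemma \ref{lemma-1}), if $N' \hookrightarrow \iota(M)$ is a monomorphism in $\catchain$ then each horizontal map $N'(\beta^v_j)$ is the restriction of the injective map $\iota(M)(\beta^v_j)$ and hence is itself injective, so $N'$ lies in the essential image of $\iota$. Combined with the full faithfulness of $\iota$, this gives a bijection between subobjects of $M$ in $\catfilt$ and subobjects of $\iota(M)$ in $\catchain$. In particular, any nonzero subobject $N'$ of $\iota(M)$ is of the form $\iota(N)$ for a nonzero $N \in \catfilt$, and therefore satisfies $\mathrm{rk}(\dv(N')) = \mathrm{rk}(\dv(N)) > 0$ because the rank function vanishes only on the zero object (Definition \ref{def-rk-fn}).

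Next I would substitute the definition of $\theta$. For any object $N'$ of $\catchain$ one computes
\[
\theta(\dv(N')) \;=\; \sum_{w\in(\QAl)_0}\bigl(\Theta(\dv)\, r_w - \Theta_w\,\mathrm{rk}(\dv)\bigr)\,\dv(N')_w
\;=\; \Theta(\dv)\,\mathrm{rk}(\dv(N')) - \mathrm{rk}(\dv)\,\Theta(\dv(N')).
\]
Taking $N' = \iota(M)$ yields $\theta(\dv(M)) = 0$, which is the normalisation required for $\theta$-stability. For $N'$ a nonzero subobject of $\iota(M)$, dividing by $\mathrm{rk}(\dv)\,\mathrm{rk}(\dv(N')) > 0$ gives
\[
\frac{\theta(\dv(N'))}{\mathrm{rk}(\dv)\,\mathrm{rk}(\dv(N'))} \;=\; \mu - \mu_{\Theta,\mathrm{rk}}(N'),
\]
so $\theta(\dv(N')) \geq 0$ (resp.\ $>0$) if and only if $\mu_{\Theta,\mathrm{rk}}(N') \leq \mu$ (resp.\ $<\mu$).

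Finally I would combine the two steps: via the subobject correspondence, quantifying over all nonzero (resp.\ proper nonzero) subobjects of $\iota(M)$ in $\catchain$ is the same as quantifying over all nonzero (resp.\ proper nonzero) subobjects of $M$ in $\catfilt$, and the equivalence of the inequalities just derived converts $\theta$-semistability into $\mu_{\Theta,\mathrm{rk}}$-semistability, and similarly for the stable case. The only non-formal step is the observation that subobjects of filtered representations remain filtered, and this is already provided by Proposition \ref{prop: stable kernels}, so I do not expect any serious obstacle.
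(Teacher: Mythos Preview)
Your proposal is correct and follows essentially the same approach as the paper: both arguments hinge on the computation $\theta(\dv(N')) = \Theta(\dv)\,\mathrm{rk}(\dv(N')) - \mathrm{rk}(\dv)\,\Theta(\dv(N'))$ together with the subobject correspondence from Proposition~\ref{prop: stable kernels}. Your write-up is in fact slightly more explicit than the paper's, in that you separately verify the normalisation $\theta(\dv(M)) = 0$ and the positivity $\mathrm{rk}(\dv(N')) > 0$ for nonzero subobjects, whereas the paper leaves these implicit and simply says the converse follows by ``reversing the above arguments.''
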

\begin{proof}
Let $M$ be an object of $\catfilt$ having slope $\mu$. If $M$ is 
$\mu_{\Theta,\mathrm{rk}}$-semistable , then
for any non-zero subobject $M'$ of $M$, we have 
$\mu_{\Theta,\mathrm{rk}}(M') \leq \mu_{\Theta,\mathrm{rk}}(M)$.
That is, we have
$$
\frac{\Theta(\dv(M'))}{\mathrm{rk}(\dv(M'))} \leq 
\frac{\Theta(\dv(M))}{\mathrm{rk}(\dv(M))} = \mu = 
\frac{\Theta(\dv)}{\mathrm{rk}(\dv)}\,.
$$
Now, 
\[
\begin{array}{ll}
\theta(M')  
& = \displaystyle \sum_{w\in (\QAl)_0} (\Theta(\dv)r_w - \mathrm{rk(\dv)\Theta_w})\dv(M')_w \\
& \\
& = \Theta(\dv)\mathrm{rk}(\dv(M')) - \mathrm{rk}(\dv)\Theta(\dv(M'))\\
& \\
& \geq 0\,.
\end{array}
\]
In view of Proposition \ref{prop: stable kernels}, we can conclude that $ \iota(M)$ 
is $\theta$-semistable. Conversely, if $ \iota(M)$ is $\theta$-semistable, then by 
reversing the above arguments, we can see that $M$ is $\mu_{\Theta,\mathrm{rk}}$-semistable.
\end{proof}

\begin{proposition}\label{abelian-condition}
Given $\Theta\in \Gamma$ such that the degree function is non-negative on torsion 
subcategory, the full subcategory $\catfilt(\mu)$ consisting of zero object and the 
$\mu_{\Theta, \mathrm{rk}}$-semistable filtered representations of fixed
slope $\mu$ is a quasi-abelian category.
\end{proposition}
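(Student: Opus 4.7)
The plan is to reduce to the abelian situation via the previous proposition: an object $M\in\catfilt$ of slope $\mu$ is $\mu_{\Theta,\mathrm{rk}}$-semistable if and only if $\iota(M)$ is $\theta$-semistable in $\catchain$, where $\theta_w = \Theta(\dv)r_w - \Theta_w\mathrm{rk}(\dv)$. By the standard Rudakov--King argument, the full subcategory $\catchain(\theta)$ of $\theta$-semistable objects is an abelian subcategory of $\catchain$, closed under kernels, cokernels, extensions and direct sums. It then suffices to show that the quasi-abelian structure of $\catfilt$ restricts well to $\catfilt(\mu) = \iota^{-1}(\catchain(\theta))$: one needs closure under direct sums, kernels, and cokernels, after which the strict-mono/strict-epi stability axioms are inherited from $\catfilt$.

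Closure under direct sums is immediate from additivity of $\dv$. For kernels, given a morphism $f\colon M\to N$ in $\catfilt(\mu)$, the kernel $K$ of $\iota(f)$ computed in $\catchain$ lies in $\catchain(\theta)$, and by Proposition~\ref{prop: stable kernels} it sits inside $\catfilt$ with $\iota(K)=K$; since $\iota$ commutes with kernels, $K$ is the kernel of $f$ in $\catfilt$, so $K\in\catfilt(\mu)$.

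The main obstacle is the cokernel, and this is where the hypothesis on $\Theta$ enters. The cokernel of $f$ in $\catfilt$ is the reflection $\kappa(C)$ of $C = \mathrm{coker}(\iota(f))$, fitting into the torsion sequence
\[
0 \to C_{\mathrm{tor}} \to C \to \kappa(C) \to 0
\]
with $C_{\mathrm{tor}}\in\mathcal{T}$. Since torsion objects have vanishing extended rank, $\theta(C_{\mathrm{tor}}) = -\mathrm{rk}(\dv)\Theta(C_{\mathrm{tor}})$, so the hypothesis $\Theta|_{\mathcal{T}}\geq 0$ forces $\theta(C_{\mathrm{tor}})\leq 0$; on the other hand $\theta$-semistability of $C$ gives $\theta(C_{\mathrm{tor}})\geq 0$, hence both are zero, and additivity yields $\theta(\kappa(C)) = 0$. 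To verify full $\theta$-semistability of $\kappa(C)$, for any subobject $N'\hookrightarrow \kappa(C)$ in $\catchain$ I would form the pullback $N'' = N'\times_{\kappa(C)} C$, producing $0\to C_{\mathrm{tor}}\to N''\to N'\to 0$, and conclude $\theta(N') = \theta(N'')\geq 0$ using $\theta$-semistability of $C$ applied to the subobject $N''\subset C$. Thus $\kappa(C)\in\catfilt(\mu)$.

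With $\catfilt(\mu)$ closed under direct sums, kernels, and cokernels inside the quasi-abelian category $\catfilt$, all pullbacks and pushouts in $\catfilt(\mu)$ coincide with those in $\catfilt$; stability of strict monomorphisms under pushout and of strict epimorphisms under pullback is therefore inherited from $\catfilt$, completing the verification that $\catfilt(\mu)$ is quasi-abelian.
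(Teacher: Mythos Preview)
Your argument has a genuine gap at the cokernel step. You assert that ``torsion objects have vanishing extended rank,'' so that $\theta(C_{\mathrm{tor}}) = -\mathrm{rk}(\dv)\,\Theta(C_{\mathrm{tor}})$. This is false for a general rank function in the sense of Definition~\ref{def-rk-fn}: the positivity requirement is only $\sum_{j=k}^{\ell} r_{(j,v)} > 0$ for each $k$, not $r_{(j,v)}=0$ for $j<\ell$. For instance, with $\mathrm{rk}_{\dim}$ (all $r_w = 1$) the extension to $\catchain$ is the total dimension, which is positive on every nonzero torsion object. The paper itself remarks that the extended rank vanishes on \emph{all} of $\mathcal{T}$ only for rank functions of the special form $\mathrm{rk}_s$ in Example~\ref{ex:sink slope}. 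Without $\mathrm{rk}(C_{\mathrm{tor}})=0$ you cannot deduce $\theta(C_{\mathrm{tor}})\leq 0$, hence cannot force $\theta(C_{\mathrm{tor}})=0$; both the equality $\theta(\kappa(C))=0$ and your pullback argument for semistability of $\kappa(C)$ then collapse, and with them the claim that cokernels computed in $\catfilt$ land in $\catfilt(\mu)$. The final inheritance step (strict monos/epis stable under pushout/pullback) also presupposes that the cokernels of $\catfilt(\mu)$ agree with those of $\catfilt$, which is exactly what is at stake.

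The paper proceeds quite differently and avoids $\catchain$ altogether. It observes that, by \cite[Corollary~1.4.10]{An09}, the hypothesis $\Theta\geq 0$ on $\mathcal{T}$ is equivalent to $\mu_{\Theta,\mathrm{rk}}$ being a slope function in Andr\'e's sense, i.e.\ $\mu(M)\leq \mu(M')$ for every epi-monic $M\to M'$ in $\catfilt$; then \cite[Lemma~1.3.9]{An09} yields the quasi-abelian structure on $\catfilt(\mu)$ directly inside the quasi-abelian category $\catfilt$. The cokernels used there are intrinsic to $\catfilt(\mu)$ and need not agree with those of $\catfilt$, which is precisely the subtlety your route through the abelian envelope runs into.
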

\begin{proof}
Assume that $\Theta$ is chosen in such a way that for each epi-monic $M \ra M'$, 
we have $\mu_{\Theta, \mathrm{rk}}(M) \leq \mu_{\Theta, \mathrm{rk}}(M')$. Then, we 
get a slope function in the sense of \cite[ Definition 1.3.1]{An09}. This is also 
equivalent to choosing a degree function which is non-negative on objects of torsion 
subcategory, see \cite[Corollary 1.4.10]{An09}. The result follows using 
\cite[Lemma 1.3.9]{An09}.
\end{proof}
 
Recall that the $\theta$-semistable objects in $\catchain$ forms an abelian full 
subcategory of $\catchain$. It is both Noetherian and Artinian, and hence we 
have Jordan-H\"older filtration for each $\theta$-semistable object in $\catchain$.

\begin{proposition} \label{strict JH}
If there is a strict filtration of any object $M$ in $\catfilt(\mu)$ with the 
successive quotients $\mu_{\Theta,\mathrm{rk}}$-stable, then the filtration obtained 
after applying $\iota$ is a Jordan-H\"older filtration of $\iota(M)$.
\end{proposition}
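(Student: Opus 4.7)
The plan is to transport the given strict filtration of $M$ in $\catfilt(\mu)$ across the embedding $\iota$ to a filtration of $\iota(M)$ in the abelian category of $\theta$-semistable objects of slope $\mu$ in $\catchain$ (where $\theta$ is the stability parameter defined just before the preceding proposition), and then to check that its successive quotients are simple in this abelian category.

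Concretely, I would fix the strict filtration
\[
0 = M_0 \subset M_1 \subset \cdots \subset M_k = M
\]
and set $Q_i := M_i/M_{i-1}$. The hypothesis that the filtration is \emph{strict} is exactly the condition $\iota(Q_i) = \iota(M_i)/\iota(M_{i-1})$ in $\catchain$, by the characterisation of strict monics recalled just before Definition \ref{maximal flag}. Since each $Q_i$ is $\mu_{\Theta,\mathrm{rk}}$-stable of slope $\mu$ and $M_0 = 0 \in \catfilt(\mu)$, an induction on $i$ using additivity of degree and rank along the short exact sequences $0\to \iota(M_{i-1})\to \iota(M_i) \to \iota(Q_i)\to 0$ in $\catchain$ shows that each $M_i$ also has slope $\mu$; combining this with the fact that $\catfilt(\mu)$ is a quasi-abelian category closed under extensions (Proposition \ref{abelian-condition}), we get $M_i \in \catfilt(\mu)$ for every $i$.

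Next I would apply the preceding proposition to each $M_i$ and each $Q_i$, obtaining that $\iota(M_i)$ is $\theta$-semistable and that $\iota(Q_i)$ is $\theta$-stable. In the abelian category of $\theta$-semistable objects of slope $\mu$ in $\catchain$, the simple objects are precisely the $\theta$-stable ones, which is a standard fact going back to \cite{Ki94} and \cite{Ru97}. Hence the chain
\[
0 = \iota(M_0) \subset \iota(M_1) \subset \cdots \subset \iota(M_k) = \iota(M)
\]
has simple successive quotients in this abelian subcategory, which is the definition of a Jordan-H\"older filtration of $\iota(M)$.

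The main obstacle is the identification $\iota(Q_i) = \iota(M_i)/\iota(M_{i-1})$, where the strictness hypothesis is indispensable: the cokernel formula for $\catfilt$ recalled before Definition \ref{maximal flag} shows that $\iota$ does not preserve cokernels of non-strict epimorphisms, so without strictness one would at best get a filtration whose successive quotients differ from $\iota(Q_i)$. A secondary subtlety is the slope bookkeeping needed to keep every $M_i$ inside $\catfilt(\mu)$, so that the translation between $\mu_{\Theta,\mathrm{rk}}$-stability and $\theta$-stability supplied by the preceding proposition can be applied uniformly along the filtration; once both are settled, the conclusion reduces to the standard description of simple objects in the abelian category of $\theta$-semistable representations of a quiver with relations.
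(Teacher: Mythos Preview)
Your proposal is correct and follows essentially the same route as the paper: use strictness to ensure $\iota$ preserves the successive quotients, invoke the preceding proposition to translate $\mu_{\Theta,\mathrm{rk}}$-stability of the $Q_i$ into $\theta$-stability of $\iota(Q_i)$, and then conclude by the standard fact that $\theta$-stable objects are simple in the abelian category of $\theta$-semistables. Your version is more explicit about the slope bookkeeping for the intermediate $M_i$, which the paper leaves implicit, but the underlying argument is the same.
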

\begin{proof}
Since the filtration is a strict filtration, and hence the successive cokernels are 
preserved under the functor $\iota$. We can observe that the category $\catfilt(\mu)$ 
is closed under sub-objects inside $\catchain$, where the stability on $\catchain$ 
is given by an appropriate $\theta$. Hence, $\mu_{\Theta, rk}$-stable objects in 
$\catfilt$ goes to $\theta$-stable objects in $\catchain$ via the functor $\iota$. 
Now using the fact that the stable objects are simple objects and hence after applying 
$\iota$, we get Jordan-H\"older filtration.
\end{proof}

\begin{definition}\rm{
Given an object $M\in \catfilt(\mu)$, a strict filtration
$$
0 = M_0 \subset M_1 \subset M_2 \subset \cdots \subset M_{k-1} \subset M_k = M
$$
such that all the quotients $M_i/M_{i-1}$ are $\mu_{\Theta, \mathrm{rk}}$-stable 
objects in $\catfilt$ having the same slope, is called the strict Jordan-H\"older 
filtration. 
}
\end{definition}
Since any stable object is nonzero, a strict Jordan-H\"older filtration gives an 
example of maximal flag (see Definition \ref{maximal flag}) in the category 
$\catfilt(\mu)$. Using the Proposition \ref{strict JH}, the Jordan-H\"older theorem 
is valid for strict Jordan-H\"older filtrations in $\catfilt(\mu)$. 
Hence, we get the well defined associated graded objects of a strict Jordan-H\"older 
filtration. The associated graded object of a strict Jordan-H\"older filtration is 
defined as 
$$
\mathrm{gr}(M):= \bigoplus_{i= 1}^k M_i/M_{i-1}
$$
and it depends only on $M$.
An object $N$ is called $\mu_{\Theta, \mathrm{rk}}$-polystable
if $N \simeq \mathrm{gr}(M)$ for some object $M$ with a strict Jordan-H\"older filtration.
\begin{definition}\rm{
We say that two objects $M$ and $N$ in $\catfilt(\mu)$ are $S^{JH}$-equivalent if 
both objects have strict Jordan-H\"older filtration and the associated graded 
objects $\mathrm{gr}(M)$ and $\mathrm{gr}(N)$ are isomorphic. We say that $M$ and 
$N$ in $\catfilt(\mu)$ are $S$-equivalent if $\mathrm{gr^{max}}(M)$ and 
$\mathrm{gr^{max}}(N)$ are isomorphic.
}
\end{definition}

Now using the above definition, we get the following Corollary of Proposition \ref{strict JH}
\begin{cor} \label{cor: gr comm}
The restriction of the functor $\iota$ \rm{(see \eqref{eq-iota})} to 
$\catfilt(\mu)$ takes $S^{JH}$-equivalent objects to $S$-equivalent objects. 
Moreover, $\iota (\mathrm{gr}(M)) \simeq \mathrm{gr}(\iota(M))$.
\end{cor}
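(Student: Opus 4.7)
The plan is to prove the graded-commutation identity $\iota(\mathrm{gr}(M)) \simeq \mathrm{gr}(\iota(M))$ first, and then read off the $S$-equivalence statement from it by functoriality of $\iota$. The two essential inputs are Proposition \ref{strict JH}, which says that a strict Jordan--H\"older filtration in $\catfilt(\mu)$ becomes an ordinary Jordan--H\"older filtration in $\catchain$ after applying $\iota$, and the very definition of a strict filtration, which records the identity $\iota(M_i)/\iota(M_{i-1}) = \iota(M_i/M_{i-1})$ at each step.

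For the graded identity, I would fix $M \in \catfilt(\mu)$ and choose a strict Jordan--H\"older filtration
$$
0 = M_0 \subset M_1 \subset \cdots \subset M_k = M,
$$
and then apply $\iota$ term by term. Proposition \ref{strict JH} converts this into a Jordan--H\"older filtration of $\iota(M)$ in the abelian category $\catchain$ (with the $\theta$-stability associated to $\mu$), so that $\mathrm{gr}(\iota(M))$ is the direct sum of the successive quotients $\iota(M_i)/\iota(M_{i-1})$. Strictness identifies each such quotient with $\iota(M_i/M_{i-1})$, and since $\iota$ is additive it commutes with the finite direct sum defining $\mathrm{gr}(M)$. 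Stringing the isomorphisms together yields
$$
\mathrm{gr}(\iota(M)) \;=\; \bigoplus_{i=1}^k \iota(M_i)/\iota(M_{i-1}) \;\simeq\; \bigoplus_{i=1}^k \iota(M_i/M_{i-1}) \;\simeq\; \iota(\mathrm{gr}(M)).
$$

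For the first assertion, suppose $M, N \in \catfilt(\mu)$ are $S^{JH}$-equivalent, so by definition $\mathrm{gr}(M) \simeq \mathrm{gr}(N)$. Applying the functor $\iota$ preserves this isomorphism, and combining with the identity just established gives $\mathrm{gr}(\iota(M)) \simeq \mathrm{gr}(\iota(N))$ in $\catchain$. Since in the abelian category $\catchain$ the Jordan--H\"older graded object is precisely the invariant classifying $S$-equivalence classes of $\theta$-semistable objects in the sense of \cite{Ki94}, this is exactly the statement that $\iota(M)$ and $\iota(N)$ are $S$-equivalent.

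The argument is essentially formal once Proposition \ref{strict JH} is in hand, and I do not anticipate any serious obstacle; the only fine point is the clean use of strictness to move successive quotients across $\iota$, which is built directly into the definition of a strict filtration.
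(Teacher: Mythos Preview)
Your proposal is correct and follows essentially the same approach as the paper: both rely on Proposition~\ref{strict JH} (that a strict Jordan--H\"older filtration in $\catfilt(\mu)$ becomes a Jordan--H\"older filtration of $\iota(M)$ in $\catchain$) together with the defining property of strictness, namely $\iota(M_i/M_{i-1}) = \iota(M_i)/\iota(M_{i-1})$. Your organization---establishing the graded-commutation identity first and then deducing the $S$-equivalence statement from it---is if anything more explicit than the paper's terser argument, which invokes Proposition~\ref{prop: stable kernels} directly to say that filtrations coincide under $\iota$ and leaves the graded identity largely implicit.
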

\begin{proof}\label{rem-s-equivalence}
Let $M$ and $N$ be two $S^{JH}$-equivalent objects in $\catfilt(\mu)$. Since 
$\mu_{\Theta,\mathrm{rk}}$-stability and $\theta$-stability are same for objects 
of $\catfilt$, we can define S-equivalence with respect to $\theta$-stability for 
objects of $\catfilt(\mu)$. Using Proposition \ref{prop: stable kernels}, we know 
that sub-objects, and hence filtrations of $M$ (respectively, $N$) coincide via 
$\iota$, and hence Jordan-H\"older filtration does not change under $\iota$. Hence, 
if we view the objects $M$ and $N$ in $\catchain$, then they remain 
$S$-equivalent in $\catchain$ with respect to the notion of $\theta$-stability. 
\end{proof}

\begin{remark}\rm{
We should mention here that it may be possible that $M$ and $N$ are not $S^{JH}$-equivalent in 
$\catfilt(\mu)$, but they are $S$-equivalent in $\catchain$. This is also reflected in the 
GIT picture (see Section \ref{sec-git-mc}).
}
\end{remark}

\begin{remark}\rm{
Let $\Theta \in \Gamma$ as in Proposition \ref{abelian-condition}, then there exists a functorial slope decreasing strict 
filtration (slope filtration \cite[Proposition 1.4.6, Theorem 1.4.7]{An09}) which will be called Harder-Narasimhan filtration 
(or HN-filtration) of filtered representations. Moreover, 
the HN-filtration of $\iota(M)$ is same as image of HN-filtration of $M$.
}
\end{remark}

\begin{example}\rm{
Define a rank function
$\mathrm{rk}_{\dim}\colon \mathrm{sk}(\catfilt) \ra \Z$
by 
$$
\mathrm{rk}_{\dim} (M):= \sum_{v\in Q_0}\dim M(v) + 
\sum_{(j, v) \in \mathbf{A_{\ell 1}} \times Q_0} \mathrm{rank}  \beta_j^v\,
$$
where $M(v) := M(\ell, v)$ (see Convention \ref{sink_identification}).

In other words, we have $r_w = 1$ for all $w\in (\QAl)_0$. The extension of this rank function 
$\mathrm{rk}_{\dim}$ on $K_0(\catchain)$ is precisely the $\dim$ function which is given by
$$
\dim (\dv) :=  \sum_{w \in (\QAl)_0} d_w \,.
$$
This can also be seen using the description of $N$ and $N'$ as in Lemma \ref{tf resolution} by 
computing the dimensions, namely 
$$
\dim N = \sum_{j = 1}^\ell (\ell - j +1) \dim B_j
$$
and 
$$
\dim N' = \sum_{j = 1}^\ell (\ell - j) \dim B_j.
$$
Using the additivity of $\mathrm{rk}_{\dim}$, we can check that $\mathrm{rk}_{\dim}(B) = \dim (B)$.

In particular, $\mathrm{rk}_{\dim}(B) = \dim (B) = 0$ if and only if $B \simeq 0$ for each object $B$ of $\catchain$.

Since the function $\mathrm{rk}_{\dim}$ is nonzero on some torsion classes, we can not use 
\cite[Corollary 1.4.10]{An09} to get the abelian category structure on $\catfilt(\mu)$.

}
\end{example}

\begin{example}\label{ex:sink slope}\rm{
We can define another positive additive function given by dimension at vertices $(\ell , v)$, say 
$\mathrm{rk}_s (B) := \sum_{v \in Q_0} r_v \dim B(\ell , v)$, where $r_v > 0$.
The arguments given in the above example (using Lemma \ref{tf resolution}) shows that $\mathrm{rk}_s (B) = 0$ 
if and only if $ B$ is isomorphic to an object of 
$\mathcal T$. We can also define the slope using this new positive additive function, say 
$\mu_s (B) := \Theta(\dv(B)) / \mathrm{rk}_s(B)$. Now using \cite[Corollary 1.4.10]{An09}, we can get the 
abelian category structure on $\catfilt(\mu)$ with respect to the stability $\mu_s$, if $\Theta\in \Gamma$ is
chosen as in the Proposition \ref{abelian-condition}. Hence, in this case, the Jordan-H\"older
theorem holds in $\catfilt(\mu)$. Consequently, the notion of $S^\mathrm{JH}$-equivalence and $S$-equivalence
coincides, in such cases.
}
\end{example}

\begin{remark}\rm{
If rank function is zero for all objects of torsion subcategory, then it is of the form $\mathrm{rk}_s$ of example \ref{ex:sink slope}. 
}
\end{remark}

\section{GIT and Moduli construction}\label{sec-git-mc}
In this section, we consider the moduli problem for filtered quiver 
representations with the slope stability using rank function 
(see Definition \ref{def-rk-fn}).

We first recall some basic definitions.
Let $\Sch^\mathrm{op}$ be the opposite category of $\mathbb{K}$-schemes and $\Set$ 
be the category of sets. For a scheme $Z$, its functor of points
$$
\underline{Z}\colon \Sch^\mathrm{op}\ra \Set
$$
is given by $X\mapsto \mathrm{Hom}(X, Z)$. By Yoneda Lemma, every natural transformation
$\underline{Y}\ra \underline{Z}$ is of the form $\underline{f}$ for some morphism of schemes
$f\colon Y\ra Z$.

\begin{definition}\cite[\S 1, p. 60]{Si94}\rm{
Let $\M \colon \Sch^\mathrm{op}\ra \Set$ be a functor, $\Ms$ a scheme and 
$\psi\colon \M\ra \underline{\Ms}$ a natural transformation. We say that 
$(\Ms, \psi)$ co-represents $\M$ if for each scheme $Y$ and each natural 
transformation $h\colon \M\ra \underline{Y}$, there exists a unique $g\colon \Ms\ra Y$
such that $h = \underline{g}\circ \psi$, that is the following diagram
\[
\xymatrix{
\M \ar[d]_\psi \ar[rd]^h \\
\underline{\Ms} \ar[r]_{\underline{g}} & \underline{Y}
}
\]
commutes.
}

Let $\M_1, \M_2 \colon \Sch^\mathrm{op}\ra \Set$ be two functors. A morphism of 
functors $g\colon \M_1 \ra \M_2$ is said to be a local isomorphism, if it induces 
an isomorphism of sheafification in the Zariski topology.
\end{definition}

Note that if $g\colon \M_1 \ra \M_2$ is a local isomorphism, then the condition that 
$(\Ms_1, \psi_1)$ co-represents $\M_1$ is equivalent to the condition that 
$(\Ms_2, \psi_2)$ co-represents $\M_2$ \cite[\S 1, p. 60]{Si94} or \cite[Lemma 4.7]{AK07}.

\subsection{Representation spaces and GIT}\label{subsec-git}
Let us fix $\Theta\in \Gamma$ and the dimension vector $\dimv{d}$ for the quiver 
$\QAl$. Let 
$$
\rs := \bigoplus_{a\in (\QAl)_1} 
\Hom[\mathbb{K}]{\mathbb{K}^{d_{s(a)}}}{\mathbb{K}^{d_{t(a)}}}
$$ 
be the space of representations of $\QAl$ having dimension vector $\dimv{d}$.
Let $\rsc$ be the closed subset of $\rs$ consisting of $(\alpha_a)$ satisfying the 
relation in $I$. Let $\rsf$ be an open subset of $\rsc$ consisting of 
$(\alpha_a)\in \rsc$ such that for any $a\in \mathbf A_{\ell 1} \times Q_0$, we have 
$\alpha_a$ injective map.

There is a natural action of the group 
$$
G(\QAl):= \displaystyle \prod_{w\in (\QAl)_0} \mathrm{GL}(\mathbb{K}^{d_w})
$$
on $\rs$ such that the isomorphism classes in $\catQAl$ correspond to the orbits in 
$\rs$ with respect to this action. Moreover, $\rsc$ and $\rsf$ are invariant under 
this action. Note that $\rsc$ is a closed subscheme of $\rs$, while $\rsf$ is a 
locally closed subscheme of $\rs$.
Let $\rs^\mathrm{ss}$ (respectively, $\rsc^\mathrm{ss}$, $\rsf^\mathrm{ss}$) be the 
open subset of $\rs$ (respectively, $\rsc$, $\rsf$) which is 
$\mu_{\Theta, \mathrm{rk}}$-semistable locus in $\rs$ (respectively, $\rsc$, $\rsf$).

Let $G:= G(\QAl)/\Delta$, where
$\Delta:= \{(t\mathbf{1}_{\mathbb{K}^{d_w}})_{w\in (\QAl)_0}\,|\, t\in \mathbb{K}^*\}$ 
and there is an action of this group $G$ on $\rs$.

Let $\chi_{\theta} \colon G\ra \mathbb{K}^*$ be the character defined by
\begin{equation}\label{eq-chi-theta}
\chi_{\theta}((g_w)):= \prod_{w\in (\QAl)_0} 
\det(g_w)^{\big(\Theta(\dv)r_w - \mathrm{rk}(\dv)\Theta_w\big)}
\end{equation}
where $\Theta\in \Gamma$. 
\begin{proposition}
As a special case of \cite[Proposition 3.1 \& 3.2]{Ki94}, we have the following:
\begin{enumerate}
\item A point $x\in \rs$ is $\chi_{\theta}$-semistable if and only if the corresponding object 
$M_x\in \catQAl$ is $\mu_{\Theta, \mathrm{rk}}$-semistable.
\item Two points $x, y \in \rs^\mathrm{ss}$ are GIT-equivalent (with respect to $\chi_{\theta}$) 
if and only if the corresponding objects $M_x$ and $M_y$ are $S$-equivalent in $\catQAl$ 
(with respect to $\theta$-stability).
\item A point $x\in \rsc$ is $\chi_{\theta}$-semistable if and only if the corresponding object 
$M_x\in \catchain$ is $\mu_{\Theta, \mathrm{rk}}$-semistable.
\item Two points $x, y \in \rsc^\mathrm{ss}$ are GIT-equivalent (with respect to $\chi_{\theta}$) 
if and only if the corresponding objects $M_x$ and $M_y$ are $S$-equivalent in $\catchain$ 
(with respect to $\theta$-stability).
\end{enumerate}
\end{proposition}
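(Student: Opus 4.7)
The strategy is to obtain this as a direct application of King's theorem \cite[Prop.~3.1 \& 3.2]{Ki94} together with the algebraic translation between slope stability and linear stability already established in the previous section. The character $\chi_{\theta}$ in \eqref{eq-chi-theta} has been defined precisely so that its associated additive function on dimension vectors is $w\mapsto \Theta(\dv)r_w - \mathrm{rk}(\dv)\Theta_w = \theta_w$ in King's formalism. Thus King's criterion applied to the ambient space $\rs$ of representations of the ladder quiver $\QAl$ immediately yields (1) at the level of $\theta$-semistability: $x\in \rs$ is $\chi_{\theta}$-semistable if and only if the representation $M_x$ is $\theta$-semistable in $\catQAl$.

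To convert $\theta$-semistability into $\mu_{\Theta,\mathrm{rk}}$-semistability, I would re-use the computation from the proposition comparing the two stabilities earlier in the paper: for any subrepresentation $M'\subseteq M_x$,
$$
\theta(\dv(M')) \;=\; \Theta(\dv)\,\mathrm{rk}(\dv(M')) - \mathrm{rk}(\dv)\,\Theta(\dv(M')),
$$
and the right-hand side is $\geq 0$ if and only if $\mu_{\Theta,\mathrm{rk}}(M')\leq \mu_{\Theta,\mathrm{rk}}(M_x)$, using $\mu_{\Theta,\mathrm{rk}}(M_x) = \Theta(\dv)/\mathrm{rk}(\dv)$. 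Part (2) then follows from \cite[Prop.~3.2]{Ki94}, which identifies GIT-equivalent orbits in $\rs^{\mathrm{ss}}$ with the $S$-equivalence classes under $\theta$-stability, since the Jordan--H\"older filtration is computed in $\catQAl$ and the notion of $S$-equivalence for the two stabilities coincides.

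For parts (3) and (4), I would argue formally: $\rsc\subset \rs$ is a $G$-invariant closed subscheme, so by standard GIT the restriction of the $\chi_{\theta}$-linearization to $\rsc$ yields $\rsc^{\mathrm{ss}} = \rsc \cap \rs^{\mathrm{ss}}$ and the $S$-equivalence on $\rsc$ is the restriction of the one on $\rs$. It then suffices to observe that sub-objects of an object $M\in \catchain$ (computed in $\catchain$) coincide with subrepresentations of $M$ viewed in $\catQAl$: for any subrepresentation $M'\subseteq M$ in $\catQAl$, one automatically has $M'(r) = 0$ for every $r\in I$, since $M'(r)$ is the restriction of $M(r)=0$. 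Consequently the $\theta$-semistability test is the same inside $\catchain$ as inside $\catQAl$, and Jordan--H\"older filtrations agree. Parts (3) and (4) are immediate from (1) and (2) combined with this observation.

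The step that requires the most care, though it is not a genuine obstacle, is verifying that sub-objects in $\catchain$ coincide with those in $\catQAl$; once this is noted, every claim reduces to King's correspondence, and no new GIT argument is needed beyond restricting a linearization to a closed invariant subscheme.
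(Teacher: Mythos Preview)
Your proposal is correct and is precisely the intended argument: the paper gives no proof of this proposition at all, simply stating it as a special case of \cite[Prop.~3.1 \& 3.2]{Ki94}, and your write-up is an explicit unpacking of that citation together with the already-established translation between $\theta$-stability and $\mu_{\Theta,\mathrm{rk}}$-stability.

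One small caution worth flagging: in your translation step you write that $\theta(\dv(M'))\geq 0$ is equivalent to $\mu_{\Theta,\mathrm{rk}}(M')\leq \mu_{\Theta,\mathrm{rk}}(M_x)$, but this equivalence uses $\mathrm{rk}(\dv(M'))>0$. For objects of $\catfilt$ this is guaranteed by the definition of a rank function, but for arbitrary subrepresentations in $\catQAl$ or $\catchain$ the extended rank function can vanish (the paper itself notes this). The paper is tacitly identifying $\mu_{\Theta,\mathrm{rk}}$-semistability with $\theta$-semistability here; your argument is fine provided you read part (1) and (3) in that spirit, which is exactly what the paper does.
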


\subsection{Moduli functors}

A flat family of $\mathbb{K}(\QAl)$-modules over a connected scheme $S$ is a 
locally-free sheaf $\sheaf{F}$ over $S$ together with a $\mathbb{K}$-algebra 
homomorphism $\mathbb{K}(\QAl)\ra \mathrm{End}(\sheaf{F})$.
On the other hand, a flat family of representations of $\QAl$ is a representation 
of $\QAl$ in the category of locally-free sheaves over $S$. The equivalence between 
the $\mathbb{K}(\QAl)$-modules and the representations of $\QAl$ extends naturally 
to families.

Consider the functor
$$
\mf(\dv) \colon \Sch^\mathrm{op}\ra \Set ~(\mbox{respectively, } 
\mtf(\dv) \colon \Sch^\mathrm{op}\ra \Set)
$$
defined by assigning to each $\mathbb{K}$-scheme $S$ the set of isomorphism classes 
of flat families over $S$ of $\mu_{\Theta, \mathrm{rk}}$-semistable  
(respectively, $\mu_{\Theta, \mathrm{rk}}$-stable) representations of $\QAl$ having 
dimension vector $\dimv{d}$. 

There is a natural functor $h\colon \fp{\rs^\mathrm{ss}} \ra \mf(\dv)$ 
(defined by $(f\colon S\ra \rs^\mathrm{ss}) \mapsto [f^*\mathbb{M}]$, where $\mathbb{M}$ 
is a tautological family on $\rs^\mathrm{ss}$), which induces a local isomorphism 
$\tilde{h}\colon \fp{\rs^\mathrm{ss}}/\fp{G} \ra \mf(\dv)$. 
This reduces the problem to the existence of good quotient of $\rs^\mathrm{ss}$ by $G$. 
It is proved in \cite{Ki94} that the GIT quotient $\pi \colon \rs^\mathrm{ss}\ra \ms(\dv)$ 
exists and it is a good quotient.

\begin{theorem}\cite{Ki94}
There exist a projective variety $\ms(\dv)$ over $\mathbb{K}$ which co-represents 
the moduli functor $\mf(\dv)$. In particular, the closed points of $M$ corespond to 
the $S$-equivalence classes of representations of $\QAl$ having dimension vector $\dimv{d}$. 

Moreover, there exists a quasi-projective variety $\mst(\dv)$ co-representing the moduli 
functor $\mtf(\dv)$ whose closed points correspond to isomorphism classes of stable 
representations.
\end{theorem}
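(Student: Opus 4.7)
The plan is to apply King's GIT construction from \cite{Ki94} to the ladder quiver $\QAl$ directly, since $\mf(\dv)$ parametrises representations of the quiver $\QAl$ (without imposing the relations $I$). The preparatory material in Section \ref{subsec-git} has been set up precisely so that this citation goes through: the representation space $\rs$ is an affine $\mathbb{K}$-scheme, the reductive group $G=G(\QAl)/\Delta$ acts on it, and the character $\chi_\theta$ from \eqref{eq-chi-theta} provides a $G$-linearisation of the trivial line bundle on $\rs$.

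First, I would take the candidate moduli space to be the projective-over-affine GIT quotient
\[
\ms(\dv) \;:=\; \rs^{\mathrm{ss}}/\!/_{\chi_\theta} G \;=\; \mathrm{Proj}\bigoplus_{n\geq 0} \mathbb{K}[\rs]^{G,\chi_\theta^n}.
\]
Since $\rs$ is an affine space on which $G$ acts linearly and the ring of strict invariants $\mathbb{K}[\rs]^G$ consists only of constants (this is the content of King's projectivity argument, using that $\QAl$ has no oriented cycles, which follows from $Q$ having no oriented cycles together with the linearity of $\mathbf{A}_\ell$), the resulting quotient is projective over $\mathbb{K}$. The previous proposition identifies the $\chi_\theta$-semistable locus on $\rs$ with the $\mu_{\Theta,\mathrm{rk}}$-semistable locus $\rs^{\mathrm{ss}}$, so the GIT semistable locus matches the one used to define $\mf(\dv)$.

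Next, I would establish co-representation. The tautological family $\mathbb{M}$ on $\rs^{\mathrm{ss}}$ yields a natural transformation $h\colon \fp{\rs^{\mathrm{ss}}} \to \mf(\dv)$, and standard descent shows that $h$ factors through $\fp{\rs^{\mathrm{ss}}}/\fp{G}$ and induces a local isomorphism $\tilde{h}\colon \fp{\rs^{\mathrm{ss}}}/\fp{G}\to \mf(\dv)$ (in the Zariski topology). Given any other scheme $Y$ with a natural transformation $\mf(\dv)\to \fp{Y}$, composing with $\tilde h$ gives a $G$-invariant morphism $\rs^{\mathrm{ss}}\to Y$, and the universal property of the categorical GIT quotient produces the unique factorisation through $\ms(\dv)$. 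Because local isomorphism preserves the property of being co-represented, $(\ms(\dv),\psi)$ co-represents $\mf(\dv)$.

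Finally, the identification of closed points with $S$-equivalence classes is exactly part (2) of the preceding proposition applied to $\rs^{\mathrm{ss}}$: closed $G$-orbits in $\rs^{\mathrm{ss}}$ correspond to polystable representations, and any semistable representation degenerates to its polystable $S$-equivalence representative. For the stable part, restrict to the open subset $\rs^{\mathrm{s}}\subset \rs^{\mathrm{ss}}$ of $\chi_\theta$-stable points, on which stabilisers are trivial modulo $\Delta$; King's theorem shows the restricted quotient is a geometric quotient and is therefore a quasi-projective variety $\mst(\dv)\subset \ms(\dv)$ whose closed points are in bijection with isomorphism classes of stable representations. The main \emph{a priori} obstacle, namely verifying that categorical GIT semistability matches the intrinsic $\mu_{\Theta,\mathrm{rk}}$-semistability, has already been settled in the proposition of Section \ref{subsec-git}, so no substantive new work remains beyond transcribing King's argument into this setting.
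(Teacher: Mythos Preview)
Your proposal is correct and aligns with the paper's treatment: the paper does not give an independent proof of this theorem but simply cites \cite{Ki94}, having already noted in the surrounding text that the local isomorphism $\tilde{h}$ and the good GIT quotient $\pi\colon \rs^{\mathrm{ss}}\to \ms(\dv)$ are precisely King's construction. Your sketch fills in the standard details of that citation (projectivity from the absence of oriented cycles in $\QAl$, co-representation via the local isomorphism $\tilde{h}$, and the closed-point description via the preceding proposition), which is exactly what the reference to \cite{Ki94} is meant to invoke.
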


Consider the functor
$$
\mfc(\dv) \colon \Sch^\mathrm{op}\ra \Set ~(\mbox{respectively, } 
\mtfc(\dv) \colon \Sch^\mathrm{op}\ra \Set)
$$
defined by assigning to each $\mathbb{K}$-scheme $S$ the set of isomorphism classes 
of flat families over $S$ of $\mu_{\Theta, \mathrm{rk}}$-semistable 
(respectively, $\mu_{\Theta, \mathrm{rk}}$-stable) objects of $\catchain$ having 
dimension vector $\dimv{d}$.

By restricting the tautological family $\mathbb{M}$ on $\rs^\mathrm{ss}$ to 
$\rsc^\mathrm{ss}$, we get a natural functor $\fp{\rsc^\mathrm{ss}} \ra \mfc(\dv)$. 
By \cite[Proposition 5.2]{Ki94},  it induces a local isomorphism 
$\tilde{h}_\mathrm{rel}\colon \fp{\rsc^\mathrm{ss}}/\fp{G} \ra \mfc(\dv)$ reducing 
the problem to the existence of good quotient $\rsc^\mathrm{ss}$ by $G$. It is proved 
in \cite[\S 4]{Ki94} that the the GIT quotient 
$\pi_\mathrm{rel} \colon \rsc^\mathrm{ss}\ra \msc(\dv)$ exists and it is a good quotient.

\begin{theorem}\cite{Ki94}
There exist a projective variety $\msc(\dv)$ over $\mathbb{K}$ which co-represent the 
moduli functor $\mfc(\dv)$. In particular, the closed points of $\msc(\dv)$ correspond 
to the $S$-equivalence classes of objects of $\catchain$ having dimension vector $\dimv{d}$.

Moreover, there exists a quasi-projective variety $\mstc(\dv)$ co-representing the 
moduli functor $\mtfc(\dv)$ whose closed points correspond to isomorphism classes 
of stable representations.
\end{theorem}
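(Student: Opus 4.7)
The plan is to deduce the theorem from the previously stated result for $\QAl$ without relations, by exploiting the fact that $\rsc$ is a closed $G$-invariant subscheme of $\rs$ cut out by the commutativity relations $\beta_k^{t(a)}\alpha_k^a - \alpha_{k+1}^a\beta_k^{s(a)}$ from the admissible ideal $I$. The guiding principle is that GIT behaves well under closed invariant immersions, so the moduli space for $\catchain$ should embed as a closed subscheme of the moduli space for $\catQAl$.

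Concretely, I would first note that by parts (1) and (3) of the preceding proposition, the $\mu_{\Theta,\mathrm{rk}}$-semistability of an object of $\catchain$ is governed by exactly the same character $\chi_\theta$ as for objects of $\catQAl$, so $\rsc^\mathrm{ss} = \rsc \cap \rs^\mathrm{ss}$ as a closed $G$-invariant subscheme of $\rs^\mathrm{ss}$. I would then invoke the standard GIT fact that the good quotient $\pi\colon \rs^\mathrm{ss}\to \ms(\dv)$ restricts to a good quotient on the closed $G$-invariant subscheme $\rsc^\mathrm{ss}$ onto a closed subscheme of $\ms(\dv)$, which we take as the definition of $\msc(\dv)$ together with the map $\pi_\mathrm{rel}$. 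Projectivity of $\msc(\dv)$ is then inherited from that of $\ms(\dv)$. For the stable part, $\rsc^\mathrm{s} = \rsc\cap \rs^\mathrm{s}$ is open in $\rsc^\mathrm{ss}$, and the induced geometric quotient $\mstc(\dv)$ is an open subscheme of $\msc(\dv)$, hence quasi-projective.

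Co-representability of $\mfc(\dv)$ by $\msc(\dv)$ then follows formally. The local isomorphism $\tilde{h}_\mathrm{rel}\colon \fp{\rsc^\mathrm{ss}}/\fp{G}\to \mfc(\dv)$ is already established in the text; since $\pi_\mathrm{rel}$ is a good quotient, $\fp{\msc(\dv)}$ co-represents the quotient functor $\fp{\rsc^\mathrm{ss}}/\fp{G}$, and by \cite[Lemma 4.7]{AK07} local isomorphism of functors preserves the property of being co-represented, so $\msc(\dv)$ co-represents $\mfc(\dv)$. The identification of closed points with $S$-equivalence classes of objects of $\catchain$ is part (4) of the preceding proposition combined with the standard fact that the fibres of a good GIT quotient are precisely the orbit-closure equivalence classes. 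The stable analogue is simpler: stable orbits are closed of maximal dimension, so the closed points of $\mstc(\dv)$ correspond to isomorphism classes of stable objects.

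The main obstacle I anticipate is not conceptual but a careful verification that the restriction of the good quotient to the closed invariant subscheme $\rsc^\mathrm{ss}$ yields a closed subscheme of $\ms(\dv)$ rather than merely a constructible subset, and that it inherits the universal property of a good quotient. This is a routine GIT statement, and in the present quiver-with-relations context it is already covered by \cite[\S 4 \& Proposition 5.2]{Ki94}, as is alluded to in the sentence preceding the theorem; the proof essentially amounts to invoking those results and assembling the pieces above.
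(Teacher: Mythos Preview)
Your proposal is correct and aligns with the paper's treatment: the paper does not give a proof of this theorem but simply cites it from \cite{Ki94}, with the paragraph preceding the statement already outlining the argument via the local isomorphism $\tilde{h}_\mathrm{rel}$ from \cite[Proposition~5.2]{Ki94} and the existence of the good quotient from \cite[\S 4]{Ki94}. Your sketch faithfully unpacks exactly these ingredients, so there is nothing to add.
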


The following result stated for ladder quiver, however, remains valid for any finite 
quiver (without oriented cycles) with admissible ideal.

\begin{proposition}\label{prop: rel embedding}
There is a closed set-theoretic embedding $\psi \colon \msc(\dv) \ra \ms(\dv)$. If 
characteristic of $\mathbb{K}$ is zero, then $\psi$ is closed scheme-theoretic embedding. 
In positive characteristic, the morphism $\psi$ is scheme-theoretic embedding on the 
stable locus $\mstc(\dv)$.
\end{proposition}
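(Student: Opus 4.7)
The plan is to view the morphism $\psi$ as the descent of the $G$-equivariant closed immersion $\rsc^{\mathrm{ss}} \hookrightarrow \rs^{\mathrm{ss}}$. The subvariety $\rsc \subset \rs$ is cut out by the (homogeneous) relations in $I$, hence it is $G$-invariant and closed, and $\rsc^{\mathrm{ss}} = \rsc \cap \rs^{\mathrm{ss}}$ because the stability condition is defined by the same character $\chi_\theta$. Pulling back the $G$-linearised line bundle, we obtain an induced morphism $\psi : \msc(\dv) \to \ms(\dv)$ between the two projective GIT quotients.

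For the set-theoretic part, I would argue in two steps. First, $\psi$ has closed image: since $\rsc^{\mathrm{ss}}$ is closed and $G$-invariant in $\rs^{\mathrm{ss}}$ and $\pi$ is a good quotient, $\pi(\rsc^{\mathrm{ss}})$ is closed in $\ms(\dv)$ and it coincides (as a set) with the image of $\psi$. Second, $\psi$ is injective on closed points: a closed point of $\msc(\dv)$ is an $S$-equivalence class of semistable objects of $\catchain$, and similarly for $\ms(\dv)$ in $\catQAl$. If $M_x$ and $M_y$ lie in $\catchain$ and are $S$-equivalent in $\catQAl$, then their Jordan-H\"older filtrations in $\catQAl$ have the same associated graded; but the relations in $I$ are preserved under passing to subobjects, quotients and direct sums, so these JH-filtrations already live inside $\catchain$ and witness $S$-equivalence there. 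Hence $\psi$ is an injection between projective varieties onto a closed subset, i.e.\ a set-theoretic closed embedding.

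For the scheme-theoretic statement in characteristic zero, the key input is that $G$ is linearly reductive, so the functor of $G$-invariants is exact. Covering $\ms(\dv)$ by affine opens of the form $U_f = \mathrm{Spec}\,\mathbb{K}[\rs]^G_{(f)}$ for $G$-invariant sections $f$ (the standard cover from GIT), the preimage in $\msc(\dv)$ is $\mathrm{Spec}\,\mathbb{K}[\rsc]^G_{(f|_{\rsc})}$. Since $\rsc$ is cut out by a $G$-invariant ideal $J \subset \mathbb{K}[\rs]$, the exactness of invariants yields a short exact sequence $0 \to J^G \to \mathbb{K}[\rs]^G \to \mathbb{K}[\rsc]^G \to 0$, so the restriction map is surjective on each affine chart; localising preserves surjectivity. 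This exhibits $\psi$ as a closed immersion of schemes.

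The main obstacle, and where I expect to have to work, is the positive characteristic case: $G$ is only geometrically reductive, so the invariants sequence above need not be right exact, and a priori one gets only a finite (possibly nilpotent-thickened) morphism. On the stable locus, however, the picture simplifies: $\pi|_{\rs^{\mathrm{s}}}$ and $\pi_{\mathrm{rel}}|_{\rsc^{\mathrm{s}}}$ are geometric quotients with trivial stabilisers (modulo the scalars in $\Delta$), so they are principal $G$-bundles in the fppf (even \'etale) topology. The $G$-equivariant closed immersion $\rsc^{\mathrm{s}} \hookrightarrow \rs^{\mathrm{s}}$ then descends to a closed immersion $\mstc(\dv) \hookrightarrow \mst(\dv)$ by fppf descent of closed immersions, and by the universal property of the GIT quotient the scheme structure of $\mstc(\dv)$ agrees with the one obtained by restricting $\psi$. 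This gives the claimed scheme-theoretic embedding on the stable loci, completing the proof.
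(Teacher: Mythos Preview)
Your proposal is correct and follows essentially the same route as the paper: construct $\psi$ from the $G$-equivariant closed immersion $\rsc^{\mathrm{ss}}\hookrightarrow\rs^{\mathrm{ss}}$ via the universal property of the categorical quotient, use that good quotients send closed $G$-stable subsets to closed subsets for the set-theoretic part, invoke the Reynolds operator (exactness of $(-)^G$) in characteristic zero, and use that the stable locus is a geometric quotient with trivial stabilisers in positive characteristic. The paper's own proof is considerably terser --- it simply names these ingredients and cites \cite[Proposition~6.7]{AK07} --- whereas you have unpacked the injectivity via $S$-equivalence and the descent argument explicitly; your expansion is sound.
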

\begin{proof}
Since $\pi_{\rm rel}$ is a categorical quotient, we get the following commutative 
diagram of topological spaces
\[
\xymatrix{
 \rsc^\mathrm{ss} \ar[d]_{\pi_\mathrm{rel}} \ar[r]^j
& \rs^\mathrm{ss} \ar[d]^\pi \\
 \msc(\dv) \ar[r]_\psi & \ms(\dv) 
}.
\]
Since $j$ is a closed embedding, $\pi$ and $\pi_{\rm rel}$ are good quotients, we can 
see that the map $\psi$ is a closed embedding.
 
Note that $\msc(\dv)  = \pi(\rsc^\mathrm{ss})$ in characteristic zero, using Reynold's 
operator and in characteristic $p$, we can use properties of geometric quotient on 
stable locus to get the scheme theoretic embedding (cf. \cite[Proposition 6.7]{AK07}).
\end{proof}

Now, using $\mu_{\Theta, \mathrm{rk}}$-semistability, we define moduli 
functor for filtered quiver representations. Consider the functor
$$
\mff(\dv) \colon \Sch^\mathrm{op}\ra \Set ~(\mbox{respectively, } 
\mtff(\dv) \colon \Sch^\mathrm{op}\ra \Set)
$$
defined by assigning to each $\mathbb{K}$-scheme $S$ the set of isomorphism classes of 
flat families over $S$ of $\mu_{\Theta, \mathrm{rk}}$-semistable (respectively, 
$\mu_{\Theta, \mathrm{rk}}$-stable) objects of $\catfilt$ having dimension vector 
$\dimv{d}$.

In the remaining part of this section, we reduce the problem of co-representability 
of this moduli 
functor to the existence of GIT quotient, and describe the closed points of the 
corresponding moduli space (co-representing scheme) using the results of previous 
sections.

\subsection*{Moduli construction}
The following Proposition is key to give an algebraic description for two points in the 
representation space $\rsf$ to be GIT equivalent (see Proposition \ref{prop:S-eq}).

\begin{proposition}\label{prop-1-ps-filtration} Let $x\in \rsf$. Then,
there is a surjection from the set
$$
\{1\text{-}PS\; \lambda\colon \mathbb{K}^*\ra G\; \mbox{such that}\; 
\lim \lambda(t)\cdot x\; \mbox{exist in}\; \rsf \}
$$
to
$$
\{\mbox{strict filtrations of}\; M_x\; \mbox{in}\; \catfilt \} .
$$
\end{proposition}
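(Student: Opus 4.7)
The plan is to adapt King's classical construction in \cite{Ki94} of the bijection between $1$-PS and filtrations to the filtered setting; the new ingredient is to match the condition ``$\lim_{t\to 0}\lambda(t)\cdot x\in \rsf$" precisely with the strictness of the resulting filtration, which is where the definition of $\rsf$ (injectivity of the $\beta$-maps) couples with the definition of strict monic in $\catfilt$.

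First, I would associate to a $1$-PS $\lambda : \mathbb{K}^* \to G$ the weight decomposition at each vertex: after lifting to a $1$-PS of $G(\QAl)$ and writing $\mathbb{K}^{d_w} = \bigoplus_{n\in\Z} V_w^{(n)}$, where $\lambda(t)$ acts as $t^n$ on $V_w^{(n)}$, set $F^{\geq n} M_x(w) := \bigoplus_{m \geq n} V_w^{(m)}$. A standard computation shows $\lim_{t\to 0}\lambda(t)\cdot x$ exists in $\rs$ if and only if every arrow map (both the $\alpha$'s and the $\beta$'s) preserves $F^{\geq \bullet}$, and since the relations of $I$ are cut out by equations preserved by the $G$-action, the limit automatically lies in $\rsc$. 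Thus $F^{\geq n} M_x$ is already a chain of subrepresentations of $M_x$ in $\catchain$.

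Next I would check the strictness. The limit lies in $\rsf$ iff each $\beta_k^v$ remains injective in the limit. But the limiting $\beta_k^v$ is precisely the associated graded map $\bigoplus_n \mathrm{gr}^n \beta_k^v$ with $\mathrm{gr}^n \beta_k^v : V_{k,v}^{(n)} \to V_{k+1,v}^{(n)}$ induced by $\beta_k^v$, so injectivity of every $\mathrm{gr}^n \beta_k^v$ is equivalent to the preimage identity $(\beta_k^v)^{-1}(F^{\geq n+1} M_x(k+1,v)) = F^{\geq n+1} M_x(k,v)$ for all $n$. Under Convention \ref{sink_identification}, identifying $M_x(k,v)$ with its image in $M_x(\ell,v)$, this is exactly the strict-subobject condition $F^{\geq n} M_x(k,v) = F^{\geq n} M_x(\ell,v) \cap M_x(k,v)$, so the filtration $F^{\geq \bullet} M_x$ is a strict filtration of $M_x$ in $\catfilt$, giving a well-defined map from the first set to the second.

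For surjectivity, let $0 = N^{(0)} \subset N^{(1)} \subset \cdots \subset N^{(r)} = M_x$ be a strict filtration in $\catfilt$. At each vertex $w$ I would pick a splitting $M_x(w) = \bigoplus_{j=1}^{r} U_w^{(j)}$ with $U_w^{(j)}$ a complement of $N^{(j-1)}(w)$ in $N^{(j)}(w)$, and define $\lambda$ by letting it act as $t^{-j}$ on $U_w^{(j)}$; then $F^{\geq -j}$ recovers $N^{(j)}$. Because each $N^{(j)}$ is a subrepresentation, every $\alpha$ and $\beta$ preserves $F^{\geq \bullet}$, so the limit exists in $\rs$. Strictness of $N^{(j-1)}$ yields $(\beta_k^v)^{-1}(N^{(j-1)}(k+1,v)) = N^{(j-1)}(k,v)$, which together with $U_{k,v}^{(j)} \cap N^{(j-1)}(k,v) = 0$ forces the induced map $U_{k,v}^{(j)} \to U_{k+1,v}^{(j)}$ to be injective, so the limit lies in $\rsf$. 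The main obstacle is exactly the strictness/injectivity equivalence in the second paragraph; once this is translated through Convention \ref{sink_identification} it reduces to the definition of strict monic recalled before Definition \ref{maximal flag}, and the rest is a direct analogue of King's argument. Note that the map is only a surjection, not a bijection, since rescaling the weights or choosing different splittings produces different $1$-PS realizing the same filtration.
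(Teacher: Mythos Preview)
Your proposal is correct and follows essentially the same approach as the paper's proof: both adapt King's weight-space decomposition argument from \cite{Ki94}, building the filtration $M_x^{\geq n}$ from a $1$-PS whose limit exists, and for the converse choosing a splitting to produce a $1$-PS from a given strict filtration. Your treatment is in fact more explicit than the paper's at the one nontrivial point---the equivalence between the limit lying in $\rsf$ (injectivity of the graded $\beta$-maps) and strictness of the filtration---which the paper simply asserts in one line.
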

\begin{proof}
Let $\lambda\colon \mathbb{K}^*\ra G$ be an one-parameter subgroup of $G$ such that the 
$\lim_{t\rightarrow 0} \lambda(t)\cdot x = y$ exist in $\rsf$. Following \cite{Ki94}, 
we have a weight space decomposition 
$$
M_x(w) = \bigoplus_{n\in \Z} M_x(w)^n
$$
for each vertex $w\in (\QAl)_0$, where $\lambda(t)$ acts on the weight space $M_x(w)^n$ 
as multiplication by
$t^n$. For each arrow $a$, we have $M_x(a) = \bigoplus M_x(a)^{mn}$, where 
$$
M_x(a)^{mn}\colon M_x(s(a))^n\ra M_x(t(a))^m\,.
$$
Moreover, we have 
$$
\lambda(t)\cdot x = \bigoplus t^{m-n}M_x(a)^{mn}\,.
$$
Since the limit $\lim_{t\rightarrow 0} \lambda(t)\cdot x$ exist, we have 
$M_x(a)^{mn} = 0$ for all $m < n$. Let 
$$
M_x(w)^{\geq n} := \bigoplus_{m\geq n} M_x(w)^m\,.
$$
Then, $M(a)$ gives a map $M_x(s(a))^{\geq n}\ra M_x(t(a))^{\geq n}$ for all $n$. 
These subspaces determine subrepresentations $M_{x_n}$ of $M_x$ for all $n$. 
Since $y\in \rsf$, it follows that the corresponding filtration
$$
\cdots \supseteq M_n \supseteq M_{n+1} \supseteq \cdots
$$
where $M_n = M_x$ for $n \ll 0$ and $M_n = 0$ for $n \gg 0$, is strict filtration 
of $M_x$ in $\catfilt$. We also have 
$$
M_y \cong \bigoplus_{n\in \Z} M_n/M_{n+1}\,. 
$$

Conversely, suppose we have an strict filtration 
$$
\cdots \supseteq M_n \supseteq M_{n+1} \supseteq \cdots
$$
of $M_x$. This will determine a one-parameter subgroup $\lambda$ 
(by reversing the proof of first part or using the weight space decomposition at 
each vertex) such that the $\lim_{t\rightarrow 0} \lambda(t)\cdot x = y$ exist 
and $M_y \cong \bigoplus_{n\in \Z} M_n/M_{n+1} \in \catfilt$.
\end{proof}

Now onwards, we fix $\Theta\in \Gamma$ as in the Proposition \ref{abelian-condition}. 
Using Hilbert-Mumford criterion, we have the following:

\begin{proposition}\label{prop:semistab}
A point $x\in \rsf$ is $\chi_{\theta}$-semistable (respectively, $\chi_{\theta}$-stable) 
if and only if the corresponding object $M_x\in \catfilt$ is 
$\mu_{\Theta, \mathrm{rk}}$-semistable (respectively, $\mu_{\Theta, \mathrm{rk}}$-stable).
\end{proposition}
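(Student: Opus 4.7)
The plan is to prove Proposition \ref{prop:semistab} by invoking the Hilbert--Mumford numerical criterion for the $G$-action on $\rsf$ linearized by $\chi_\theta$, and to use Proposition \ref{prop-1-ps-filtration} to convert the 1-PS data into strict filtration data in $\catfilt$. The overall structure is parallel to King's proof of \cite[Proposition 3.1]{Ki94}, with $\catchain$ (arbitrary filtrations) replaced by $\catfilt$ (strict filtrations).

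First, I would set up Hilbert--Mumford. A point $x \in \rsf$ is $\chi_\theta$-semistable (resp. $\chi_\theta$-stable) iff for every one-parameter subgroup $\lambda \colon \mathbb{K}^* \to G$ for which $\lim_{t \to 0}\lambda(t)\cdot x$ exists in $\rsf$, the weight $\langle \chi_\theta,\lambda\rangle$ is $\leq 0$ (resp. $< 0$ for all $\lambda$ not factoring through $\Delta$). The essential point is that we work on $\rsf$, so the admissible 1-PS's are precisely those with limit in the filtered locus.

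Next, I would compute the pairing in terms of filtration data. Given such a $\lambda$, decompose each $M_x(w) = \bigoplus_{n \in \Z} M_x(w)^n$ into $\lambda$-weight spaces; by Proposition \ref{prop-1-ps-filtration}, the flag $M_x^{\geq n} := \bigoplus_{m \geq n} M_x(-)^m$ is a strict filtration of $M_x$ in $\catfilt$, and every strict filtration arises this way. A routine calculation (exactly as in \cite[Proof of Prop.~3.1]{Ki94}) and the definition $\theta_w = \Theta(\dv)r_w - \mathrm{rk}(\dv)\Theta_w$ from \eqref{eq-chi-theta} yield
$$
\langle \chi_\theta,\lambda\rangle \;=\; \sum_{w \in (\QAl)_0} \theta_w \sum_n n\,\dim M_x(w)^n \;=\; -\sum_{n \in \Z} \theta\bigl(\dv(M_x^{\geq n})\bigr).
$$

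I would then translate the inequality. The HM condition $\langle \chi_\theta,\lambda\rangle \leq 0$ over all admissible $\lambda$ becomes $\sum_n \theta(\dv(M_n)) \geq 0$ over all strict filtrations $(M_n)$ of $M_x$ in $\catfilt$. Specializing to two-step strict filtrations $0 \subset M' \subset M_x$ immediately gives $\theta(\dv(M')) \geq 0$ for every strict subobject $M' \subset M_x$; conversely, additivity of $\theta$ shows that if every strict subobject has $\theta \geq 0$, then the sum over any strict filtration is $\geq 0$. Finally, by the very definition of $\theta$ (as already used in the Proposition preceding \ref{abelian-condition}), the condition $\theta(\dv(M')) \geq 0$ for every strict subobject is equivalent to $\mu_{\Theta,\mathrm{rk}}(M') \leq \mu_{\Theta,\mathrm{rk}}(M_x) = \mu$, i.e.\ to $\mu_{\Theta,\mathrm{rk}}$-semistability of $M_x$. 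The stable case is identical with strict inequalities, noting that the trivial (diagonal) 1-PS is modded out in the definition of $G$.

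The main obstacle is ensuring that only \emph{strict} filtrations enter the argument on the right-hand side, i.e.\ that the Hilbert--Mumford analysis for $\rsf$ (rather than $\rsc$) genuinely restricts to 1-PS's whose induced flag is a strict filtration in $\catfilt$, not merely a filtration in $\catchain$. This is precisely the content of Proposition \ref{prop-1-ps-filtration}, so once it is invoked the rest is King's computation verbatim; the remaining bookkeeping (sign of the pairing, and the telescoping from filtrations to two-step subobjects) is formal.
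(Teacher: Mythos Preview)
Your approach is in the spirit of the paper's one-line hint ``using Hilbert--Mumford'', but there is a genuine gap hiding in the step you call ``the essential point''. You assert that $\chi_\theta$-semistability of $x\in\rsf$ is detected by the numerical criterion over precisely those one-parameter subgroups whose limit lies in $\rsf$. But $\rsf$ is only quasi-affine (a $G$-invariant open in the affine $\rsc$), and King's Hilbert--Mumford criterion \cite[Prop.~2.5]{Ki94} is stated for affine $G$-varieties; restricting to 1-PS's with limit in the open subset is in general too permissive a test. The notion of $\chi_\theta$-semistability relevant here is the one inherited from $\rsc$, where the criterion involves \emph{all} 1-PS's, hence via King's dictionary \emph{all} subobjects, not only the strict ones.

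This surfaces concretely at your last step. You pass from ``$\theta(\dv(M'))\geq 0$ for every \emph{strict} subobject $M'\subset M_x$'' to $\mu_{\Theta,\mathrm{rk}}$-semistability of $M_x$; but the paper defines the latter with respect to \emph{all} subrepresentations, and by Proposition~\ref{prop: stable kernels} these coincide with all subobjects of $\iota(M_x)$ in $\catchain$. The gap can be closed, but it needs an argument you do not give: saturate an arbitrary subobject $M'\subset M_x$ to the smallest strict subobject $M''\supset M'$ (so that $M''/M'$ lies in the torsion class $\mathcal{T}$), observe that $M'\to M''$ is an epi-monic in $\catfilt$, and invoke the standing hypothesis of Proposition~\ref{abelian-condition} (slope non-decreasing along epi-monics) to conclude $\mu_{\Theta,\mathrm{rk}}(M')\leq\mu_{\Theta,\mathrm{rk}}(M'')\leq\mu_{\Theta,\mathrm{rk}}(M_x)$. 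Note that your identification of the ``main obstacle'' is inverted: the difficulty is not ensuring that only strict filtrations enter, but rather that restricting to strict subobjects still suffices. The paper's route avoids all of this: combine King's result on $\rsc$ (the proposition citing \cite[Prop.~3.1]{Ki94}) with the already-proved equivalence between $\mu_{\Theta,\mathrm{rk}}$-semistability of $M\in\catfilt$ and $\theta$-semistability of $\iota(M)\in\catchain$. Proposition~\ref{prop-1-ps-filtration} is not actually needed for Proposition~\ref{prop:semistab}; its role is in the subsequent description of closed orbits and $S$-equivalence (Propositions~\ref{prop-closed-orbits} and~\ref{prop:S-eq}).
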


\begin{proposition}\label{prop-closed-orbits}
Let $x\in \rsf^{\mathrm{ss}}$. Then, the orbit of $x$ in $\rsf^{\mathrm{ss}}$ is 
closed if and only if the corresponding filtered representation $M_x$ is isomorphic 
to $\mathrm{gr^{max}}(M)$ in $\catfilt$.
\end{proposition}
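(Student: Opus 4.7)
The plan is to adapt King's classical argument \cite{Ki94} to the filtered/quasi-abelian setting, using Proposition \ref{prop-1-ps-filtration} as the main bridge between one-parameter subgroups and strict filtrations. Throughout, I use that $\Theta$ has been fixed as in Proposition \ref{abelian-condition}, so that $\catfilt(\mu)$ is a quasi-abelian subcategory closed under the relevant strict operations, and that $\rsf^{\mathrm{ss}}$ corresponds to $\mu_{\Theta,\mathrm{rk}}$-semistables of slope $\mu$ via Proposition \ref{prop:semistab}.

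For the forward direction, I assume $G\cdot x$ is closed in $\rsf^{\mathrm{ss}}$. I consider the (unique) maximal flag of $M_x$ in $\catfilt$, viewed inside $\catfilt(\mu)$: since $M_x$ is $\mu_{\Theta,\mathrm{rk}}$-semistable of slope $\mu$ and $\catfilt(\mu)$ is closed under strict sub-objects and strict quotients, each successive quotient $M_i/M_{i-1}$ is semistable of slope $\mu$ and, having no proper nonzero strict sub-object, is $\mu_{\Theta,\mathrm{rk}}$-stable. Applying Proposition \ref{prop-1-ps-filtration} to this strict filtration produces a one-parameter subgroup $\lambda$ with $y:=\lim_{t\to 0}\lambda(t)\cdot x$ existing in $\rsf$ and $M_y\cong \mathrm{gr^{max}}(M_x)$. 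Since $M_y$ is a direct sum of stables of slope $\mu$, it is semistable of slope $\mu$, hence $y\in \rsf^{\mathrm{ss}}$ by Proposition \ref{prop:semistab}. Closedness of the orbit forces $y\in G\cdot x$, which gives $M_x\cong M_y\cong \mathrm{gr^{max}}(M_x)$.

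For the backward direction, I assume $M_x\cong \mathrm{gr^{max}}(M)$ for some $M$; equivalently $M_x$ is a direct sum of strict-simple objects $S_1,\dots,S_k$ in $\catfilt(\mu)$. I take any point $y\in \overline{G\cdot x}\cap \rsf^{\mathrm{ss}}$. By Hilbert--Mumford (equivalently, by choosing a 1-PS along which $G\cdot x$ limits to the closed orbit in its closure), there is a one-parameter subgroup $\lambda$ with $\lim_{t\to 0}\lambda(t)\cdot x=y$. Proposition \ref{prop-1-ps-filtration} then yields a strict filtration of $M_x$ in $\catfilt$ whose associated graded is $\cong M_y$, and the filtration lies in $\catfilt(\mu)$ since $y\in \rsf^{\mathrm{ss}}$. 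Because every strict sub-object of a direct sum of strict-simples in $\catfilt(\mu)$ is a direct summand, the filtration splits and the associated graded is $\cong M_x$. Hence $M_y\cong M_x$, i.e.\ $y\in G\cdot x$, so the orbit is closed.

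The main obstacle is the quasi-abelian subtlety underlying the backward direction: I must verify that a direct sum of strict-simple objects in $\catfilt(\mu)$ is \emph{semisimple} in the sense that strict sub-objects split off, so that every strict filtration degenerates back to the same graded object. This is the analogue of Schur's lemma plus semisimplicity for direct sums of simples in an abelian category, and here relies on the quasi-abelian structure of $\catfilt(\mu)$ established in Proposition \ref{abelian-condition}, together with Proposition \ref{prop: stable kernels} (stability under sub-objects) and the fact that strict morphisms between stable objects of the same slope are either zero or strict isomorphisms, as in \cite[\S 1.2--1.3]{An09}.
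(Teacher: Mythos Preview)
Your argument is correct and follows the same route as the paper: both directions hinge on Proposition~\ref{prop-1-ps-filtration} (the bridge between one-parameter subgroups with limit in $\rsf$ and strict filtrations) together with \cite[Proposition~2.6]{Ki94}. The paper is considerably terser---its converse is simply declared ``immediate'' from those two ingredients---so the quasi-abelian semisimplicity step you single out as the main obstacle is something the paper also leaves implicit; your appeal to the Schur-type statements in \cite{An09} is a reasonable way to justify it. One small wording issue in your backward direction: you should take $y$ in the unique closed orbit inside $\overline{G\cdot x}\cap\rsf^{\mathrm{ss}}$ rather than ``any'' $y$ there, since arbitrary closure points need not be one-parameter limits (your parenthetical already shows you are aware of this).
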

\begin{proof}
Suppose that the orbit of $x$ in $\rsf^{\mathrm{ss}}$ is closed. If the corresponding 
filtered representation $M_x$ does not admit any strict filtration in $\catfilt$, then 
we are done. If there exists a strict filtration of $M_x$, then it also admits a maximal 
flag. By Proposition \ref{prop-1-ps-filtration} and \cite[Proposition 2.6]{Ki94}, we can 
conclude that $M_x$ is isomorphic to the associated graded of a maximal flag of $M_x$. 
This proves that $M_x$ is isomorphic to $\mathrm{gr^{max}}(M)$. The converse is immediate 
from Proposition \ref{prop-1-ps-filtration} and \cite[Proposition 2.6]{Ki94}.
\end{proof}

\begin{proposition} \label{prop:S-eq}
Two points $x, y\in \rsf^{\mathrm{ss}}$ are GIT equivalent, i.e., 
$$
\overline{Gx}\cap \overline{Gy}\cap \rsf^{\mathrm{ss}} \neq \emptyset
$$
if and only the corresponding
objects $M_x$ and $M_y$ are $S$-equivalent in $\catfilt$.
\end{proposition}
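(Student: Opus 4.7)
The plan is to adapt the GIT argument of \cite[Proposition 3.2]{Ki94} to the quasi-abelian setting, using Proposition \ref{prop-1-ps-filtration} to translate between 1-PS of $G$ with limit in $\rsf$ and strict filtrations in $\catfilt$, and Proposition \ref{prop-closed-orbits} to identify closed orbits inside $\rsf^{\mathrm{ss}}$. The central construction is a canonical one-parameter degeneration of every semistable orbit to a distinguished closed orbit, uniquely determined up to $G$-conjugacy by $\mathrm{gr^{max}}(M_x)$.

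First I would construct, for every $x \in \rsf^{\mathrm{ss}}$, a canonical closed-orbit representative $x^* \in \overline{Gx} \cap \rsf^{\mathrm{ss}}$. The (unique) maximal flag of $M_x$ supplies, via Proposition \ref{prop-1-ps-filtration}, a 1-PS $\lambda$ of $G$ such that $x^* := \lim_{t\to 0}\lambda(t)\cdot x$ exists in $\rsf$ with $M_{x^*} \cong \mathrm{gr^{max}}(M_x)$. Semistability of $x^*$ follows by a Hilbert--Mumford computation: the Mumford weight $\chi_\theta(\lambda)$ reduces to a signed sum of the $\theta$-degrees $\theta(M_i)$ of the terms of the flag, and each $M_i$ lies in $\catfilt(\mu)$ so $\theta(M_i) = 0$. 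Hence $x^* \in \rsf^{\mathrm{ss}}$. Moreover, a direct sum of flag-simple objects is its own maximal-flag associated graded, so $\mathrm{gr^{max}}(M_{x^*}) \cong M_{x^*}$, and Proposition \ref{prop-closed-orbits} then yields that $Gx^*$ is closed in $\rsf^{\mathrm{ss}}$.

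The $\Leftarrow$ direction is then immediate: if $M_x$ and $M_y$ are $S$-equivalent then $M_{x^*} \cong \mathrm{gr^{max}}(M_x) \cong \mathrm{gr^{max}}(M_y) \cong M_{y^*}$, so $x^*$ and $y^*$ lie in a common $G$-orbit that sits inside $\overline{Gx} \cap \overline{Gy} \cap \rsf^{\mathrm{ss}}$. For $\Rightarrow$, pick any $z$ in the given intersection and apply the preceding construction to $z$ to obtain a closed orbit $Gz^*$ in $\overline{Gz} \cap \rsf^{\mathrm{ss}} \subseteq \overline{Gx} \cap \overline{Gy} \cap \rsf^{\mathrm{ss}}$. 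Since $\pi_{\mathrm{rel}}$ of \cite{Ki94} is a good quotient of $\rsc^{\mathrm{ss}}$ and $\rsf^{\mathrm{ss}}$ is an invariant open subset, the standard fact that every orbit closure in the semistable locus of a reductive action contains a unique closed orbit (applied within the invariant open $\rsf^{\mathrm{ss}}$) forces $Gx^* = Gz^* = Gy^*$, whence $\mathrm{gr^{max}}(M_x) \cong \mathrm{gr^{max}}(M_y)$.

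The main obstacle is that $\rsf^{\mathrm{ss}}$ is merely an open subset of $\rsc^{\mathrm{ss}}$: the unique closed orbit in $\overline{Gx} \subseteq \rsc^{\mathrm{ss}}$ for King's GIT corresponds to a Jordan--H\"older graded in $\catchain$, which can genuinely leave $\rsf^{\mathrm{ss}}$ (this is exactly the phenomenon recorded in the Remark following Corollary \ref{cor: gr comm}). Hence one cannot simply pull back the $S$-equivalence from $\catchain$; the argument must produce a closed orbit intrinsically inside $\rsf^{\mathrm{ss}}$ via the maximal flag in $\catfilt$ and then verify uniqueness within this smaller invariant open set. This is what the $x \mapsto x^*$ construction accomplishes, and it is the crux of both directions of the proof.
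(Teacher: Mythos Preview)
Your proposal is correct and follows essentially the same strategy as the paper: both directions hinge on Proposition~\ref{prop-1-ps-filtration} (to pass between one-parameter subgroups with limit in $\rsf$ and strict filtrations in $\catfilt$), Proposition~\ref{prop-closed-orbits} (to identify closed orbits in $\rsf^{\mathrm{ss}}$ with objects isomorphic to their own $\mathrm{gr^{max}}$), and the standard GIT fact encoded in \cite[Proposition~2.6]{Ki94}. Your explicit $x\mapsto x^*$ construction, the verification that $\langle\chi_\theta,\lambda\rangle=0$ so that $x^*$ stays in $\rsf^{\mathrm{ss}}$, and the uniqueness-of-closed-orbit argument simply unpack what the paper compresses into a direct citation of \cite{Ki94}; the only cosmetic difference is that for the forward direction the paper invokes \cite[Proposition~2.6]{Ki94} to produce $\lambda_1,\lambda_2$ landing in a common closed orbit, whereas you route through an auxiliary point $z$ in the intersection before invoking uniqueness.
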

\begin{proof}
By \cite[Proposition 2.6]{Ki94}, if $x$ and $y$ are GIT equivalent, then there exist 
one-parameter subgroups $\lambda_1$ and $\lambda_2$ such that the integral pairing 
$\langle \chi_{\theta}, \lambda_1 \rangle = \langle \chi_{\theta}, \lambda_2 \rangle = 0 $ 
and $\lim_{t\ra 0} \lambda_1(t)\cdot x$ and $\lim_{t\ra 0} \lambda_2(t)\cdot y$ are in 
the same closed $G$-orbit in $\rsf^{\mathrm{ss}}$. Since $\lim_{t\ra 0} \lambda_1(t)\cdot x$ 
corresponds to the associated graded of a filtration in $\catfilt$, it follows that $M_x$ 
and $M_y$ have isomorphic maximal filtrations and hence they are $S$-equivalent. 

Conversely, if $M_x$ and $M_y$ are $S$-equivalent, then by Proposition 
\ref{prop-1-ps-filtration} there exist one-parameter subgroups $\lambda_1$ and 
$\lambda_2$ such that $\lim_{t\ra 0} \lambda_1(t)\cdot x$ and 
$\lim_{t\ra 0} \lambda_2(t)\cdot y$ are in the same closed $G$-orbit in $\rsf^{\mathrm{ss}}$.
\end{proof}

The proof of the following Proposition is inspired from \cite[Theorem 4.5]{AK07}.

\begin{proposition} \label{prop:localiso}
There is a local isomorphism $\tilde{h}_\mathrm{fil} \colon 
\fp{\rsf^{\mathrm{ss}}}/\fp{G} \ra \mff(\dv)$.
\end{proposition}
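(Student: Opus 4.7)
The plan is to adapt the standard frame-bundle construction used in \cite[Theorem 4.5]{AK07} (cf.\ \cite[Proposition 5.2]{Ki94}) to the filtered setting. First, I would define a natural transformation $h_\mathrm{fil}\colon \fp{\rsf^{\mathrm{ss}}} \to \mff(\dv)$ by assigning to a morphism $f\colon S \to \rsf^{\mathrm{ss}}$ the pullback $f^*\mathbb{M}_\mathrm{fil}$, where $\mathbb{M}_\mathrm{fil}$ is the restriction to $\rsf^{\mathrm{ss}}$ of the universal family on $\rs^{\mathrm{ss}}$. Since the $G$-action corresponds to simultaneous change of basis at each vertex and leaves the isomorphism class of the family invariant, $h_\mathrm{fil}$ descends to a morphism of presheaves $\tilde{h}_\mathrm{fil}\colon \fp{\rsf^{\mathrm{ss}}}/\fp{G} \to \mff(\dv)$.

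To show $\tilde{h}_\mathrm{fil}$ is a local isomorphism, I would verify local surjectivity and local injectivity on the Zariski site. For local surjectivity, let $\sheaf{F}$ be a flat family of semistable filtered representations over a scheme $S$. Each vertex component $\sheaf{F}(w)$ is a locally free sheaf of rank $d_w$, so on a sufficiently fine affine open cover $\{U_i\}$ one may simultaneously trivialise $\sheaf{F}(w)|_{U_i} \simeq \struct{U_i}^{d_w}$ for every $w \in (\QAl)_0$. Expressing the representation data as matrices then yields morphisms $f_i\colon U_i \to \rs$. The image lies in $\rsc$ because $\sheaf{F}$ satisfies the relations in $I$, and lies in the open subset $\rsf$ because flatness forces the $\beta_j^v$-arrows to remain fibrewise injective. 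Semistability of each fibre together with Proposition \ref{prop:semistab} places $f_i$ in $\rsf^{\mathrm{ss}}$, and by construction $f_i^*\mathbb{M}_\mathrm{fil} \simeq \sheaf{F}|_{U_i}$.

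For local injectivity, suppose $f_1, f_2\colon S \to \rsf^{\mathrm{ss}}$ satisfy $f_1^*\mathbb{M}_\mathrm{fil} \simeq f_2^*\mathbb{M}_\mathrm{fil}$. The $S$-sheaf of isomorphisms between these two pullbacks carries a natural free $G$-action, realising it as a $G$-torsor over $S$. Since $G(\QAl)$ is a product of general linear groups, such torsors are Zariski-locally trivial, and this persists after quotienting by the central subgroup $\Delta$. On any cover trivialising the torsor, $f_1$ and $f_2$ become $G$-equivalent, hence determine the same element in the Zariski sheafification of $\fp{\rsf^{\mathrm{ss}}}/\fp{G}$.

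The step I expect to require the most care is local surjectivity, specifically the passage from a flat family of filtered representations to a morphism into the open locus $\rsf$ rather than merely $\rsc$. This hinges on unpacking the definition of flatness for filtered families and ensuring that fibrewise injectivity of the inclusion maps $\beta_j^v$, together with local freeness of the cokernels $\sheaf{F}(\ell, v)/\sheaf{F}(j, v)$ at each step, is preserved under local trivialisation. Once this is in place the rest of the argument is formal and parallels \cite[Theorem 4.5]{AK07} verbatim.
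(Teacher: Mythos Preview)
Your argument is correct, but it takes a different route from the paper. You verify directly that $\tilde{h}_\mathrm{fil}$ is a local isomorphism by the standard frame-bundle mechanism: local trivialisation of the vertex bundles gives local surjectivity, and Zariski-local triviality of $G$-torsors gives local injectivity. The paper instead argues by base change: it shows that the square
\[
\xymatrix{
\fp{\rsf^{\mathrm{ss}}} \ar[r] \ar[d]_{h_\mathrm{fil}} & \fp{\rsc^{\mathrm{ss}}} \ar[d]^{h_\mathrm{rel}} \\
\mff(\dv) \ar[r] & \mfc(\dv)
}
\]
is Cartesian (essentially because being in $\rsf$ rather than $\rsc$ is detected purely by the isomorphism class of the family), passes to the induced Cartesian square on $G$-quotients, and then uses that $\tilde{h}_\mathrm{rel}$ is already known to be a local isomorphism by \cite[Proposition 5.2]{Ki94}. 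The paper's approach is shorter and makes the relationship between the filtered and relative moduli functors transparent; your approach is more self-contained and avoids the fibre-product formalism, at the cost of re-running King's argument inside the open locus $\rsf^{\mathrm{ss}}$. One small imprecision: in your local surjectivity step, it is not flatness per se that forces the $\beta_j^v$ to be fibrewise injective, but rather the hypothesis that $\sheaf{F}$ is a family of objects of $\catfilt$ (as opposed to $\catchain$); you should phrase it that way.
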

\begin{proof}
Let $f\colon S\ra \rsf^{\mathrm{ss}}$ be a morphism of $\mathbb{K}$-schemes, and let $\mathbb{M}_\mathrm{fil}$ be 
the restriction of the tautological family to $\rsf^{\mathrm{ss}}$. We define
$$
h_\mathrm{fil}\colon \fp{\rsf^{\mathrm{ss}}}\ra \mff(\dv)\; ~\quad \mbox{by}\;
(f\colon S\ra \rsf^{\mathrm{ss}})\mapsto [f^* \mathbb{M}_\mathrm{fil}]
$$

We have the following commutative diagram
\[
\xymatrix{
\fp{\rsf^{\mathrm{ss}}} \ar[r]^\iota \ar[d]_{h_\mathrm{fil}} & \fp{\rsc^{\mathrm{ss}}} 
\ar[d]^{h_\mathrm{rel}} \ar[dr]^\pi & \\
\mff(\dv) \ar[r]_j & \mfc(\dv) \ar[r]_{\tilde{h}_\mathrm{rel}} & \fp{\rsc^{\mathrm{ss}}}/\fp{G}
}
\]
For any $\mathbb{K}$-scheme $S$, the map $\fp{\rsf^{\mathrm{ss}}}(S) \ra
 \mff(\dv)(S)\times_{\mfc(\dv)(S)} \fp{\rsc^{\mathrm{ss}}}(S)$
defined by
$$
(f\colon S\ra \rsf^{\mathrm{ss}}) \mapsto (h_\mathrm{fil}^S(f), \iota \circ f)
$$
is a bijection. Hence, the square in the above diagram is Cartesian. This will induces 
the following Cartesian diagram
\[
\xymatrix{
\fp{\rsf^{\mathrm{ss}}}/\fp{G} \ar[r] \ar[d]_{\tilde{h}_\mathrm{fil}} & 
\fp{\rsc^{\mathrm{ss}}}/\fp{G} 
\ar[d]^{\tilde{h}_\mathrm{rel}} \\
\mff(\dv) \ar[r]_j & \mfc(\dv) 
} 
\] 
This proves that $\tilde{h}_\mathrm{fil} \colon 
\fp{\rsf^{\mathrm{ss}}}/\fp{G} \ra \mff(\dv)$
is a local isomorphism.
\end{proof}

Now, we can state and prove our main result. 
%Assume that $\Theta \in \Gamma$ be a stability parameter. 

\begin{theorem}\label{main-theorem}
There exist a quasi-projective variety $\msf(\dimv{d})$ (respectively, $\mstf(\dimv{d}))$
which co-represents the moduli functor $\mff$ 
(respectively, $\mathfrak{M}_{\mathrm{fil}}^\mathrm{s}$).
\begin{enumerate}
\item The closed points of the moduli space $\msf(\dimv{d})$ (respectively, $\mstf(\dimv{d})$) 
correspond to the $S$-equivalence classes of $\mu_{\Theta, \mathrm{rk}}$-semistable 
(respectively, $\mu_{\Theta, \mathrm{rk}}$-stable) objects of $\catfilt (\mu)$ which have 
dimension vector $\dimv{d}$. 
\item There exists a canonical morphism $\phi \colon \msf(\dv)\ra \msc(\dv)$.
\end{enumerate}
\end{theorem}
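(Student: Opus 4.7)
The plan is to use the local isomorphism of Proposition \ref{prop:localiso} to reduce co-representability of $\mff$ and $\mtff$ to constructing a good (respectively, geometric) quotient of $\rsf^{\mathrm{ss}}$ (respectively, $\rsf^{\mathrm{s}}$) by $G$ as a quasi-projective variety. By the standard transfer of co-representability through local isomorphisms recalled after the definition of co-representability, any such quotient scheme automatically co-represents the moduli functor, and the identification of closed points in part (1) will then follow by combining Propositions \ref{prop-closed-orbits} and \ref{prop:S-eq}.

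For the stable locus the argument is routine: any $x \in \rsf^{\mathrm{s}}$ has closed $G$-orbit in $\rsc^{\mathrm{ss}}$, so $\rsf^{\mathrm{s}}$ is saturated in $\rsc^{\mathrm{s}}$ with respect to the geometric restriction of $\pi_{\mathrm{rel}}$, and I obtain $\mstf(\dv) := \rsf^{\mathrm{s}}/G$ as an open subscheme of the quasi-projective variety $\mstc(\dv)$. The semistable case is the main obstacle, because $\rsf^{\mathrm{ss}}$ is typically \emph{not} saturated in $\rsc^{\mathrm{ss}}$: the closed orbits in $\rsc^{\mathrm{ss}}$ correspond to semisimple objects of $\catchain$ in which every $\beta$-arrow vanishes, and such representations lie outside $\rsf^{\mathrm{ss}}$ whenever some vertex $(j,v)$ with $j < \ell$ has positive dimension. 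Consequently $\msf(\dv)$ cannot be obtained as the image of $\pi_{\mathrm{rel}}$, and the canonical morphism $\phi$ will in general fail to be an embedding.

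To circumvent this, I would perform GIT directly on the quasi-affine $G$-variety $\rsf$ with the linearization $\chi_\theta$, mimicking King's construction locally: cover $\msc(\dv)$ by the affine opens $D_+(f)$ arising from $\chi_\theta$-semi-invariants $f \in \mathbb{K}[\rsc]^{G,\chi_\theta^n}$, so that the preimages $\{f \neq 0\} \subseteq \rsc^{\mathrm{ss}}$ are $G$-invariant affines with good quotients $\{f \neq 0\}/G \simeq D_+(f)$. On each such piece the $G$-invariant open $\rsf \cap \{f \neq 0\}$ (whose complement in $\{f \neq 0\}$ is cut out by the determinantal conditions defining $\rsc \setminus \rsf$) admits a good quotient via $G$-invariants of the appropriate localization, and these local quotients glue along overlaps into a quasi-projective variety $\msf(\dv)$ equipped with a good quotient morphism $\rsf^{\mathrm{ss}} \to \msf(\dv)$. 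By Proposition \ref{prop-closed-orbits} the closed orbits in $\rsf^{\mathrm{ss}}$ are exactly $\mathrm{gr^{max}}(M)$ for filtered representations $M$, and by Proposition \ref{prop:S-eq} two closed orbits coincide precisely when the corresponding filtered representations are $S$-equivalent in $\catfilt(\mu)$, giving part (1).

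For part (2), the composition $\rsf^{\mathrm{ss}} \hookrightarrow \rsc^{\mathrm{ss}} \xrightarrow{\pi_{\mathrm{rel}}} \msc(\dv)$ is $G$-invariant, so the universal property of the good quotient $\rsf^{\mathrm{ss}} \to \msf(\dv)$ yields a unique canonical morphism $\phi \colon \msf(\dv) \to \msc(\dv)$. On closed points $\phi$ sends the $S$-equivalence class of $M \in \catfilt(\mu)$ to the $S$-equivalence class of $\iota(M) \in \catchain$, which is well-defined by Corollary \ref{cor: gr comm}; this $\phi$ is not injective in general, consistently with the remark following that corollary.
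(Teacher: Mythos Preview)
Your overall strategy coincides with the paper's: reduce to the existence of a good quotient of $\rsf^{\mathrm{ss}}$ via Proposition~\ref{prop:localiso}, describe closed points via Propositions~\ref{prop-closed-orbits} and~\ref{prop:S-eq}, and obtain $\phi$ by the universal property of categorical quotients. Your treatment of the stable locus and of part~(2) is fine, and your observation that $\rsf^{\mathrm{ss}}$ is not saturated in $\rsc^{\mathrm{ss}}$ is more explicit than anything the paper spells out. The gap is in your construction of the semistable quotient. The $G$-invariant opens $\rsf \cap \{f \neq 0\}$ are in general \emph{not} affine: the complement of $\rsf$ in $\rsc$ is the determinantal locus where some $\beta_j^v$ drops rank, of codimension $d_{(j+1,v)} - d_{(j,v)} + 1$, and removing a closed subset of codimension $\geq 2$ from an affine yields only a quasi-affine. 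There is then no coordinate ring to localize and hand to ``$G$-invariants of the appropriate localization''; for a reductive group acting on a merely quasi-affine variety the ring of invariants need not produce a good quotient (already the $\mathbb{G}_m$-action on $\mathbb{A}^2 \setminus \{0\}$ shows this). Nor do local good quotients of non-saturated opens glue automatically into a separated scheme carrying a global good-quotient map.

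The paper does not attempt any such local construction. It simply declares $\msf(\dv)$ to be ``the GIT quotient of $\rsf$ by $G$'' and asserts that this is a good quotient of $\rsf^{\mathrm{ss}}$, implicitly relying on Mumford's GIT for a reductive group acting on a quasi-projective scheme with an ample $G$-linearized line bundle (the structure sheaf on the quasi-affine $\rsf$, twisted by $\chi_\theta$, is ample). The honest fix for your argument is to invoke that general theorem directly rather than attempt the gluing, and then verify that Mumford's semistable locus in $\rsf$ agrees with the $\mu_{\Theta,\mathrm{rk}}$-semistable locus --- which is exactly what Proposition~\ref{prop:semistab} provides via the Hilbert--Mumford criterion.
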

\begin{proof} 
The moduli space $\msf(\dimv{d})$ is defined as the GIT quotient of $\rsf$ by $G$, hence 
it is a good quotient of $\rsf^{\mathrm{ss}}$ by $G$. In other words, the natural 
transformation  
$$
\fp{\rsf^{\mathrm{ss}}}/\fp{G} \ra \fp{\msf(\dimv{d})}
$$
corepresents the quotient functor $\fp{\rsf^{\mathrm{ss}}}/\fp{G}$. 
By \cite[Lemma 4.7]{AK07}, there exists a unique natural transformation 
$\eta\colon \mff(\dv)\ra \fp{\msf(\dimv{d})}$ such that the following diagram
\[
\xymatrix@!=3.5pc{
\fp{\rsf^{\mathrm{ss}}}/\fp{G} \ar[d]_{\tilde{h_\mathrm{fil}}} 
\ar[rd]^{\fp{\pi_{\mathrm{fil}}}} &\\
\mff(\dv) \ar[r]_{\eta} & \fp{\msf(\dimv{d})}
}
\]
commutes. This shows that $\msf(\dv)$ corepresents the moduli functor $\mff(\dv)$. 
By Proposition \ref{prop:S-eq}, it follows that the closed points of the moduli space 
$\msf(\dimv{d})$ correspond to the $S$-equivalence classes of objects of the category 
$\catfilt$. 

By the universal property of categorical quotients, we get a canonical morphism 
$$\phi \colon \msf(\dv)\ra \msc(\dv)$$
such that the following diagram
\[
\xymatrix{
\rsf \ar[r] \ar[d]_{\pi_{\mathrm{fil}}} & \rsc \ar[d]^{\pi_{\mathrm{rel}}} \\
\msf(\dv) \ar[r]_\phi & \msc(\dv)
}
\]
commutes.
\end{proof}

It should be notice here that there are two types in the description of closed points 
of $\msf(\dimv{d})$, which are as follows:
\begin{enumerate}
\item[(a)] A closed point of $\msf(\dimv{d})$ is said to be of \textbf{Type-1} if 
it correspond to the $S^\mathrm{JH}$-equivalence class of an object in $\catfilt(\mu)$.
\item[(b)] A closed point of $\msf(\dimv{d})$ is said to be of \textbf{Type-2} if 
it correspond to the $S$-equivalence class of an object in $\catfilt(\mu)$ which is 
not $S^\mathrm{JH}$-equivalence class.
\end{enumerate}

Let $R^{\mathrm{JH}}_\mathrm{fil}$ be a subscheme of $\rsf^\mathrm{ss}$ whose closed 
points are consisting of all those points $x$ for which the corresponding filtered 
representations have strict Jordan-H\"older filtrations.

\begin{proposition}
The scheme $R^{\mathrm{JH}}_\mathrm{fil}$ is a $G$-invariant  open subscheme of 
$\rsf^\mathrm{ss}$, and it admits a good quotient
$$
\pi_{\mathrm{JH}}\colon R^{\mathrm{JH}}_\mathrm{fil}\ra M_{\mathrm{fil}}^\mathrm{JH}(\dv)
$$ 
such that the following diagram is Cartesian:
\[
\xymatrix{
R^{\mathrm{JH}}_\mathrm{fil} \ar[r] \ar[d]_{\pi_{\mathrm{JH}}} & \rsc^\mathrm{ss} 
\ar[d]^{\pi_\mathrm{rel}} \\
M_{\mathrm{fil}}^\mathrm{JH}(\dv) \ar[r]_\eta & \msc(\dv)
}
\]
where $\eta$ is an open immersion.
\end{proposition}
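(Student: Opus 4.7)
The strategy is to realize $R^{\mathrm{JH}}_\mathrm{fil}$ as the saturated preimage, under $\pi_{\mathrm{rel}}$, of an open subset $U\subseteq \msc(\dv)$; once this identification is established, openness and $G$-invariance of $R^{\mathrm{JH}}_\mathrm{fil}$, the good quotient $\pi_{\mathrm{JH}}$, the Cartesian square, and the open immersion $\eta$ will all follow formally from the fact that a good quotient restricts to a good quotient over any open saturated subset. To construct $U$, observe that $\rsf^{\mathrm{ss}}$ is a $G$-invariant open subscheme of $\rsc^{\mathrm{ss}}$, so $Z := \rsc^{\mathrm{ss}}\setminus \rsf^{\mathrm{ss}}$ is closed and $G$-invariant; since $\pi_{\mathrm{rel}}$ is a good quotient, its image $\pi_{\mathrm{rel}}(Z)$ is closed in $\msc(\dv)$, and I set
$$
U := \msc(\dv)\setminus \pi_{\mathrm{rel}}(Z).
$$
By construction, $\pi_{\mathrm{rel}}^{-1}(U)$ is a $\pi_{\mathrm{rel}}$-saturated $G$-invariant open subscheme contained in $\rsf^{\mathrm{ss}}$.

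The key step is to prove $R^{\mathrm{JH}}_\mathrm{fil}=\pi_{\mathrm{rel}}^{-1}(U)$. A point $x$ lies in $\pi_{\mathrm{rel}}^{-1}(U)$ iff the unique closed $G$-orbit in $\overline{Gx}\cap \rsc^{\mathrm{ss}}$ is contained in $\rsf^{\mathrm{ss}}$; using the $S$-equivalence description of Section \ref{subsec-git}, this closed orbit corresponds to the polystable representative $\mathrm{gr}(\iota(M_x))$ of $\iota(M_x)$ in $\catchain$, so the condition becomes $\mathrm{gr}(\iota(M_x))\in \catfilt$, equivalently each $\theta$-stable Jordan-H\"older factor $S_i$ of $\iota(M_x)$ has injective $\beta$-maps. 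One direction is immediate: if $M_x$ admits a strict Jordan-H\"older filtration, then Proposition \ref{strict JH} together with Corollary \ref{cor: gr comm} gives $\mathrm{gr}(\iota(M_x))\simeq \iota(\mathrm{gr}(M_x))\in \catfilt$. For the converse, assuming each $S_i\in \catfilt$, I consider the Jordan-H\"older filtration $0\subset N_1\subset \cdots \subset N_k=\iota(M_x)$ in $\catchain$: each $N_i$ lies in $\catfilt$ by Proposition \ref{prop: stable kernels}, and a direct computation using the explicit cokernel formula in $\catfilt$ shows that the inclusion $N_{i-1}\hookrightarrow N_i$ is strict iff $N_i(j,v)\cap N_{i-1}(\ell,v)=N_{i-1}(j,v)$ for all $j,v$, which is precisely the condition that the $\beta$-maps of $S_i=N_i/N_{i-1}$ are injective. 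Under our assumption this holds, so the filtration is strict in $\catfilt$ with $\mu_{\Theta,\mathrm{rk}}$-stable quotients, exhibiting a strict Jordan-H\"older filtration of $M_x$.

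Granting the identification, the rest is formal: I set $M_{\mathrm{fil}}^{\mathrm{JH}}(\dv):=U$ and let $\pi_{\mathrm{JH}}$ be the restriction of $\pi_{\mathrm{rel}}$ to $R^{\mathrm{JH}}_\mathrm{fil}=\pi_{\mathrm{rel}}^{-1}(U)$, which is a good quotient onto $U$ by the saturation property of good quotients; the map $\eta:U\hookrightarrow \msc(\dv)$ is by definition an open immersion; and Cartesianness of the square is tautological, since $R^{\mathrm{JH}}_\mathrm{fil}=\pi_{\mathrm{rel}}^{-1}(U)=\rsc^{\mathrm{ss}}\times_{\msc(\dv)}U$. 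The main obstacle is the second paragraph: bridging the quasi-abelian categorical notion of strict Jordan-H\"older filtration in $\catfilt$ with the geometric condition that the closed $G$-orbit in $\overline{Gx}$ sits inside $\rsf^{\mathrm{ss}}$, which requires the cokernel-in-$\catfilt$ analysis outlined above.
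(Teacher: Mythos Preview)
Your proposal is correct and follows essentially the same approach as the paper: both define $M_{\mathrm{fil}}^{\mathrm{JH}}(\dv)$ as the complement of $\pi_{\mathrm{rel}}(\rsc^{\mathrm{ss}}\setminus\rsf^{\mathrm{ss}})$ in $\msc(\dv)$, identify $R^{\mathrm{JH}}_\mathrm{fil}$ with $\pi_{\mathrm{rel}}^{-1}(M_{\mathrm{fil}}^{\mathrm{JH}}(\dv))$, and then invoke the standard fact that a good quotient restricts to a good quotient over any saturated open subset. The paper merely asserts the identification $R^{\mathrm{JH}}_\mathrm{fil}=\pi_{\mathrm{rel}}^{-1}(M_{\mathrm{fil}}^{\mathrm{JH}}(\dv))$ as ``easy to see'', whereas you spell out the argument via closed orbits, Proposition~\ref{strict JH}, Corollary~\ref{cor: gr comm}, and the explicit strictness criterion for monics in $\catfilt$; this added detail is sound and fills a gap the paper leaves to the reader.
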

\begin{proof}
Since $\rsf^\mathrm{ss}$ is $G$-invariant open subset of $\rsc^\mathrm{ss}$, 
it follows that
$$
M_{\mathrm{fil}}^\mathrm{JH}(\dv):= 
\msc(\dv) - \pi_\mathrm{rel}(\rsc^\mathrm{ss} - \rsf^\mathrm{ss}) 
$$
is an open subset of $\msc(\dv)$. Then, it is easy to see that
$R^{\mathrm{JH}}_\mathrm{fil} = \pi_\mathrm{rel}^{-1}(M_{\mathrm{fil}}^\mathrm{JH}(\dv))$ 
is a $G$-invariant open subscheme of $\rsf^\mathrm{ss}$. Hence, we get a good quotient 
$\pi_{\mathrm{JH}}\colon R^{\mathrm{JH}}_\mathrm{fil}\ra M_{\mathrm{fil}}^\mathrm{JH}(\dv)$
which is the restriction of $\pi_{\mathrm{rel}}$ to an open subscheme $R^{\mathrm{JH}}_\mathrm{fil}$.
\end{proof}

\begin{proposition}\label{prop-JH-immersion}
There exist an open immersion $\sigma \colon M_{\mathrm{fil}}^\mathrm{JH}(\dv)\ra \msf(\dv)$ such that
the following diagram
\[
\xymatrix{
R^{\mathrm{JH}}_\mathrm{fil} \ar[r]^i \ar[d]_{\pi_{\mathrm{JH}}} & \rsf^\mathrm{ss}
\ar[d]_{\pi_\mathrm{fil}} \ar[r]^j & \rsc^\mathrm{ss} \ar[d]_{\pi_\mathrm{rel}} \\
M_{\mathrm{fil}}^\mathrm{JH}(\dv)  \ar[r]_\sigma \ar@/_2pc/[rr]_\eta & \msf(\dv) 
\ar[r]_\phi & \msc(\dv) 
}
\]
is commutative;
where $i$ and $j$ are open embeddings, $\phi$ is a canonical morphism as in the 
Theorem \ref{main-theorem}, and all three vertical morphisms are good quotients.
\end{proposition}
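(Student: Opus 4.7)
The plan is to construct $\sigma$ by exhibiting $M_{\mathrm{fil}}^{\mathrm{JH}}(\dv)$ as (canonically isomorphic to) an open subscheme of $\msf(\dv)$, and to recover this open subscheme as the preimage under $\phi$ of $M_{\mathrm{fil}}^{\mathrm{JH}}(\dv) \subset \msc(\dv)$. The key tool is the standard fact that if $\pi\colon X\to Y$ is a good quotient and $V\subseteq Y$ is open, then $\pi|_{\pi^{-1}(V)}\colon \pi^{-1}(V)\to V$ is again a good quotient, together with the uniqueness of good quotients up to canonical isomorphism.

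First, I would set $U:=\phi^{-1}(M_{\mathrm{fil}}^{\mathrm{JH}}(\dv))\subseteq \msf(\dv)$, which is open since $M_{\mathrm{fil}}^{\mathrm{JH}}(\dv)$ is open in $\msc(\dv)$ and $\phi$ is a morphism. Using the commutative square of Theorem \ref{main-theorem}, namely $\pi_{\mathrm{rel}}\circ j = \phi\circ \pi_{\mathrm{fil}}$ on $\rsf^{\mathrm{ss}}$, I would compute
\[
\pi_{\mathrm{fil}}^{-1}(U) \;=\; \pi_{\mathrm{fil}}^{-1}(\phi^{-1}(M_{\mathrm{fil}}^{\mathrm{JH}}(\dv))) \;=\; j^{-1}\bigl(\pi_{\mathrm{rel}}^{-1}(M_{\mathrm{fil}}^{\mathrm{JH}}(\dv))\bigr) \;=\; R^{\mathrm{JH}}_{\mathrm{fil}},
\]
where the last equality uses the defining identity $R^{\mathrm{JH}}_{\mathrm{fil}} = \pi_{\mathrm{rel}}^{-1}(M_{\mathrm{fil}}^{\mathrm{JH}}(\dv))$ from the previous Proposition together with the fact that $R^{\mathrm{JH}}_{\mathrm{fil}}\subseteq \rsf^{\mathrm{ss}}$.

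Next, since $R^{\mathrm{JH}}_{\mathrm{fil}}=\pi_{\mathrm{fil}}^{-1}(U)$ is saturated in $\rsf^{\mathrm{ss}}$, the restriction $\pi_{\mathrm{fil}}|_{R^{\mathrm{JH}}_{\mathrm{fil}}}\colon R^{\mathrm{JH}}_{\mathrm{fil}}\to U$ is a good quotient for the $G$-action. On the other hand, $\pi_{\mathrm{JH}}\colon R^{\mathrm{JH}}_{\mathrm{fil}}\to M_{\mathrm{fil}}^{\mathrm{JH}}(\dv)$ is also a good quotient by the previous Proposition. By uniqueness of good quotients, there is a canonical isomorphism $\tau\colon M_{\mathrm{fil}}^{\mathrm{JH}}(\dv)\xrightarrow{\sim} U$ satisfying $\tau\circ \pi_{\mathrm{JH}} = \pi_{\mathrm{fil}}\circ i$. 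I then define $\sigma\colon M_{\mathrm{fil}}^{\mathrm{JH}}(\dv)\to \msf(\dv)$ as the composition of $\tau$ with the open immersion $U\hookrightarrow \msf(\dv)$; this is tautologically an open immersion.

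Finally, I would check the commutativity of the stated diagram. The left square commutes by construction of $\sigma$ (via the identity $\tau\circ\pi_{\mathrm{JH}} = \pi_{\mathrm{fil}}\circ i$), the right square commutes by Theorem \ref{main-theorem}, and the bottom arc equals $\eta$ because $\phi\circ\sigma\circ\pi_{\mathrm{JH}} = \phi\circ\pi_{\mathrm{fil}}\circ i = \pi_{\mathrm{rel}}\circ j\circ i$, which descends via the good quotient $\pi_{\mathrm{JH}}$ to $\eta$ by definition. I expect the only subtle point to be the verification that $R^{\mathrm{JH}}_{\mathrm{fil}}$ is truly saturated for $\pi_{\mathrm{fil}}$, but this drops out of the identity $R^{\mathrm{JH}}_{\mathrm{fil}} = \pi_{\mathrm{fil}}^{-1}(U)$ established above, making the argument essentially formal once the commutative square of Theorem \ref{main-theorem} is in hand.
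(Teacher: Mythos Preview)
Your proof is correct and follows essentially the same approach as the paper. The paper's argument is extremely terse (``the existence of $\sigma$ and $\eta=\phi\circ\sigma$ follows from the universal properties; the proof of open immersion is similar to the case of $\eta$ as above''), and what you have written is a careful unpacking of exactly this: you use the universal property of good quotients (uniqueness up to canonical isomorphism) to produce $\sigma$, and you identify the target open subscheme explicitly as $U=\phi^{-1}(M_{\mathrm{fil}}^{\mathrm{JH}}(\dv))$, which is the natural analogue of the construction of $M_{\mathrm{fil}}^{\mathrm{JH}}(\dv)\subset\msc(\dv)$ in the preceding proposition.
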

\begin{proof}
The existence of $\sigma$ and $\eta = \phi \circ \sigma$ follows from the universal properties. The proof of open immersion is
similar to the case of $\eta$ as above.
\end{proof}
\begin{cor}\label{cor-open-immersion}
If $\mathrm{rk}$ is zero on torsion classes, then the canonical morphism 
$$\phi \colon \msf(\dv)\ra \msc(\dv)$$
is an open immersion. In this case, the closed points of $\msf(\dv)$ are precisely of 
\textbf{Type-1} which correspond to $S^{\mathrm{JH}}$-equivalence classes of objects 
of $\catfilt(\mu)$.
\end{cor}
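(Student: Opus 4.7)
The plan is to exploit the diagram constructed in Proposition \ref{prop-JH-immersion} together with the remark following Example \ref{ex:sink slope}, which observes that any rank function vanishing on the torsion subcategory $\mathcal{T}$ is necessarily of the form $\mathrm{rk}_s$. Under this hypothesis, Example \ref{ex:sink slope} asserts that (for $\Theta\in \Gamma$ chosen as in Proposition \ref{abelian-condition}) the category $\catfilt(\mu)$ is actually abelian with respect to $\mu_s$, so every $\mu_{\Theta,\mathrm{rk}}$-semistable filtered representation admits a strict Jordan--H\"older filtration, and the notions of $S^{\mathrm{JH}}$-equivalence and $S$-equivalence coincide.

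First I would verify that, under this hypothesis, the inclusion $R^{\mathrm{JH}}_\mathrm{fil}\hookrightarrow \rsf^\mathrm{ss}$ is the identity. By construction $R^{\mathrm{JH}}_\mathrm{fil}$ is a $G$-invariant open subscheme of $\rsf^\mathrm{ss}$ whose closed points are exactly those $x$ for which $M_x$ admits a strict Jordan--H\"older filtration in $\catfilt(\mu)$. Since $\catfilt(\mu)$ is abelian, every closed point of $\rsf^\mathrm{ss}$ lies in $R^{\mathrm{JH}}_\mathrm{fil}$; the complement $\rsf^\mathrm{ss}\setminus R^{\mathrm{JH}}_\mathrm{fil}$, being a closed subset of a variety with no closed points, is empty. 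Hence $R^{\mathrm{JH}}_\mathrm{fil} = \rsf^\mathrm{ss}$ as open subschemes.

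Passing to good quotients, the GIT quotient $\pi_\mathrm{JH}$ coincides with $\pi_\mathrm{fil}$, so $M^\mathrm{JH}_\mathrm{fil}(\dv) = \msf(\dv)$ and the open immersion $\sigma$ appearing in Proposition \ref{prop-JH-immersion} is an isomorphism. The commutativity $\eta = \phi\circ \sigma$ then forces $\phi = \eta\circ \sigma^{-1}$, which is an open immersion since $\eta$ was shown to be one. The statement about closed points follows for free: every closed point of $\msf(\dv)$ now lies in the image of $M^\mathrm{JH}_\mathrm{fil}(\dv)$, and so corresponds to an $S^\mathrm{JH}$-equivalence class in $\catfilt(\mu)$, i.e.\ is of Type-1.

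The main obstacle will be the step of upgrading the set-theoretic equality of closed points of $R^{\mathrm{JH}}_\mathrm{fil}$ and $\rsf^\mathrm{ss}$ to an equality of schemes; this I expect to handle by noting that $R^{\mathrm{JH}}_\mathrm{fil} = \pi_\mathrm{rel}^{-1}(M^{\mathrm{JH}}_\mathrm{fil}(\dv))$ is defined as an open preimage (so the scheme structure is the induced reduced open subscheme structure inherited from $\rsf^\mathrm{ss}$), and that our hypothesis rules out any points in the complement. Once that identification is in hand, the rest of the argument is purely formal manipulation of the Cartesian diagram in Proposition \ref{prop-JH-immersion}.
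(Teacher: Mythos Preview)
Your proposal is correct and follows essentially the same approach as the paper's proof. The paper argues in two sentences: from Example \ref{ex:sink slope} one obtains $R^{\mathrm{JH}}_\mathrm{fil} = \rsf^\mathrm{ss}$, and the result then follows from Theorem \ref{main-theorem} and Proposition \ref{prop-JH-immersion}; your write-up simply unpacks these steps in more detail (and your worry about the scheme-theoretic equality is unnecessary, since $R^{\mathrm{JH}}_\mathrm{fil}$ is already an \emph{open} subscheme of $\rsf^\mathrm{ss}$ by Proposition \ref{prop-JH-immersion}, so equality of closed points over the algebraically closed field $\mathbb{K}$ gives equality of schemes immediately).
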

\begin{proof}
If $\mathrm{rk}$ is zero on torsion classes, then we have 
$R^{\mathrm{JH}}_\mathrm{fil} = \rsf^\mathrm{ss}$ (cf. Example \ref{ex:sink slope}).
The result now follows from Theorem \ref{main-theorem} and Proposition 
\ref{prop-JH-immersion}.
\end{proof}

\begin{remark}\leavevmode \rm{
\begin{enumerate}
\item If we consider the trivial quiver in place of $Q$, then the filtered moduli 
space $\msf(\dv) = \msc(\dv)$ is just a point for any increasing sequence of dimension 
vector $d_1 \leq d_2 \leq \ldots \leq d_\ell$. The filtered moduli space  $\msf(\dv)$ 
is empty for (non-increasing) other dimension vectors. This follows from the 
semi-simplicity of representations of $A_\ell$ quivers.
\item If $\ell = 1$, then we are in the situation of \cite{Ki94} and hence for any 
$\Theta\in \Gamma$, we get projective moduli spaces.

\item In general, the moduli spaces $\msf(\dv)$ may not be projective varieties.  
For certain choice of stability parameter $\Theta$ and dimension vector $\dv$, we 
may get projective moduli space of filtered representations of $Q$. In particular, 
if the open set $\rsf$ is same as $\rsc^\mathrm{ss}$, then the GIT quotient $\msf(\dv)$ 
is projective.

For example,  let us consider the quiver $S :=A_2 \times A_2$, i.e.,
\begin{equation*}
\quad \xymatrix@C=3.0pc{
v_1 \ar[r]^{\beta_1} \ar[d]_{\alpha_1} & v_2 \ar[d]^{\alpha_2} \\
w_1 \ar[r]_{\beta_2} & w_2
}
\end{equation*}
where the set of vertices $S_0 = \{v_1, v_2, w_1, w_2\}$.
Let us take $\dv = (1, 1, 1, 2)$ and $\Theta = (1, 0, 1, -1)$. With direct computations,
one can check that the representation $M$ of $S$ having dimension vector $\dv$ satisfying 
the relation $\alpha_2\beta_1 - \beta_2\alpha_1$ is $\Theta$-semistable if and only if 
$M(\beta_1)$ and $M(\beta_2)$ are injective maps. Hence, in this case, we have 
$\rsf = \rsc^\mathrm{ss}$.
\end{enumerate}

}
\end{remark}
\section{Determinantal theta functions}\label{sec-det-fns}
In this section, we assume that the characteristic of the field $\mathbb{K}$ is zero.
We will use results of \cite{DW1, DW2} to give an explicit embedding of moduli of 
filtered quiver representations in a projective space. 
First, we recall the definition of semi-invariants $\bar{c}^{N'}$ for quiver with 
relations.
Let $N'$ be an object in $\catchain$. Let 
$$
\tilde{P_1}\ra \tilde{P_0} \ra N' \ra 0
$$
be the minimal presentation of $N'$ in $\catchain$, see \cite[Section 2]{DW2} for 
more details.

For any object $M'$ in $\catchain$, the semi-invariant $\bar{c}^{N'}$ is defined by 
$\bar{c}^{N'}(M')$ to be the determinant of the matrix
$$
\Hom[\catchain]{\tilde{P_0}}{M'}\ra \Hom[\catchain]{\tilde{P_1}}{M'}
$$
whenever it is a square matrix.

Let $\rsc^1, \dots, \rsc^m$ be the irreducible components of $\rsc$. The semi-invariants
$\bar{c}^{N'}$ is defined on the components $\rsc^j$ of $\rsc$ for which 
$$
\dim \Hom[\catchain]{\tilde{P_0}}{M'} = \dim \Hom[\catchain]{\tilde{P_0}}{M'}
$$
while $\dim \Hom[\catchain]{N'}{M'} = 0$ for general $M'\in \rsc^j$.

In view of \cite[Theorem 1]{DW2}, we may work with the faithful components of $\rsc$. 
Recall that a component $\rsc^j$ of $\rsc$ is called \emph{faithful} if every element 
of $\mathbb{K}(\QAl)$ acting trivially on every module from $\rsc^j$ is in the admissible 
ideal $I$. Let $\rsc^\mathrm{ff}$ be the union of faithful components of $\rsc$.

\begin{proposition}\label{det-prop-1}
A representation $M_x$ corresponding to a point $x\in \rsc^\mathrm{ff}$ 
is $\mu_{\Theta, \mathrm{rk}}$-semistable if and only if there is an object 
$N'\in \catchain$ of projective dimension one with the minimal projective resolution 
\begin{equation}\label{eq-det-1}
0 \ra \tilde{P}_1 \stackrel{\gamma'}{\ra} \tilde{P}_0 \ra N'\ra 0,
\end{equation}
where $\tilde{P}_0 =  \bigoplus_{w\in (\QAl)_0} U_{0_w}\otimes P'_w$ and 
$\tilde{P}_1 =  \bigoplus_{w\in (\QAl)_0} U_{1_w}\otimes P'_w$ are projective modules 
in $\catchain$, such that the following holds:
\begin{enumerate}
\item\label{c1} For each $w\in (\QAl)_0$, we have 
$\dim U_{0_w} - \dim U_{1_w} = n(\Theta(\dv)r_w - \mathrm{rk}(\dv) \Theta_w)$ 
for some positive integer $n$.
\item The map
$$
\Hom[]{\gamma'}{M_x}\colon \Hom[]{P'_0}{M_x}\ra \Hom[]{P'_1}{M_x}
$$ is invertible, i.e. $\Theta_{\gamma'}(M_x):= \det \Hom[]{\gamma'}{M_x} \neq 0$.
\end{enumerate}
\end{proposition}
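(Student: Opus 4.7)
The plan is to reduce the statement to the fundamental theorem of Derksen--Weyman characterising $\theta$-semistable modules over a quiver with relations in terms of non-vanishing of the determinantal semi-invariants $\bar c^{N'}$, applied to the ladder quiver with admissible ideal on its faithful components. Concretely, I will first use the identification established in Section~\ref{sec-git-mc} between $\mu_{\Theta,\mathrm{rk}}$-semistability and $\theta$-semistability, where
\[
\theta_w \;=\; \Theta(\dv)\, r_w \;-\; \mathrm{rk}(\dv)\, \Theta_w
\]
for each $w \in (\QAl)_0$. Thus $M_x$ corresponding to $x \in \rsc^{\mathrm{ff}}$ is $\mu_{\Theta,\mathrm{rk}}$-semistable if and only if $\iota(M_x)$ is $\theta$-semistable in $\catchain$, which is the form in which the Derksen--Weyman results are phrased.

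Next, I will invoke \cite[Theorem 1]{DW2} on faithful components: a representation in $\rsc^{\mathrm{ff}}$ is $\theta$-semistable if and only if there exists some $N' \in \catchain$ of projective dimension one such that the semi-invariant $\bar c^{N'}$ is well-defined as a polynomial function of weight a positive multiple $n\theta$ of $\theta$ and does not vanish at $M_x$. To read off the numerical conditions, I write the minimal projective resolution \eqref{eq-det-1} with $\tilde P_i = \bigoplus_w U_{i_w} \otimes P'_w$, and use the defining property of the indecomposable projectives recalled before Section~\ref{sec-rank-fn}, namely $\Hom[\catchain]{P'_w}{M'} = M'(w)$. This gives
\[
\dim \Hom[\catchain]{\tilde P_i}{M'} \;=\; \sum_{w \in (\QAl)_0} \dim U_{i_w}\cdot \dv(M')_w,
\]
so the semi-invariant $\bar c^{N'}$ evaluates to the determinant of a square matrix exactly when $\sum_w(\dim U_{0_w} - \dim U_{1_w})\,\dv(M')_w = 0$ on the relevant component.

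The weight computation is the step I expect to require the most care. Using $\mathbb{K}^*$-equivariance of the Hom spaces under the action of $G$, the character by which $\bar c^{N'}$ transforms at the vertex $w$ is proportional to $\dim U_{0_w} - \dim U_{1_w}$; matching this with the required weight $n\theta$ yields precisely condition \eqref{c1}, namely
\[
\dim U_{0_w} - \dim U_{1_w} \;=\; n\bigl(\Theta(\dv)\, r_w - \mathrm{rk}(\dv)\,\Theta_w\bigr).
\]
The non-vanishing $\bar c^{N'}(M_x)\neq 0$ is by definition $\det \Hom{\gamma'}{M_x} \neq 0$, giving condition~(2).

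The main obstacle will be the careful bookkeeping of the weights of $\bar c^{N'}$ under the $G$-action together with ensuring that the minimal projective resolution has length one and that we stay on the faithful locus $\rsc^{\mathrm{ff}}$ — that is, checking that the Derksen--Weyman characterisation applies in our setting where $\catchain$ is defined by the commutativity relations $I$. The converse direction (from the existence of such $N'$ with the given numerical conditions to $\mu_{\Theta,\mathrm{rk}}$-semistability) is automatic from King's criterion, since the non-vanishing of a semi-invariant of weight a positive multiple of $\chi_\theta$ at $x$ is equivalent to $\chi_\theta$-semistability of $x$, which by Proposition~\ref{prop:semistab} is equivalent to $\mu_{\Theta,\mathrm{rk}}$-semistability of $M_x$.
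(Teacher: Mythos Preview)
Your approach is essentially the same as the paper's: both reduce to the Derksen--Weyman characterisation of $\theta$-semistability via determinantal semi-invariants on faithful components, with the paper making explicit the passage from \cite[Theorem~1]{DW1} (for the quiver $\QAl$ without relations) through \cite[Proposition~1]{DW2} to obtain $N' = N/IN$ of projective dimension one, whereas you invoke \cite[Theorem~1]{DW2} directly. One small correction: in the converse direction you cite Proposition~\ref{prop:semistab}, which concerns points of $\rsf$; since here $x \in \rsc^{\mathrm{ff}}$, the relevant equivalence between $\chi_\theta$-semistability and $\mu_{\Theta,\mathrm{rk}}$-semistability is the one recorded for $\rsc$ at the start of Section~\ref{subsec-git}.
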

\begin{proof}
Assume that $M_x$ is $\mu_{\Theta, \mathrm{rk}}$-semistable, i.e., the point 
$x$ is $\chi_{\theta}$-semistable, and hence there exist a semi-invariant $f$ with 
weight $\chi_{\theta}^n$ such that $f(x) \neq 0$. 
By \cite[Theorem 1]{DW1}, there exists a representation $N$ of $\QAl$ with the 
minimal projective resolution 
$$
P_1 \stackrel{\gamma}{\ra} P_0 \ra N\ra 0
$$
having the Euler form $\langle \dim M_x, \dim N\rangle = 0$ such that 
$c^{N}(M_x) \neq 0$. From the proof of \cite[Proposition 1]{DW2}, we have a minimal
projective presentation
$$
\tilde{P_1} \stackrel{\gamma'}{\ra} \tilde{P_0} \ra N'\ra 0,
$$
where $N' := N/IN$, and a determinant semi-invariant $\bar{c}^{N'}$ is the restriction 
of $c^{N}$ on the faithful component. By \cite[Theorem 1]{DW2}, the 
$\mathbb{K}(\QAl)/I$-module $N'$ is of projective dimension one. 

Since the determinant semi-invariant $c^N$ has weight $\chi_{\theta}^n$, it follows 
from \eqref{eq-chi-theta} that
$$
\dim U_{0_w} - \dim U_{1_w} = n(\Theta(\dv)r_w - \mathrm{rk}(\dv) \Theta_w)
$$
for each vertex $w\in (\QAl)_0$.

Conversely, assume that there is an object $N'\in \catchain$ of projective dimension 
one with the projective presentation \eqref{eq-det-1} satisfying the given conditions. 
Then, $\Theta_{\tilde{\gamma}}$ can be viewed as a semi-invariant of weight 
$\chi_{\theta}^n$. To see this, for given multiplicity vector spaces 
$\U= \{U_{0_w}, U_{1_w}\}_{w\in (\QAl)_0}$ satisfying (\ref{c1}), one can consider 
the associated character 
\begin{equation}\label{chi-U-defn}
\chi_U\big((g_w)_{w\in (\QAl)_0}\big) := 
\prod_{w\in (\QAl)_0} \det(g_w)^{(\dim U_{0_w} - \dim U_{1_w})} \,.
\end{equation}
This proves that $M_x$ is $\mu_{\Theta, \mathrm{rk}}$-semistable.
\end{proof}

\begin{remark}\rm{
In the above result, if we drop the condition on vector spaces $\U$ as in (\ref{c1}), 
then what we can conclude is that the point $x$ is $\chi_U$-semistable which, in general, 
may not correspond to the $\mu_{\Theta, \mathrm{rk}}$-semistability (see \eqref{eq-chi-theta}).
}
\end{remark}

\begin{cor}\label{det-cor-1}
A representation $M_x$ corresponding to a point $x\in \rsc^\mathrm{ff}\cap \rsf$ is 
$\mu_{\Theta, \mathrm{rk}}$-semistable if and only if  either 
there is an object $N''\in \catfilt$ 
of projective dimension one with a minimal projective resolution
\begin{equation}\label{eq-det-2}
0 \ra \tilde{P_1} \stackrel{\tilde{\gamma}}{\ra} \tilde{P_0} \ra N''\ra 0,
\end{equation}
\rm{or} there exists an epi-monic $\tilde{P_1} \stackrel{\tilde{\gamma}}{\ra} \tilde{P_0}$ 
in $\catfilt$; where $\tilde{P_0}$ and $\tilde{P_1}$ are projective objects in $\catfilt$ 
as in Proposition \ref{det-prop-1} (see Proposition \ref{filtered projective} for 
justification of this definition), such that the map
$$
\Hom[]{\tilde{\gamma}}{M_x}\colon \Hom[]{\tilde{P_0}}{M_x}\ra \Hom[]{\tilde{P_1}}{M_x}
$$ is invertible, i.e. $\Theta_{\tilde{\gamma}}(M_x):= \det \Hom[]{\tilde{\gamma}}{M_x} \neq 0$.
\end{cor}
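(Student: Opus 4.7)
The overall strategy is to transport the characterization of semistability in Proposition \ref{det-prop-1}, which lives in the abelian envelope $\catchain$, across the embedding $\iota\colon \catfilt\hookrightarrow \catchain$. Two structural facts make this plausible: first, by Proposition \ref{filtered projective}, projective objects on the two sides are identified under $\iota$, so the data $\tilde{P}_0,\tilde{P}_1$ and the map $\tilde{\gamma}$ can be viewed in either category interchangeably; second, since $\iota$ is fully faithful, the $\Hom$ spaces from the $\tilde{P}_i$ into $\iota(M_x)$ coincide with those computed in $\catfilt$, so the nondegeneracy condition $\det \Hom{\tilde{\gamma}}{M_x} \neq 0$ transfers verbatim between the two settings. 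The cotilting torsion decomposition \eqref{cotilting torsion extension} will serve as the bookkeeping device for the cases that distinguish genuine cokernels in $\catfilt$ from those in $\mathcal{T}$.

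For the reverse direction, which I would do first since it is cleaner, assume one of the two witnesses is present. If $N''\in\catfilt$ has projective dimension one with the given strict short exact sequence, applying $\iota$ (which preserves strict exactness) yields a short exact sequence $0\to\tilde{P}_1\to\tilde{P}_0\to\iota(N'')\to 0$ in $\catchain$; Proposition \ref{det-prop-1} then applies with $N'=\iota(N'')$. If instead $\tilde{\gamma}$ is an epi-monic in $\catfilt$, then viewing it in $\catchain$ gives an injection whose cokernel $N'$ lies in the torsion class $\mathcal{T}$ (since epimorphicity in $\catfilt$ forces surjectivity at the sink vertices, so $N'(\ell,v)=0$ for all $v$); this $N'$ still has projective dimension at most one by construction. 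In both cases the dimension vector and $\Hom$ conditions of Proposition \ref{det-prop-1} are satisfied, and that proposition concludes $\mu_{\Theta,\mathrm{rk}}$-semistability of $M_x$.

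For the forward direction, start with $M_x\in\catfilt$ semistable and invoke Proposition \ref{det-prop-1} to obtain $N'\in \catchain$ and its minimal projective resolution $0\to\tilde{P}_1\xrightarrow{\tilde{\gamma}}\tilde{P}_0\to N'\to 0$. By Proposition \ref{filtered projective}, each $\tilde{P}_i$ lifts canonically to a projective of $\catfilt$, and by Proposition \ref{prop: stable kernels}, $\tilde{\gamma}$ remains monic after restriction to $\catfilt$. Split according to the behaviour at the sink: if the sink components $\tilde{\gamma}_{(\ell,v)}$ are all isomorphisms, then the cokernel in $\catfilt$ vanishes and $\tilde{\gamma}$ is the desired epi-monic; otherwise, let $N''$ be the cokernel of $\tilde{\gamma}$ in $\catfilt$, so that the sequence $0\to\tilde{P}_1\to\tilde{P}_0\to N''\to 0$ is a candidate minimal resolution in $\catfilt$.

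The principal obstacle is to verify the strict exactness of this resulting sequence in the non-epic case, equivalently that $\tilde{\gamma}$ is a strict monic in $\catfilt$ whenever it is not epi-monic. The naive map $\tilde{\gamma}$ from Proposition \ref{det-prop-1} can in principle be monic but not strict monic (when the cokernel $N'$ in $\catchain$ is neither filtered nor torsion), and this intermediate regime is exactly what must be ruled out or repaired. I would attempt this in one of two ways: either argue that we may replace $N'$ by the filtered quotient $M$ appearing in its cotilting torsion sequence $0\to B_{\mathrm{tor}}\to N'\to M\to 0$ and adjust the projective presentation using $\kappa$ so that the new $\tilde{\gamma}$ becomes strict monic in $\catfilt$ without disturbing the $\Hom$ nondegeneracy against $M_x$; or, more directly, exploit the hypothesis $x\in \rsf$ together with the invertibility of $\Hom{\tilde{\gamma}}{M_x}$ to show that the non-strict intermediate case cannot co-exist with $M_x$ being filtered semistable, so that one of the two listed cases must apply. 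The first route is the cleaner conceptual approach, while the second may be shorter once the relevant dimension count is written out.
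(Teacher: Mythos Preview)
Your approach is broadly sound and reaches the same conclusion, but the paper's argument is considerably more direct. Rather than transporting the resolution by hand and then worrying about strict monicity, the paper applies the left adjoint $\kappa\colon\catchain\to\catfilt$ to the resolution \eqref{eq-det-1} from the outset. Since $\kappa$ is right exact and preserves projectives (its right adjoint $\iota$ being exact), and since $\kappa\circ\iota\simeq\id$ on the $\tilde{P}_i$, this yields immediately the sequence $\tilde{P}_1\xrightarrow{\tilde{\gamma}}\tilde{P}_0\to\kappa(N')\to 0$ in $\catfilt$ with $\tilde{\gamma}=\kappa(\gamma')$. The dichotomy is then simply $\kappa(N')\neq 0$ versus $\kappa(N')=0$, the latter giving the epi-monic case. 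Crucially, the adjunction isomorphism
\[
\Hom[\catfilt]{\kappa(\gamma')}{M_x}\;\cong\;\Hom[\catchain]{\gamma'}{\iota(M_x)}
\]
transfers the determinant condition in one line, replacing your appeal to full faithfulness of $\iota$.

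This dissolves what you call the ``principal obstacle'': there is no need to verify separately that $\tilde{\gamma}$ is strict monic, because the operative data is only the map $\tilde{\gamma}$ between projectives and its $\Hom$-nondegeneracy against $M_x$, both of which are delivered directly by $\kappa$ and the adjunction. Your route~(1) is essentially this argument; you should promote it from a candidate fix to the main line, and drop the case analysis on sink components and the speculative route~(2). Your treatment of the reverse direction is fine and in fact more explicit than the paper's, which leaves that implication implicit.
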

\begin{proof}
Recall that the inclusion functor $\iota \colon \catfilt \ra \catchain$ has a left 
adjoint
$$
\kappa \colon \catchain  \ra \catfilt
$$
(see \cite{SS16}) and being left adjoint $\kappa$ is a right exact functor. Since 
$\kappa$ has an exact right adjoint functor $\iota$, it carries projective objects 
to projective objects. In fact, the indecomposable projective modules $P'_w$ are 
projective objects in $\catfilt$ (see Proposition \ref{filtered projective}). 

Hence, if $\kappa(N')$ is a non-zero object in $\catfilt$, then the functor $\kappa$ 
takes any presentation of the form \ref{eq-det-1} to a presentation of the form 
\ref{eq-det-2}, where $N'' = \kappa(N')$.
i.e., $\tilde{\gamma} = \kappa(\gamma')$, for $\gamma'$ as in the Proposition 
\ref{det-prop-1}. In fact, we have the identification
$$
\Hom[\catfilt]{\kappa(\gamma')}{M} \cong \Hom[\catchain]{\gamma'}{\iota(M)}
$$
using the adjunction between $\iota$ and $\kappa$.
If $\kappa(N') = 0$, then we get an epi-monic 
$\tilde{P_1} \stackrel{\tilde{\gamma}}{\ra} \tilde{P_0}$ satisfying the 
required conditions.
\end{proof}

\begin{proposition}
There exists an ample line bundle $\lambda_\U(\dv)$ on the moduli space $\msc(\dv)$.
\end{proposition}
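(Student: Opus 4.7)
The plan is to construct $\lambda_\U(\dv)$ by descending a suitable $G$-linearized line bundle on $\rsc$ along the good quotient $\pi_{\mathrm{rel}}\colon \rsc^{\mathrm{ss}}\to \msc(\dv)$, and to deduce ampleness from the standard GIT $\mathrm{Proj}$ construction.

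First, I would take the trivial line bundle on $\rsc$ equipped with the $G(\QAl)$-linearization twisted by the character $\chi_\U$ of equation \eqref{chi-U-defn}. Condition (1) of Proposition \ref{det-prop-1} gives $\chi_\U = \chi_{\theta}^n$ for some positive integer $n$, which is trivial on $\Delta$ since $\chi_{\theta}$ is, so this linearization descends to a character of $G = G(\QAl)/\Delta$ and produces a $G$-equivariant line bundle $L_\U$ on $\rsc$. Its $G$-invariant sections (of suitable powers) include all the determinantal semi-invariants $\bar{c}^{N'}$ appearing in Proposition \ref{det-prop-1}, so that $L_\U$ cuts out precisely $\rsc^{\mathrm{ss}}$.

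Second, to descend $L_\U|_{\rsc^{\mathrm{ss}}}$ to $\msc(\dv)$, I would invoke Kempf's descent lemma. For any $x\in \rsc^{\mathrm{ss}}$ with closed $G$-orbit, the corresponding module $M_x \cong \bigoplus_i S_i^{\oplus m_i}$ is $\theta$-polystable in $\catchain$ with pairwise non-isomorphic $\theta$-stable summands $S_i$ satisfying $\theta(\dimv{d}_{S_i}) = 0$. Consequently the stabilizer $\prod_i \mathrm{GL}(m_i)/\mathbb{K}^* \subset G$ acts on the fiber of $L_\U$ at $x$ through the restriction of $\chi_{\theta}^n$, which is trivial by a direct character computation using $\theta(\dimv{d}_{S_i}) = 0$. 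Hence $L_\U$ descends to a line bundle $\lambda_\U(\dv)$ on $\msc(\dv)$.

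Third, ampleness follows at once from the construction of the GIT quotient: we have the identification
\[
\msc(\dv) \;=\; \mathrm{Proj}\bigoplus_{m\geq 0}\Gamma(\rsc, L_\U^{\otimes m})^G,
\]
and $\lambda_\U(\dv)$ is (a positive power of) the tautological $\mathcal{O}(1)$ on this $\mathrm{Proj}$, which is ample by construction. The main obstacle is the verification of Kempf's descent condition on stabilizers; while standard in the GIT of quiver representations, it requires unpacking the automorphism groups of $\theta$-polystable objects in $\catchain$ and checking that $\chi_{\theta}$ vanishes on each such stabilizer modulo the diagonal scalars.
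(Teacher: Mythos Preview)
Your proposal is correct and follows essentially the same route as the paper: linearize the trivial bundle on $\rsc$ by the character $\chi_\U=\chi_\theta^n$, apply Kempf's descent criterion by decomposing a closed-orbit point as a $\theta$-polystable module and checking that $\theta$ vanishes on each stable summand, and then read off ampleness from the GIT $\mathrm{Proj}$ description. The only cosmetic difference is that the paper first packages the same linearized bundle as the determinant line bundle $\lambda_\U(M)=(\det\bigoplus_w\Hom{U_{0_w}}{M(w)})^{-1}\otimes(\det\bigoplus_w\Hom{U_{1_w}}{M(w)})$ attached to any flat family $M$, thereby obtaining a formal line bundle on the moduli functor before specializing to the tautological family on $\rsc^{\mathrm{ss}}$; on the affine representation space this is exactly your character-twisted trivial bundle, and the stabilizer computation ($u_k=\Theta(\dv)\mathrm{rk}(\dv_k)-\Theta(\dv_k)\mathrm{rk}(\dv)=0$ by equality of slopes) is identical to yours.
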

\begin{proof} 
Let $M$ be a flat family of objects of $\catchain$ having dimension vector $\dv$ over 
a scheme $S$. 
For non-zero vector spaces $\U= \{U_{0_w}, U_{1_w}\}_{w\in (\QAl)_0}$ satisfying 
\begin{equation}\label{eq-U-condn}
\dim U_{0_w} - \dim U_{1_w} = (\Theta(\dv) r_w - \mathrm{rk} (\dv) \Theta_w)\,,
\end{equation}
we define a line bundle
$$
\lambda_\U(M) := \Big(\det \bigoplus_w 
\Hom[]{U_{0_w}}{M(w)} \Big)^{-1}\bigotimes \Big(\det \bigoplus_w 
\Hom[]{U_{1_w}}{M(w)} \Big)
$$
on $S$.
From \eqref{eq-U-condn}, we have
\begin{equation}\label{eq-theta-1}
\sum_{w\in (\QAl)_0}  (\dim U_{0_w} - \dim U_{1_w})d_w = 0.
\end{equation}

For any map
$\gamma' \colon P'_1 \ra P'_0\,,$
where $$P'_0 = \displaystyle \bigoplus_{w\in (\QAl)_0} U_{0_w}\otimes P'_w ~\mbox{and}~  
P'_1 = \displaystyle \bigoplus_{w\in (\QAl)_0} U_{1_w}\otimes P'_w\,,$$
we get a (global) section $\Theta_{\gamma'}(M):= \det \Hom[]{\gamma'}{M}$ of 
$\lambda_\U(M)$ on $S$.

If $M$ is a flat family of $\mu_{\Theta, \mathrm{rk}}$-semistable objects of 
$\catchain$ having dimension vector $\dv$ over a scheme $S$, then we get a formal 
line bundle on the moduli functor $\mfc(\dv)$.

Let $\mathbb{M}$ be a tautological family of $\mu_{\Theta, \mathrm{rk}}$-semistable 
objects of $\catchain$ on $\rsc^\mathrm{ss}$. Then by \eqref{eq-theta-1}, it follows 
that $\lambda_\U(\mathbb{M})$ is a $G$-linearized line bundle.

To conclude, using Kempf's descent criterion, that the $G$-linearized line bundle
$\lambda_\U(\mathbb{M})$ descends to a line bundle on the moduli space $\msc(\dv)$, 
we need to verify that for each point $x\in \rsc^\mathrm{ss}$ in closed orbit, the 
isotropy group of $x$ acts trivially on the fibre over $x$. To see this, let 
$x\in \rsc^\mathrm{ss}$ is in closed orbit.
Then, the corresponding module $M_x$ in $\catchain$ is 
$\mu_{\Theta, \mathrm{rk}}$-polystable, i.e.,
$$
M_x \cong \bigoplus_{k=1}^m L_k \otimes M_k\,,
$$
where $M_k$ are non-isomorphic $\mu_{\Theta, \mathrm{rk}}$-stable objects of dimension 
vector $\dv_k$ with the slope 
\begin{equation}\label{eq-slope-equal}
\frac{\Theta(\dv_k)}{\mathrm{rk}(\dv_k)} = \mu_{\Theta, \mathrm{rk}}(M_k) = 
\mu_{\Theta, \mathrm{rk}}(M)
= \frac{\Theta(\dv)}{\mathrm{rk}(\dv)}
\end{equation}
and $L_k$ are multiplicity vector spaces.
Note that
$$
\lambda_\U(M_x) \cong \bigotimes_{k=1}^m (\det L_k)^{u_k} \otimes \lambda_\U(M_k)^{\dim L_k}\,,
$$
where $$u_k = \sum_{w\in (\QAl)_0}  (\dim U_{0_w} - \dim U_{1_w})d_{k_v}.$$

Note that
\[
\begin{array}{ll}
\sum_{w\in (\QAl)_0}  (\dim U_{0_w} - \dim U_{1_w})d_{k_w}
& = \sum_{w\in (\QAl)_0} \big(\Theta(\dv)r_w - \mathrm{rk}(\dv) \Theta_w\big)d_{k_w} ~\quad \quad
(\mathrm{by}~~ \eqref{eq-U-condn} )\\
& \\
& = \Theta(\dv)\mathrm{rk}(\dv_k) - \Theta(\dv_k)\mathrm{rk}(\dv)\\
& \\
& = 0 ~\quad \quad
(\mathrm{by}~~ \eqref{eq-slope-equal} )\\
\end{array}
\]
Since $M_k$ are $\mu_{\Theta, \mathrm{rk}}$-stable, the isotropy group of $x$ of $G$ 
is isomorphic to $\prod_{k=1}^m \mathrm{GL}(L_k)$. Hence, from the above computations, 
it follows that the isotropy group acts trivially on the fibre over $x$.

Therefore, by Kempf's descent criterion, we get a line bundle $\lambda_\U(\dv)$ on the 
moduli space $\msc(\dv)$. 
Moreover, a global section $\Theta_{\gamma'}(\mathbb{M})$ being a $G$-invariant section 
of $\lambda_\U(\mathbb{M})$, it descends to a section $\Theta_{\gamma'}(\dv)$ of 
$\lambda_\U(\dv)$ (cf. \cite[Proposition 7.5]{AK07}).
Since $\lambda_\U(\mathbb{M})$ is a $G$-linearized line bundle which is used to 
construct the moduli space $\msc(\dv)$, it follows that the descended line bundle 
$\lambda_\U(\dv)$ is ample on $\msc(\dv)$. Moreover, we have
$$
H^0(\rsc, \lambda_\U(\mathbb{M}))^G \cong H^0(\msc(\dv), \lambda_\U(\dv))
$$
(cf. \cite[Proposition 7.6]{AK07}).
\end{proof}

Using \cite[Theorem 1]{DW1}, for sufficiently large dimensional vector spaces $\U$ 
satisfying \eqref{eq-theta-1}, we get finitely many maps $\gamma_0, \dots, \gamma_m$ 
as in the proof of Proposition \ref{det-prop-1} such that the corresponding map
$$
\Theta_{\gamma} \colon \ms(\dv) \ra \mathbb{P}^m
$$
is a closed scheme-theoretic embedding. 

Let $\msc(\dv)^\mathrm{ff}$ be the closed subscheme of $\msc(\dv)$ corresponding 
to $\rsc^\mathrm{ff}$. Then, we have the following result which follows from the 
similar arguments as in \cite[Theorem 7.8]{AK07}.

\begin{proposition}\label{det-prop-2}
For any dimension vector $\dv$, we can find finite-dimensional vector spaces 
$\U = \{U_{0_w}, U_{1_w}\}_{w\in (\QAl)_0}$ satisfying \eqref{eq-theta-1}
and finitely many maps $\gamma'_0, \dots, \gamma'_m$ as in Proposition 
\ref{det-prop-1} such that the map
$$
\Theta_{\gamma'} \colon \msc(\dv)^\mathrm{ff} \ra \mathbb{P}^m
$$
given by $[M]\mapsto (\Theta_{\gamma'_0}(M) : \cdots : \Theta_{\gamma'_N}(M))$ is 
scheme-theoretic closed embedding.
\end{proposition}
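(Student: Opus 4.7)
The plan is to realize $\Theta_{\gamma'}$ as the morphism associated to the complete linear system of a sufficiently high power of the ample line bundle $\lambda_\U(\dv)$ constructed in the previous proposition, restricted to the closed subscheme $\msc(\dv)^\mathrm{ff}$. Since $\lambda_\U(\dv)$ is ample on $\msc(\dv)$, for $n \gg 0$ the line bundle $\lambda_\U(\dv)^{\otimes n}$ is very ample, and the morphism to projective space defined by a basis of its global sections is a closed scheme-theoretic embedding; restricting to the closed subscheme $\msc(\dv)^\mathrm{ff}$ again yields a closed embedding.

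The key step is to identify the global sections of this very ample bundle with determinantal theta sections. Using the descent isomorphism
$$H^0\big(\msc(\dv), \lambda_\U(\dv)^{\otimes n}\big) \;\cong\; H^0\big(\rsc, \lambda_\U(\mathbb{M})^{\otimes n}\big)^G$$
established in the proof of the previous proposition, these sections correspond to $G$-semi-invariants on $\rsc$ of weight $\chi_\theta^n$. Restricting to the faithful components $\rsc^\mathrm{ff}$, the Derksen--Weyman theorem \cite[Theorem 1]{DW2} asserts that every such semi-invariant is a linear combination of determinantal semi-invariants $\bar{c}^{N'} = \Theta_{\gamma'}$, where $N' \in \catchain$ ranges over modules of projective dimension one with minimal resolution $0 \to \tilde{P}_1 \xrightarrow{\gamma'} \tilde{P}_0 \to N' \to 0$ whose multiplicity data satisfies condition \ref{c1} of Proposition \ref{det-prop-1} with this $n$. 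Since the ambient space of sections is finite-dimensional, we may extract finitely many $\gamma'_0, \dots, \gamma'_m$ whose associated $\Theta_{\gamma'_i}$ form a spanning set, and the induced morphism $\Theta_{\gamma'} \colon \msc(\dv)^\mathrm{ff} \to \mathbb{P}^m$ coincides (up to possibly adding extra redundant sections) with the restriction of the embedding given by the very ample linear system, hence is a closed scheme-theoretic embedding.

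The main obstacle is the spanning statement for $G$-semi-invariants: on the faithful locus and in weight $\chi_\theta^n$, every such semi-invariant must be expressible as a linear combination of determinantal semi-invariants coming from minimal projective resolutions. This is exactly the content of \cite[Theorem 1]{DW2}, which generalizes the path-algebra result of Derksen--Weyman to algebras with admissible relations and is what makes the determinantal construction sufficiently rich. Once this spanning statement is in hand, the remaining ingredients --- ampleness of $\lambda_\U(\dv)$, passage to a sufficiently high tensor power, choice of a finite basis of sections, and the identification with $\Theta_{\gamma'}$ --- are essentially formal and parallel the treatment in \cite[Theorem 7.8]{AK07}.
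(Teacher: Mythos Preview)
Your proposal is correct and takes essentially the same approach as the paper, which simply states that the result ``follows from the similar arguments as in \cite[Theorem 7.8]{AK07}''; you have spelled out what that argument is (ampleness of $\lambda_\U(\dv)$, passage to a high tensor power, descent of sections to $G$-semi-invariants, and the spanning theorem \cite[Theorem 1]{DW2} on faithful components). The paper additionally remarks, just after the proposition, that $\Theta_{\gamma'}$ can be seen as the restriction of the embedding $\Theta_\gamma\colon \ms(\dv)\to\mathbb{P}^m$ constructed via \cite[Theorem 1]{DW1} for the quiver without relations, using the closed embedding $\msc(\dv)\hookrightarrow\ms(\dv)$ from Proposition~\ref{prop: rel embedding}; this is a slightly different packaging of the same idea but not a different method.
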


We notice that this embedding $\Theta_{\gamma'}$ is precisely the restriction of 
$\Theta_{\gamma}$ (cf. Proposition \ref{det-prop-1}).
Let $M_{\mathrm{fil}}^\mathrm{JH}(\dv)^\mathrm{ff} := 
M_{\mathrm{fil}}^\mathrm{JH}(\dv) \cap \msc(\dv)^\mathrm{ff}$ 
(respectively, $\msf(\dv)^\mathrm{ff} := \msf(\dv) \cap \msc(\dv)^\mathrm{ff}$) . Then $M_{\mathrm{fil}}^\mathrm{JH}(\dv)^\mathrm{ff}$ and $\msf(\dv)^\mathrm{ff}$ are 
locally closed subschemes of  $\msc(\dv)$. 

\begin{cor}\label{det-cor-2}
For any dimension vector $\dv$, we can find finite dimensional vector spaces 
$\U = \{U_{0_w}, U_{1_w}\}_{w\in (\QAl)_0}$ 
satisfying \eqref{eq-theta-1} and finitely many maps 
$\tilde{\gamma_0}, \dots, \tilde{\gamma_m}$ as in 
Corollary \ref{det-cor-1} such that the map
$$
\Theta_{\tilde{\gamma}} \colon M_{\mathrm{fil}}^\mathrm{JH}(\dv)^\mathrm{ff} \ra \mathbb{P}^m
$$
given by 
$[M]\mapsto (\Theta_{\tilde{\gamma_0}}(M) : \cdots : \Theta_{\tilde{\gamma_m}}(M))$ 
is a scheme-theoretic locally closed embedding.
\end{cor}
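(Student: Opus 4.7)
The plan is to obtain this corollary as the restriction of Proposition \ref{det-prop-2} to the locally closed subscheme $M_{\mathrm{fil}}^\mathrm{JH}(\dv)^\mathrm{ff}$ of $\msc(\dv)$, using the adjunction between $\iota$ and $\kappa$ to identify the filtered semi-invariants with the restrictions of the ones on $\catchain$.

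First, I would record that the proposition preceding Proposition \ref{prop-JH-immersion} shows $\eta \colon M_{\mathrm{fil}}^\mathrm{JH}(\dv) \to \msc(\dv)$ to be an open immersion. Consequently $M_{\mathrm{fil}}^\mathrm{JH}(\dv)^\mathrm{ff} = M_{\mathrm{fil}}^\mathrm{JH}(\dv) \cap \msc(\dv)^\mathrm{ff}$ is open inside the closed subscheme $\msc(\dv)^\mathrm{ff}$, hence locally closed in $\msc(\dv)$. Next, apply Proposition \ref{det-prop-2} to the fixed $\dv$ to obtain vector spaces $\U = \{U_{0_w}, U_{1_w}\}_{w\in (\QAl)_0}$ satisfying \eqref{eq-theta-1} and finitely many maps $\gamma'_0,\ldots,\gamma'_m$ for which $\Theta_{\gamma'}\colon \msc(\dv)^\mathrm{ff}\to \mathbb{P}^m$ is a scheme-theoretic closed embedding. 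Its restriction to the open subscheme $M_{\mathrm{fil}}^\mathrm{JH}(\dv)^\mathrm{ff}$ is then automatically a locally closed embedding into $\mathbb{P}^m$.

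The key step is to identify this restriction with $\Theta_{\tilde{\gamma}}$ built from the filtered maps $\tilde{\gamma_i} := \kappa(\gamma'_i)$, where $\kappa\colon \catchain \to \catfilt$ is the left adjoint from \eqref{eq-kappa}. Since $\kappa$ sends projectives to projectives and satisfies $\kappa\circ\iota \simeq \id$, each $\tilde{\gamma_i}$ is a morphism of projective objects in $\catfilt$ of the shape required by Corollary \ref{det-cor-1}. For $M \in \catfilt(\mu)$, the adjunction isomorphism
$$
\Hom[\catfilt]{\kappa(\gamma'_i)}{M} \cong \Hom[\catchain]{\gamma'_i}{\iota(M)}
$$
(already used in the proof of Corollary \ref{det-cor-1}) is functorial in $M$, so on a flat family over $S$ it identifies the two determinant sections up to an isomorphism of the underlying free sheaves. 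Hence $\Theta_{\tilde{\gamma_i}}(M) = \Theta_{\gamma'_i}(\iota(M))$ as sections (up to a common unit), and a fortiori their vanishing loci coincide and the induced maps to $\mathbb{P}^m$ are equal.

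The main subtlety is to make sure the adjunction identification is compatible with the $G$-linearization and descent used to construct $\lambda_\U(\dv)$ in the previous proposition, so that $\Theta_{\tilde{\gamma_i}}(\dv)$ really is a well-defined section on $M_{\mathrm{fil}}^\mathrm{JH}(\dv)^\mathrm{ff}$ obtained by pulling back $\Theta_{\gamma'_i}(\dv)$ along the open immersion $\eta$. This is guaranteed by the functoriality of $\kappa$ (in particular, on tautological families) and by the naturality of the adjunction. Once this compatibility is in place, the scheme-theoretic locally closed embedding asserted in the statement is precisely the restriction of the closed embedding $\Theta_{\gamma'}$ of Proposition \ref{det-prop-2} along the open immersion $M_{\mathrm{fil}}^\mathrm{JH}(\dv)^\mathrm{ff} \hookrightarrow \msc(\dv)^\mathrm{ff}$, and the corollary follows.
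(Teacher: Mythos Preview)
Your proposal is correct and follows essentially the same approach as the paper: define $\tilde{\gamma_i}=\kappa(\gamma'_i)$, use the adjunction $\Hom[\catfilt]{\kappa(\gamma'_i)}{M}\cong\Hom[\catchain]{\gamma'_i}{\iota(M)}$ to obtain $\eta^*\Theta_{\gamma'_i}=\Theta_{\tilde{\gamma_i}}$, and then compose the open immersion $\eta$ with the closed embedding $\Theta_{\gamma'}$ from Proposition~\ref{det-prop-2}. Your write-up is in fact more explicit than the paper's about why the adjunction identification is compatible with the $G$-linearization and descent, but the underlying argument is the same.
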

\begin{proof}
There is an ample line bundle $\tilde{\lambda}_U(\dv) = \eta^*\lambda_U(\dv)$ on the 
moduli space $M_{\mathrm{fil}}^\mathrm{JH}(\dv)$, and for $\tilde{\gamma_i} = \kappa(\gamma_i)$, 
we have 
$$
\eta^*\Theta_{\gamma_i} = \Theta_{\tilde{\gamma_i}}
$$
Now using Corollary \ref{det-cor-1}, the result follows by combining the embedding 
$\eta \colon M_{\mathrm{fil}}^\mathrm{JH}(\dv)^\mathrm{ff} \ra \msc(\dv)^\mathrm{ff}$ 
and $$\Theta_\gamma \colon \msc(\dv)^\mathrm{ff} \ra \mathbb{P}^m \, .$$ 
\end{proof}

In particular if we choose suitable slope function we get following result.

\begin{cor}\label{det-cor-3}
For any dimension vector $\dv$ and a slope function $\mu_s$ as in Example 
\ref{ex:sink slope}, we can find finite dimensional vector spaces 
$\U = \{U_{0_w}, U_{1_w}\}_{w\in (\QAl)_0}$ satisfying \eqref{eq-theta-1} and finitely 
many maps $\tilde{\gamma_0}, \dots, \tilde{\gamma_m}$ as in Corollary \ref{det-cor-1} 
such that the map
$$
\Theta_{\tilde{\gamma}} \colon \msf(\dv)^\mathrm{ff} \ra \mathbb{P}^m
$$
given by $[M]\mapsto (\Theta_{\tilde{\gamma_0}}(M) : \cdots : \Theta_{\tilde{\gamma_m}}(M))$ 
is a scheme-theoretic locally closed embedding.
\end{cor}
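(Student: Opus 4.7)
The plan is to derive this corollary as a direct specialisation of Corollary \ref{det-cor-2}, combined with the open immersion supplied by Corollary \ref{cor-open-immersion}. No genuinely new geometry is needed; the content is the reduction that identifies $\msf(\dv)$ with an open subscheme of $\msc(\dv)$.

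First I would verify that the slope function $\mu_s$ of Example \ref{ex:sink slope} meets the hypothesis of Corollary \ref{cor-open-immersion}, namely that $\mathrm{rk}_s$ vanishes on the torsion subcategory $\mathcal{T}$. From the explicit characterisation of $\mathcal{T}$ (objects $B \in \catchain$ with $B(\ell , v) = 0$ for every $v \in Q_0$) and the definition $\mathrm{rk}_s(B) = \sum_{v \in Q_0} r_v \dim B(\ell , v)$ with $r_v > 0$, this vanishing is immediate; Example \ref{ex:sink slope} already records that $\mathrm{rk}_s(B) = 0$ if and only if $B \in \mathcal{T}$. Applying Corollary \ref{cor-open-immersion}, the canonical morphism $\phi \colon \msf(\dv) \to \msc(\dv)$ is an open immersion. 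Chasing through the proof of that corollary we also obtain $R^{\mathrm{JH}}_\mathrm{fil} = \rsf^{\mathrm{ss}}$, so in the diagram of Proposition \ref{prop-JH-immersion} the open immersion $\sigma$ is an isomorphism. Consequently $M_{\mathrm{fil}}^{\mathrm{JH}}(\dv) = \msf(\dv)$ as open subschemes of $\msc(\dv)$, and intersecting with the faithful locus $\msc(\dv)^{\mathrm{ff}}$ gives the identification $\msf(\dv)^{\mathrm{ff}} = M_{\mathrm{fil}}^{\mathrm{JH}}(\dv)^{\mathrm{ff}}$.

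With this identification in hand, I invoke Corollary \ref{det-cor-2} on $M_{\mathrm{fil}}^{\mathrm{JH}}(\dv)^{\mathrm{ff}}$ to produce finite-dimensional vector spaces $\U = \{U_{0_w}, U_{1_w}\}_{w \in (\QAl)_0}$ satisfying \eqref{eq-theta-1}, together with maps $\tilde{\gamma_0}, \dots, \tilde{\gamma_m}$ built from Corollary \ref{det-cor-1}, such that $\Theta_{\tilde\gamma} \colon M_{\mathrm{fil}}^{\mathrm{JH}}(\dv)^{\mathrm{ff}} \to \mathbb{P}^m$ is a scheme-theoretic locally closed embedding. Transporting this along the identification $\msf(\dv)^{\mathrm{ff}} = M_{\mathrm{fil}}^{\mathrm{JH}}(\dv)^{\mathrm{ff}}$, the same tuple of semi-invariants exhibits $\msf(\dv)^{\mathrm{ff}}$ as a locally closed subscheme of $\mathbb{P}^m$ via $[M] \mapsto (\Theta_{\tilde{\gamma_0}}(M) : \cdots : \Theta_{\tilde{\gamma_m}}(M))$. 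The only point requiring any care is the torsion-vanishing check for $\mu_s$, which opens the door to Corollary \ref{cor-open-immersion}; once that reduction is in place the statement is a tautological reindexing of Corollary \ref{det-cor-2}.
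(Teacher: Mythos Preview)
Your proposal is correct and matches the paper's own approach: the paper presents this corollary with the lead-in ``In particular if we choose suitable slope function we get following result'' and gives no separate proof, treating it exactly as you do---a specialisation of Corollary \ref{det-cor-2} via the identification $\msf(\dv) = M_{\mathrm{fil}}^{\mathrm{JH}}(\dv)$ furnished by Corollary \ref{cor-open-immersion} for the rank function $\mathrm{rk}_s$.
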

\begin{remark}\rm{
For the components which are not faithful, we can have the similar statements as in 
the Proposition \ref{det-prop-2} and the Corollary \ref{det-cor-2}, \ref{det-cor-3}
by further introducing extra relations. More precisely, the projective modules in 
presentations \eqref{eq-det-1} and \eqref{eq-det-2} will change up to extra relations.
}
\end{remark}

\subsection*{Acknowledgement} The first-named author would like to acknowledge the 
support of SERB-DST (India) grant number YSS/2015/001182 and MTR/2018/000475. The 
second name author would like to acknowledge the support of DAE and DST INSPIRE grant number 
IFA 13-MA-25. We also thank HRI and IIT Gandhinagar for providing excellent infrastructure.
%
%
%-----------------------------------------------------------------------------------------------------------%
%
%

%-----------------------------------------------------------------------------------------------------------%
\end{document}